\let\oldnl\nl
\newcommand{\nonl}{\renewcommand{\nl}{\let\nl\oldnl}}
\newcommand{\adrien}[1]{{\textcolor{red}{#1}}}
\renewcommand{\adrien}[1]{}
\newcommand{\martin}[1]{{\textcolor{blue}{#1}}}
\renewcommand{\martin}[1]{}
\newcommand{\hbb}[1]{\ensuremath{\mathbb{#1}}}
\newcommand{\Ai}{\hbb{A}}
\newcommand{\Ci}{\hbb{C}}
\newcommand{\Ki}{\hbb{K}}
\newcommand{\Ni}{\hbb{N}}
\newcommand{\Qi}{\hbb{Q}}
\newcommand{\Ri}{\hbb{R}}
\newcommand{\Li}{\hbb{L}}
\newcommand{\hcal}[1]{\ensuremath{\mathcal{#1}}}
\newcommand{\Bc}{\hcal{B}}
\newcommand{\Nc}{\hcal{N}}
\newcommand{\Oc}{\hcal{O}}
\newcommand{\Ut}{\tilde U}
\theoremstyle{plain}
\newtheorem{thm}{Theorem}
\newtheorem{prop}{Proposition}
\newtheorem{cor}{Corollary}
\newtheorem{lem}{Lemma}
\theoremstyle{definition}
\newtheorem{dfn}{Definition}
\newtheorem{xmp}{Example}
\theoremstyle{remark}
\newtheorem{rem}{Remark}
\newcommand{\Edata}{\texttt{EdgeData}}
\newcommand{\AbhyankarMoh}{\texttt{AbhyankarTest}}
\newcommand{\irreducible}{\texttt{Irreducible}}
\newcommand{\Edgepoly}{\texttt{BoundaryPol}}
\newcommand{\NPA}{\texttt{ARNP}}
\newcommand{\PIrr}{\texttt{Quasi-Irreducible}}
\newcommand{\PDegenerated}{\texttt{Quasi-Degenerated}}
\newcommand{\AppRoot}{\texttt{AppRoot}}
\newcommand{\Expand}{\texttt{Expand}}
\newcommand{\Primitive}{\texttt{Primitive}}
\renewcommand{\O}{\textrm{\Oc}}
\newcommand{\Ot}{\O\tilde\,\,}
\newcommand{\M}{\textup{\textsf{M}}}
\newcommand{\dy}{{d}}
\newcommand{\dx}{{n}}
\newcommand{\val}[1][x]{\ensuremath{v_{#1}}}
\newcommand{\D}{\textrm{Data}}
\newcommand{\coef}{\textrm{coeff}}
\newcommand{\Res}{\textrm{Res}}
\newcommand{\Card}{\textrm{Card}}
\newcommand{\Conv}{\textrm{Conv}}
\newcommand{\vF}[1][]{{\ensuremath{\delta_{#1}}}}
\newcommand{\edgepoly}{boundary polynomial}
\newcommand{\True}{\texttt{True}}
\newcommand{\False}{\texttt{False}}
\newcommand{\Char}{\textrm{Char}}
\newcommand{\NP}{\Nc}
\newcommand{\algclos}[1]{\overline{#1}}
\newcommand{\tronc}[2]{{\left\lceil #1 \right\rceil}^{#2}}
\newcommand{\tc}[2][y]{\mbox{tc}_{#1}\left(#2\right)}
\begin{document}

\title{A quasi-linear irreducibility test in $\Ki[[x]][y]$}

\date{}
\author[1]{Adrien POTEAUX}
\author[2]{Martin WEIMANN}
 
\affil[1]{University of Lille, France}
\affil[2]{University of French Polynesia, France}

\maketitle

\begin{abstract}
  We provide an irreducibility test in the ring $\Ki[[x]][y]$ whose
  complexity is quasi-linear with respect to the discriminant
  valuation, assuming the input polynomial $F$ square-free and $\Ki$ a
  perfect field of characteristic zero or greater than $\deg(F)$. The
  algorithm uses the theory of approximate roots and may be seen as a
  generalisation of Abhyankhar's irreducibility criterion to the case
  of non algebraically closed residue fields.
\end{abstract}

\section{Introduction}
Factorisation of polynomials defined over a ring of formal power
series is an important issue of symbolic computation, with a view towards singularities of algebraic plane curves. In this paper, we develop a fast irreducibility test. In all
of the sequel, we assume that $F\in\Ki[[x]][y]$ is a square-free
Weierstrass polynomial defined over a perfect field $\Ki$ of
characteristic $0$ or greater than $\dy=\deg(F)$.  We let $\vF$ stand
for the $x$-valuation of the discriminant of $F$. We prove:
\begin{thm}\label{thm:main}
  We can test if $F$ is irreducible in $\Ki[[x]][y]$ with an expected
  $\Ot(\vF)$ operations over $\Ki$ and one univariate irreducibility
  test over $\Ki$ of degree at most $\dy$.
\end{thm}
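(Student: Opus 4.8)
\emph{Proof idea.} The algorithm is the recursive procedure \PIrr{} built from the generalised Abhyankar criterion of the previous sections; the content of the proof is to show it meets the announced cost. The plan has three parts: the algorithm and why it is correct, the bound on the number of recursive levels and on the degree of the single univariate test, and the complexity analysis — which is where the work is.

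\emph{Algorithm and correctness.} At the $i$-th level we hold a monic approximate root $\psi_i\in\Ki[[x]][y]$ (with $\psi_0=y$), the valuation data accumulated so far, a running ramification $e$, and a presentation $\Ki[t]/(\rho_i)$ of the tentative residue ring, with $\deg\rho_i\mid\dy$. We compute the $\psi_i$-adic expansion $F=\sum_k c_k(x)\,\psi_i^k$ by \Expand{}, then its Newton polygon $\NP$, its slopes and its \edgepoly{}s by \Edata{} and \Edgepoly{}, reading the coefficients in $\Ki[t]/(\rho_i)$. If $\NP$ has more than one edge, or some \edgepoly{} is not a power of its squarefree part, then $F$ is reducible — a Hensel step (\onestep) exhibits a proper factor — and we return \False. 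Otherwise there is one edge of slope $-p_i/q_i$ and the \edgepoly{} is $\phi^{\ell_i}$ with $\phi$ its squarefree part; following the criterion we update the uniformizer by the appropriate rational transformation (which absorbs the root of $\phi$ whenever $\phi$ is linear, so that the residue ring only grows when $\phi$ has degree $\ge 2$), multiply $e$ by $q_i$, adjoin the extension by $\phi$ to the tentative residue tower, and collapse that tower to a single $\Ki[t]/(\rho_{i+1})$ ($\deg\rho_{i+1}\mid\dy$) by the Las Vegas routine \Primitive{}. If the running product $e\cdot\deg\rho_{i+1}$ has reached $\dy$ and $\ell_i=1$, then by the criterion $F$ is irreducible in $\Ki[[x]][y]$ iff $\Ki[t]/(\rho_{i+1})$ is a field, so we return the answer of one univariate irreducibility test of $\rho_{i+1}$ over $\Ki$, of degree at most $\dy$, and stop. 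Otherwise $\ell_i\ge 2$, we compute by \AppRoot{} the next approximate root $\psi_{i+1}$ — the unique monic polynomial with $\deg_y\!\big(F-(\psi_{i+1})^{\,n}\big)<\dy-\deg_y\psi_{i+1}$ for the appropriate $n=\dy/\deg_y\psi_{i+1}$ — and recurse. Correctness is exactly the criterion proved above: no irreducibility test occurs except the terminal one, since every intermediate \edgepoly{} is only tested for being a power of its squarefree part (a gcd plus a repeated-squaring check), and the arithmetic in the tentative rings is Las Vegas, so that a zero divisor — which can only arise if $\rho_i$ is reducible, hence $F$ is reducible — is detected and reported.

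\emph{Number of levels and degree of the test.} The degrees $\deg_y\psi_i$ strictly increase, each by a factor at least $2$, so there are $g=O(\log\dy)$ levels; the product $e\cdot\deg\rho_i$ divides $\dy$ throughout, so the single univariate test has degree at most $\dy$. The word ``expected'' in the statement comes from the Las Vegas subroutines, chiefly \Primitive{} and the fast arithmetic in $\Ki[t]/(\rho_i)$.

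\emph{Complexity and the main obstacle.} At level $i$ it suffices to know $F$ modulo $x^{N_i}$, where $N_i$ is the precision needed to determine the $i$-th edge of $\NP$ and its \edgepoly{}; nothing at that level depends on $F$ beyond this precision. Each of \Expand{}, \AppRoot{}, \Edata{}, \Edgepoly{}, \Primitive{} then runs in $\Ot$ of the size of the truncated level-$i$ data, using fast (relaxed) power-series arithmetic, a Newton-iteration $n$-th root extraction for \AppRoot{}, the fast Euclidean algorithm for the pure-power tests, and fast arithmetic in $\Ki[t]/(\rho_i)$. Since $g=O(\log\dy)$, it is enough to bound the cost of a single level by $\Ot(\vF)$; for this one must show the relevant level-$i$ data size is $\Ot(\vF)$. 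This rests on the classical factorisation of the discriminant: if $F=\prod_k F_k$ over $\algclos{\Ki}[[x]]$ then
\[
  \val(\disc(F))\;=\;\sum_k \val(\disc(F_k))\;+\;2\sum_{k<l}\val\!\big(\res[y](F_k,F_l)\big),
\]
and both the self-terms (a semigroup / $\delta$-invariant sum) and the pairwise contact terms dominate the successive precisions $N_i$; when $F$ is reducible the procedure stops at the first edge or \edgepoly{} that does not fit, using no more precision, and the pairwise-contact part of $\vF$ already pays for it. The delicate points, in decreasing order of difficulty, are: (i) the bound $\dy\,N_i=\Ot(\vF)$ at every level — equivalently, that the precision devoted to recovering the $i$-th characteristic datum is dominated by the $i$-th summand of a semigroup-type decomposition of $\vF$ — together with the matching lower bound on $\val(\disc(F))$; (ii) computing $\psi_{i+1}$ in time quasi-linear in the \emph{needed} precision only, since an $n$-th root extraction done bluntly costs $\Ot(nN_i)$, which may overshoot the budget, so one must lift $\psi_{i+1}$ from the previous level using that $F$ is already an $n$-th power modulo the earlier truncation; (iii) propagating the truncations coherently through the tower arithmetic and \Primitive{} without precision blow-up. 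Everything else is routine fast polynomial and power-series arithmetic.
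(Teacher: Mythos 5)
Your plan follows the same route as the paper's proof: the quasi-irreducibility algorithm \PIrr{} based on approximate roots, $\Psi$-adic expansions and \edgepoly{}s, square-freeness of the residual polynomials in place of irreducibility, a Las Vegas primitive-element collapse of the residue tower, $O(\log\dy)$ levels since each step divides $N$ by $q\ell\ge 2$, and a single terminal univariate irreducibility test of degree $f\le\dy$ (Lemma \ref{lem:quasiIrrvsIrr}). However, there is a genuine gap: the quantitative heart of the theorem --- precisely the items you yourself label ``delicate points'' --- is asserted, not proved. Your point (i), that $\dy$ times the needed $x$-precision is $\Ot(\vF)$ at every level, is exactly Proposition \ref{prop:eta} of the paper ($\eta(F)\le 2\vF/\dy$), which rests on the explicit recursion of Lemma \ref{lem:etaF} ($\eta_k=\eta_{k-1}+N_k m_{k+1}/e_{k+1}$) and on the results of \cite{PoWe17} relating this quantity to $\min_i\val(\partial_y F(y_i))\le\vF/\dy$; your appeal to the discriminant decomposition into self-terms and contact terms is a plausible route, but you give no argument that these terms dominate the successive precisions, and without such a statement the claim ``each level costs $\Ot$ of the truncated data size'' says nothing about $\vF$. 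You also omit the companion bound $(\dy-1)f_k\le\vF$ (Lemma \ref{lem:d-delta}, including the subtlety that it still holds for reducible $F$ as long as \False{} has not been returned), which is what pays for the residue-ring arithmetic: the primitive-element step costs $\O(f_k^{(\omega+1)/2})$ (Proposition \ref{prop:Prim}) and \Edgepoly{} costs $\Ot(\vF+f_k^2)$ (Proposition \ref{prop:computeHbark}), both superlinear in $f_k$, so they are not covered by a generic ``quasi-linear in the data size'' claim. Finally, since $\vF$ is not known in advance, one must justify how the precision is chosen; the paper does this by the restart/doubling test of lines \ref{Pirr:etaprim}--\ref{Pirr:callback} (Remark \ref{rem:precision}), at the price of a logarithmic factor --- your passing mention of relaxed arithmetic points in the right direction but is likewise unargued.

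Your point (ii) is moreover a misdiagnosis: no cross-level lifting of approximate roots is needed. Proposition \ref{prop:approx} computes the $N$-th approximate root directly by Newton iteration on the reciprocal polynomial over $\Ai=\Ki[x]/(x^{\eta+1})$ in $\M(\dy)$ operations over $\Ai$, i.e. $\Ot(\dy\,\eta)\subseteq\Ot(\vF)$ once the precision bound is available; the real reason approximate roots are used at all (instead of the minimal polynomials $\phi_k$ produced by Newton--Puiseux transforms) is that the transformed polynomials may have size $\Omega(\dy\,\vF)$, as in Example \ref{xmp:sharp}. In short: the algorithmic skeleton and correctness argument coincide with the paper's, but the proof of the $\Ot(\vF)$ bound is missing exactly where the work lies, namely the precision bound $\eta(F)\le 2\vF/\dy$, the residue-degree bound $(\dy-1)f\le\vF$, and their combination in the per-iteration accounting of Proposition \ref{prop:fastIrr}.
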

If $F$ is irreducible, the algorithm computes also its discriminant
valuation $\vF$, its index of ramification $e$ and its residual degree
$f$. As usual, the notation $\Ot()$ hides logarithmic factors ; see
Section \ref{ssec:model} for details. Up to our knowledge, this
improves the best current complexity $\Ot(\dy\,\vF)$ \cite[Section
3]{PoWe17}.

Our algorithm is Las Vegas, due to the computation of primitive
elements\footnote{One should get a deterministic complexity
  $\O(\vF^{1+o(1)}\log^{1+o(1)}(\dy))$ thanks to the recent preprint
  \cite{HoLe18}.} in the residue field extensions. In particular, if
we test the irreducibility of $F$ in $\algclos{\Ki}[[x]][y]$, it
becomes deterministic without univariate irreducibility test.  The
algorithm extends to non Weierstrass polynomials, but with complexity
$\Ot(\vF+\dy)$ and at most two univariate irreducibility tests. If
$F\in \Ki[x,y]$ is given as a square-free bivariate polynomial of
bidegree $(\dx,\dy)$, we have $\vF< 2 \dx \dy$, hence our algorithm is
quasi-linear with respect to the arithmetic size $nd$ of the input
$F$. Moreover, we can avoid the square-free hypothesis in this
case. These extended results are discussed in Subsection
\ref{ssec:comments}.

\paragraph{Main ideas.} We recursively compute some well chosen
approximate roots $\psi_0,\ldots,\psi_g$ of $F$, starting with
$\psi_0$ the $d^{th}$ approximate roots of $F$. At step $k+1$, we
build the $(\psi_0,\ldots,\psi_k)$-adic expansion of $F$. We compute
an induced generalised Newton polygon of $F$ and check if it is
straigth. If not, then $F$ is reducible and the algorithm
stops. Otherwise, we construct a related \edgepoly{}
(quasi-homogeneous and defined over some field extenion of $\Ki$) and
test if it is the power of some irreducible polynomial. If not, then
$F$ is reducible and the algorithm stops. Otherwise, we deduce the
degree of the next approximate root $\psi_{k+1}$. The degrees of the
$\psi_k$'s are strictly increasing and $F$ is irreducible if and only
if we reach $\psi_g=F$. In order to perform a unique univariate
irreducibility test over $\Ki$, we rely on dynamic evaluation and
rather check if the \edgepoly{}s are powers of a square-free
polynomial.

\paragraph{Related results.} 
Factorisation of polynomials defined over a ring of formal power
series is an important issue in the algorithmic of algebraic curves,
both for local aspects (classification of plane curves singularities)
and for global aspects (integral basis of function fields \cite{vH94},
geometric genus of plane curves \cite{PoWe17}, bivariate factorisation
\cite{We16}, etc.) Probably the most classical approach for factoring
polynomials in $\Ki[[x]][y]$ is derived from the Newton-Puiseux
algorithm, as a combination of blow-ups (monomial transforms and
shifts) and Hensel liftings. This approach allows moreover to compute
the roots of $F$ - represented as fractional Puiseux series - up to an
arbitrary precision. The Newton-Puiseux algorithm has been studied by
many authors (see e.g. \cite{Du89, DeDiDu85, Wa00, Te90, Po08, PoRy11,
  PoRy12, PoRy15, PoWe17} and the references therein). Up to our
knowledge, the best current arithmetic complexity was obtained in
\cite{PoWe17}, using a divide and conquer strategy leading to a fast
Newton-Puiseux algorithm (hence an irreducibility test) which computes
the singular parts of all Puiseux series above $x=0$ in an expected
$\Ot(\dy\,\vF)$ operations over $\Ki$. There exists also other methods
for factorisation, as the Montes algorithm which allow to factor
polynomials over general local fields \cite{Mo99,GuMoNa12} with no
assumptions on the characteristic of the residue field. Similarly to
the algorithms we present in this paper, Montes et al. compute higher
order Newton polygons and \edgepoly{}s from the $\Phi$-adic expansion
of $F$, where $\Phi$ is a sequence of some well chosen polynomials
which is updated at each step of the algorithm. With our notations,
this leads to an irreducibility test in $\Ot(\dy^2+\vF^2)$
\cite[Corollary 5.10 p.163]{BaNaSt13} when $\Ki$ is a ``small enough''
finite field\footnote{This restriction on the field $\Ki$ is due to
  the univariate factorisation complexity. It could probably be
  avoided by using dynamic evaluation.}. In particular, their work
provide a complete description of \emph{augmented valuations},
apparently rediscovering the one of MacLane
\cite{Ma36a,Ma36b,Ru14}. The closest related result to this topic is
the work of Abhyanhar \cite{Ab89}, which provides a new irreducibility
test in $\Ci[[x]][y]$ based on approximate roots, generalised to
algebraically closed residue fields of arbitrary characteristic in
\cite{CoMo03}. No complexity estimates have been made up to our
knowledge, but we will prove that Abhyanhar's irreducibility criterion
is $\Ot(\vF)$ when $F$ is Weierstrass. In this paper, we extend this
result to non algebraically closed residue field $\Ki[[x]][y]$ of
characteristic zero or big enough. In some sense, our approach
establishes a bridge between the Newton-Puiseux algorithm, the Montes
algorithm and Abhyankar's irreducibility criterion.  Let us mention
also \cite{GaGw10,GaGw12} where an other irreducibility criterion in
$\algclos{\Ki}[[x]][y]$ is given in terms of the Newton polygon of the
discriminant curve of $F$, without complexity estimates.

\paragraph{Organisation.} In Section \ref{sec:ARNP}, we recall results
of \cite{PoRy15,PoWe17}, namely an improved version of the rational
Newton-Puiseux algorithm of Duval \cite{Du89}. From this algorithm we
fix several notations and define a collection $\Phi$ of minimal
polynomials of some truncated Puiseux series of $F$. We then show in
Section \ref{sec:phi} how to recover the edge data of $F$ from its
$\Phi$-adic expansion. In Section \ref{sec:psi}, we show that $\Phi$
can be replaced by a collection $\Psi$ of well chosen approximate
roots of $F$, which can be computed in the aimed complexity bound.
Section \ref{sec:comp} is dedicated to complexity issues and to the
proof of Theorem \ref{thm:main} ; in particular, we delay discussions
on truncations of powers of $x$ to this section. Finally, we give in
Section \ref{sec:absolute} a new proof of Abhyankhar's absolute
irreducibility criterion.


\section{A Newton-Puiseux type algorithm}\label{sec:ARNP}

\subsection{Classical definitions}\label{ssec:npalgo}
Let $F = \sum_{i=0}^\dy a_i(x)\,y^i=\sum_{i,j} a_{ij}x^j y^i \in
\Ki[[x]][y]$ be a Weierstrass polynomial, that is
$a_d=1$ and $a_i(0)=0$ for $i<d$ (the general case will be considered
in Section \ref{ssec:comments}). We let $\val$ stand for the usual
$x$-valuation of $\Ki[[x]]$.

\begin{dfn}\label{dnf:NP}
  The \textit{Newton polygon} of $F$ is the lower convex hull $\NP(F)$
  of the set of points $(i,\val(a_i))$ for $i=0,\ldots,\dy$.
\end{dfn}

It is well known that if $F$ is irreducible, then $\NP(F)$ is straight
(a single point being straight by convention). However, this condition
is not sufficient.

\begin{dfn}\label{dfn:edgePol}
  We call $\bar{F}:=\sum_{(i,j)\in \NP(F)} a_{ij} x^j y^i$ the
  \emph{\edgepoly{}} of $F$.
\end{dfn}
  
\begin{dfn}\label{dfn:degeneracy}
  We say that $F$ is \emph{degenerated} over $\Ki$ if its \edgepoly{}
  $\bar{F}$ is the power of an irreducible quasi-homogeneous
  polynomial.
\end{dfn}

In other words, $F$ is degenerated if and only if $\NP(F)$ is straight of
slope $-m/q$ with $q,m$ coprime, $q>0$, and if
\begin{equation}\label{eq:quasihom}
  \bar{F}=c \left(P\left( \frac{y^q}{x^{m}} \right) \,
    x^{m\deg(P)}\right)^N
\end{equation}
with $c\in \Ki^\times$, $N\in \Ni$ and $P\in \Ki[Z]$ monic and
irreducible.  We call $P$ the \emph{residual polynomial} of $F$. We
call the tuple $(q,m,P,N)$ the \emph{edge data} of the degenerated
polynomial $F$ and denote \Edata{} an algorithm computing this tuple.

\subsection{A Newton-Puiseux type irreducibility test}\label{ssec:Nksequence}
We can associate to $F$ a sequence of Weierstrass polynomials
$H_0,\ldots,H_g$ of strictly decreasing degrees $N_0,\ldots,N_g$ such
that either $N_g=1$ and $F$ is irreducible, either $H_g$ is not
degenerated and $F$ is reducible.

$\bullet $ \textbf{Rank $k=0$}. Let $N_0=\dy$ and $\Ki_0=\Ki$. We
define $c_0(x):=-\coef(F,y^{N_0-1})/N_0$ and let
\[
H_0(x,y):=F(x,y+c_0(x))\in \Ki_0[[x]][y].
\]
Then $H_0$ is a new Weierstrass polynomial of degree $N_0$ with no
terms of degree $N_0-1$. If $N_0=1$ or $H_0$ is not degenerated, we
let $g=0$.

$\bullet $ \textbf{Rank $k>0$}. Suppose given $\Ki_{k-1}$ a field
extension of $\Ki$ and $H_{k-1}\in \Ki_{k-1}[[x]][y]$ a degenerated
Weierstrass polynomial of degree $N_{k-1}$, with no terms of degree
$N_{k-1}-1$. Denote $(q_k,m_k,P_k,N_k)$ its edge data and
$\ell_k=\deg(P_k)$. We let $z_k$ stands for the residue class of $Z_k$
in the field $\Ki_k:=\Ki_{k-1}[Z_k]/(P_k(Z_k))$. We define $(s_k,t_k)$
to be the unique positive integers such that $q_ks_k-t_k m_k=1$,
$0\le t_k < q_k$. As $H_{k-1}$ is degenerated, we deduce from
\eqref{eq:quasihom} that
\begin{equation}\label{eq:Gk}
  H_{k-1}(z_k^{t_k} x^{q_k},x^{m_k}(y+z_k^{s_k}))=x^{q_km_k\ell_k N_k} \, G_k\, V_k,
\end{equation}
where $V_k\in \Ki_k[[x,y]]$ is a unit and $G_k\in \Ki_k[[x]][y]$ is a
Weierstrass polynomial of degree $N_k$ which can be computed up to an
arbitrary precision via Hensel lifting. We let
$c_k:=-\coef(G_k,y^{N_k-1})/N_{k}$ and define
\begin{equation}\label{eq:Hk}
  H_k(x,y)=G_k(x,y+c_k(x))\in \Ki_k[[x]][y].
\end{equation}
It is a degree $N_k$ Weierstrass polynomial with no terms of degree
$N_k-1$.

$\bullet$ \textbf{The $N_k$-sequence stops.} We have the relations
$N_k=q_k \ell_k N_{k-1}$. As $H_{k-1}$ is degenerated with no terms of
degree $N_{k-1}-1$, we must have $q_k \ell_k>1$. Hence the sequence of
integers $N_0,\ldots,N_k$ is strictly decreasing and there exists a
smallest index $g$ such that either $N_g=1$ and $H_g=y$ or $N_g>1$ and
$H_g$ is not degenerated. We collect the edge data of the polynomials
$H_0,\ldots,H_{g-1}$ in a list
\[
  \D(F):=\big((q_1,m_1,P_1,N_1),\ldots,(q_{g},m_{g},P_{g},N_{g})\big).
\]
Note that $m_{k}>0$ for all $1\leq k \le g$.

\begin{prop}\label{prop:NPA}
  The polynomial $F$ is irreducible if and only if $N_g=1$.
\end{prop}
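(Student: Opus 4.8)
The claim is that the $N_k$-sequence stops at $N_g = 1$ precisely when $F$ is irreducible. This is a statement about how the Newton–Puiseux process tracks irreducibility, and the natural strategy is induction on $g$, with each inductive step governed by the substitution \eqref{eq:Gk}–\eqref{eq:Hk}.

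First I would isolate the single-step bookkeeping. Write $\pi_k$ for the composed transformation $H_{k-1}(x,y)\mapsto G_k(x,y)$ given by the monomial substitution $x\mapsto z_k^{t_k}x^{q_k}$, $y\mapsto x^{m_k}(y+z_k^{s_k})$, clearing the unit factor $x^{q_km_k\ell_kN_k}V_k$ in \eqref{eq:Gk}, and then the Tschirnhaus shift \eqref{eq:Hk}. The key algebraic fact I would establish — or invoke from the recalled Newton–Puiseux theory of Section \ref{sec:ARNP} — is that $\pi_k$ induces a degree-preserving (up to the factor $q_k\ell_k$) bijection between irreducible factors of $H_{k-1}$ over $\Ki_{k-1}[[x]]$ and irreducible factors of $G_k$ over $\Ki_k[[x]]$, in particular that $H_{k-1}$ is irreducible over $\algclos{\Ki_{k-1}}[[x]]$ iff $H_k$ is irreducible over $\algclos{\Ki_k}[[x]]$ (and similarly without closures, once one accounts for how $P_k$ splits $\Ki_{k-1}$). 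The geometric content here is that passing to the residual polynomial $P_k$ and performing the $(q_k,s_k,t_k)$-blowup resolves exactly one "characteristic" layer of the singularity; since $\bar H_{k-1}$ is, by the degeneracy hypothesis, the $N_k$-th power of an irreducible quasi-homogeneous polynomial, no splitting of factors can be hidden at this stage.

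With the single-step statement in hand, the proposition follows by a short two-way argument. If $N_g = 1$, then $H_g = y$ is (trivially) irreducible, and unwinding the bijections through $g$ steps shows $H_0$, hence $F$, is irreducible: each $\pi_k$ reflects irreducibility back, and the initial shift $H_0(x,y)=F(x,y+c_0(x))$ is an isomorphism of $\Ki[[x]][y]$. Conversely, suppose the sequence stops with $N_g > 1$ because $H_g$ is not degenerated. Then either $\NP(H_g)$ is not straight — in which case the classical theory says $H_g$ factors according to the distinct slopes — or $\NP(H_g)$ is straight but its \edgepoly{} $\bar H_g$ is not the power of a single irreducible quasi-homogeneous polynomial, so $\bar H_g$ has at least two coprime quasi-homogeneous factors (or a residual polynomial that is not a prime power), and a Hensel-type argument lifts this to a nontrivial factorisation of $H_g$ over $\Ki_g[[x]]$, hence over $\algclos{\Ki_g}[[x]]$. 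Pulling this factorisation back through the $\pi_k$'s yields a nontrivial factorisation of $F$. One must only check the degenerate edge case $H_g$ of degree $N_g>1$ with no $y^{N_g-1}$ term cannot be irreducible while non-degenerated — but that is exactly the dichotomy just used.

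The main obstacle I expect is the single-step bijection on factors — making precise that the substitution \eqref{eq:Gk} neither creates nor destroys irreducible factors, and tracking the residue-field extension $\Ki_{k-1}\subset\Ki_k$ correctly so that "irreducible over $\Ki_{k-1}[[x]]$" corresponds to "irreducible over $\Ki_k[[x]]$" on the nose. This is really the statement that the rational Newton–Puiseux algorithm (Duval's version, as improved in \cite{PoRy15,PoWe17}) computes a complete and non-redundant set of Puiseux expansions, and the cleanest route is to cite that correctness rather than re-derive it; the proposition is then essentially a repackaging of the known termination-and-correctness of that algorithm in the language of the $H_k$-sequence. The remaining steps (the Tschirnhaus shifts are isomorphisms, the Hensel lift in the non-degenerated case) are routine.
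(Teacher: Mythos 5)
Your proposal is correct and follows essentially the same route as the paper, which disposes of the proposition in one line by citing the correctness of Duval's rational Newton--Puiseux algorithm (based on the transform \eqref{eq:Gk}) together with the Abhyankar shift \eqref{eq:Hk} of \cite{PoRy15}; your single-step factor correspondence, the Hensel-lifting argument in the non-degenerated case, and your final appeal to Duval's theorem are exactly the content hidden behind that citation. No gap to report.
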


\begin{proof}
  Follows from the rational Puiseux algorithm of Duval \cite{Du89}
  (which is based on the transform \eqref{eq:Gk}) combined with the
  ``Abhyankhar's trick'' \eqref{eq:Hk} introduced in
  \cite{PoRy15}.
\end{proof}

Following \cite{PoWe17}, we denote by \NPA{} the underlying
algorithm. By considering suitable sharp truncation bounds, it is
shown in \cite[Section 3]{PoWe17} that this algorithm performs an
expected $\Ot(\dy\,\vF)$ arithmetic operations (this requires
algorithmic tricks, especially dynamic evaluation and primitive
representation of residue fields). Unfortunately, the worst case
complexity of this algorithm is $\Omega(\dy\,\vF)$, which is too high
for our purpose. The main reason is that computing the intermediate
polynomials $G_k$ in \eqref{eq:Gk} via Hensel lifting up to sufficient
precision requires to compute
$H_{k-1}(z_k^{t_k} x^{q_k},x^{m_k}(y+z_k^{s_k}))$, that might have a
size $\Omega(\dy\,\vF)$, as shows the following example.
\begin{xmp}\label{xmp:sharp}
  Consider $F=(y^\alpha-x^2)^2+x^\alpha\in\Ci[[x]][y]$ with $\alpha>4$
  odd. We have $\dy=2\,\alpha$, $\vF=2\,\alpha^2-4\,\alpha+4$, $H_0=F$
  and $q_1, m_1, z_1$ are respectively $\alpha, 2$ and $1$. Applying
  results of \cite[Section 3]{PoWe17}, one can show that an optimal
  truncation bound to compute $G_1$ is $\alpha^2-4\,\alpha+1$. But the
  size of
  $H_0(x^\alpha, x^2\,(y+1))/x^{4\,\alpha} \mod
  x^{\alpha^2-4\,\alpha+1}$ is $\Theta(\alpha^3)=\Theta(\dy\,\vF)$.
\end{xmp}
To solve this problem we will rather compute the \edgepoly{}
$\bar{H}_k$ using the $(\psi_0,\ldots,\psi_k)$-adic expansions of $F$,
where the $\psi_k$'s are well chosen approximate roots. As a first
step towards the proof of this result, we begin by using a sequence
$(\phi_0,\ldots,\phi_k)$ of minimal polynomials of $F$ that we now
define.

\subsection{Minimal polynomials of truncated rational Puiseux
  expansions}

\paragraph{Rational Puiseux Expansions.} We keep notations of Section
\ref{ssec:Nksequence}. We denote $\pi_0(x,y)=(x,y+c_0(x))$ and define
inductively $\pi_k=\pi_{k-1}\circ \sigma_k$ where
\begin{equation}\label{eq:tausigma}
  \sigma_k(x,y):=(z_k^{t_k} x^{q_k},x^{m_k}(y+z_k^{s_k}+c_k(x)))
\end{equation}
for $k\ge 1$. It follows from \eqref{eq:Gk} and \eqref{eq:Hk} that
there exists $v_k(F)\in \Ni$ such that
\begin{equation}\label{eq:pikHk}
  \pi_k^* F = x^{v_k(F)}\, H_k\, U_k \in \Ki_k[[x]][y],
\end{equation}
with $U_k (0, 0) \in \Ki_k^\times$. This key point will be used
several time in the sequel.

We deduce from \eqref{eq:tausigma} that
\begin{equation}\label{eq:pikxy}
  \pi_k (x, y ) = (\mu_k x^{e_k},\alpha_k x^{r_k} y + S_{k} (x)),
\end{equation}
where $e_k:=q_1\cdots q_k$ (the ramification index discovered so far),
$\mu_k,\alpha_k \in \Ki_k^{\times}$, $r_k\in \Ni$ and
$S_k \in \Ki_k[[x]]$ satisfies $\val(S_k)\le r_k$.  Following
\cite{PoWe17}, we call the pair
\[
  \pi_k(x,0)=(\mu_k x^{e_k}, S_k(x))
\]
a (truncated) rational Puiseux parametrisation. This provides the
roots of $F$ (namely Puiseux series) truncated up to precision
$\frac{r_k}{e_k}$, that increases with $k$ \cite[Section 3.2]{PoWe17}.

\paragraph{Minimal polynomials.} It can be shown that the exponent
$e_k$ is coprime with the gcd of the support of $S_k$, and that the
coefficients of the parametrisation $(\mu_k x^{e_k}, S_k)$ generate
the current residue field extension $\Ki_k$ over $\Ki$ (see
e.g. \cite[Theorems 3 and 4]{Du89}).  It follows that there exists a unic
monic irreducible polynomial $\phi_k\in \Ki[[x]][y]$ such that
\begin{equation}\label{eq:phik}
  \phi_k (\mu_k x^{e_k}, S_{k}(x)) = 0 \textrm{ and } d_k:=\deg(\phi_k)=e_k f_k,
\end{equation}
where $f_k:=[\Ki_k:\Ki]=\ell_1\cdots \ell_k$. We call $\phi_k$ the
\emph{$k^{th}$ minimal polynomial} of $F$. Note that $\phi_0=y-c_0(x)$
and that we have the relations $\dy=N_k\,d_k$ for $k=0,\ldots,g$.

By construction, a function call $\NPA(\phi_k)$ generates the same
transformations $\pi_i$ for $i\le k$. In particular, we have
\[
  \D(\phi_k)=\left((q_1,m_1,P_1,N'_1),\ldots,(q_k,m_k,P_k,N_k'=1)\right)
  \text{ with } N'_i:=N_i/N_k.
\]


\section{Edge data from the $\Phi$-adic expansion}
\label{sec:phi}

Let us fix an integer $0\le k \le g$ and assume that $N_k>1$. We keep
using notations of Section \ref{sec:ARNP}. Assuming that we know the
edge data $(q_1,m_1,P_1,N_1),\ldots,(q_k,m_k,P_k,N_k)$ of the
Weierstrass polynomials $H_0,\ldots, H_{k-1}$, together with the
minimal polynomials $\phi_0,\dots,\phi_k$, we want to compute the
\edgepoly{} of the next Weierstrass polynomial $H_k$.  In the
following, we will omit for readibility the index $k$ for the sets
$\Phi$, $\Bc$, $V$ and $\Lambda$ defined below.

\subsection{Main results}\label{ssec:phi-main}

\paragraph{$\Phi$-adic expansion.} We denote $\phi_{-1}:=x$ and let
$\Phi=(\phi_{-1},\phi_0,\ldots,\phi_{k})$. Let
\begin{equation}\label{eq:Bc}
  \Bc := \{(b_{-1} ,\ldots,b_k)\in \Ni^{k+2} \,\,, \,
  \, b_{i-1}<q_i\,\ell_i\,, i=1,\ldots,k\}
\end{equation}
and denote $\Phi^B:=\prod_{i=-1}^k\phi_i^{b_i}$. Thanks to the
relations $\deg(\phi_i) = \deg(\phi_{i-1})q_i \ell_i$ for all
$1 \le i \le k$, an induction argument shows that $F$ admits a unique
expansion
\[
  F = \sum_{B\in \Bc} f_B \Phi^B, \quad f_B\in \Ki.
\]
We call it the $\Phi$-adic expansion of $F$. We have $b_k\leq N_k$
while we do not impose any \textit{a priori} condition to the powers
of $\phi_{-1}=x$ in this expansion. The aim of this section is to show
that one can compute $\bar{H}_k$ from the $\Phi$-adic expansion of
$F$.

\paragraph{Newton polygon.}
Consider the semi-group homomorphism
\[
  \begin{array}{rcl}
    v_k:\ (\Ki[[x]][y],\times)&\to& (\Ni\cup\{\infty\},+)\\%
    H & \mapsto & v_k (H) := \val (\pi_k^* H),
  \end{array}
\]
From \eqref{eq:pikxy}, we deduce that the pull-back morphism $\pi_k^*$
is injective, so that $v_k$ defines a discrete valuation. This is a
valuation of transcendence degree one, thus an augmented valuation
\cite[Section 4.2]{Ru14}, in the flavour of MacLane valuations
\cite{Ma36a,Ma36b,Ru14} or Montes valuations
\cite{Mo99,GuMoNa12}. Note that $v_0(H)=\val(H)$. We associate to $\Phi$ the vector
\[
  V := (v_k(\phi_{-1}), \ldots, v_k(\phi_k)),
\]
so that $v_k(\Phi^B)=\langle B , V\rangle$, where
$\langle \,,\,\rangle$ stands for the usual scalar product. For all
$i\in \Ni$, we define the integer
\begin{equation}\label{eq:wj}
  w_i := \min \left\{\langle B, V\rangle, \,\, b_k=i,\,\,f_B \ne 0\right\}-v_k(F)
\end{equation}
with convention $w_i := \infty$ if the minimum is taken over the empty
set. 

\begin{thm}
  \label{thm:NPphi}
  The Newton polygon of $H_k$ is the lower convex hull of
  $(i,w_i)_{0\leq i\leq N_k}$.
\end{thm}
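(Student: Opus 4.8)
The plan is to pull back $F$ along $\pi_k$ and track how the $\Phi$-adic expansion transforms. By \eqref{eq:pikHk} we have $\pi_k^*F = x^{v_k(F)} H_k U_k$ with $U_k(0,0)\in\Ki_k^\times$, so the Newton polygon of $H_k$ coincides with that of $\pi_k^*F / x^{v_k(F)}$ in the appropriate range of $y$-exponents (multiplying by the unit $U_k$ does not change the Newton polygon since $\NP(H_k)$ already touches the vertical axis region we care about, and $U_k$ contributes only higher-order terms). So the first step is to reduce the statement to computing $\NP(\pi_k^*F)$ up to the shift by $x^{v_k(F)}$.

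Next I would push the expansion $F = \sum_{B\in\Bc} f_B\,\Phi^B$ through $\pi_k^*$ term by term: $\pi_k^*F = \sum_B f_B\,\prod_i (\pi_k^*\phi_i)^{b_i}$. The key input is the valuation-theoretic behaviour of each factor. For $i<k$ one uses that $\phi_i$ is the $i^{th}$ minimal polynomial, so by \eqref{eq:pikHk} applied at level $i$ (combined with $\pi_k = \pi_i\circ(\text{later }\sigma\text{'s})$) the pullback $\pi_k^*\phi_i$ is $x^{v_k(\phi_i)}$ times a unit — here I'd invoke that $\D(\phi_i)$ terminates with $N'=1$, i.e. $H_i$ for the polynomial $\phi_i$ equals $y$, so no further Newton polygon phenomena occur and $\pi_k^*\phi_i$ genuinely has a single monomial in $x$ up to a unit. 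For $i=k$ the factor $\pi_k^*\phi_k$ is not purely a power of $x$: since $\phi_k$ has $H_k$-analogue equal to $y$ only if $N_k=1$, but here $N_k>1$, so $\pi_k^*\phi_k = x^{v_k(\phi_k)}(\text{unit})\cdot(\text{something involving }y)$. Actually the cleanest route is to treat $b_k$ as the distinguished exponent: the monomial $\Phi^B$ with $b_k = i$ contributes, after pullback and division by $x^{v_k(F)}$, a term whose lowest $x$-power is exactly $\langle B,V\rangle - v_k(F)$ and whose $y$-degree is governed by $i$ (because $\phi_k$ pulls back, after the Abhyankar shift, to $y$ times a unit plus $x$-stuff — this is precisely what \eqref{eq:pikHk} says for the polynomial $\phi_k$ itself, whose $N$-sequence has one more step). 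Collecting over all $B$ with fixed $b_k=i$, the minimal $x$-exponent appearing with a nonzero contribution to the coefficient of $y^i$ in $H_k$ is $w_i$ as defined in \eqref{eq:wj}.

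The main obstacle, and the step needing real care, is showing there is no cancellation: a priori, several multi-indices $B$ with the same $b_k=i$ could have $\langle B,V\rangle$ equal to the minimum $w_i + v_k(F)$, and their leading terms (after pullback) might sum to zero, making the true valuation of the $y^i$-coefficient of $H_k$ strictly larger than $w_i$. I would rule this out by using that $v_k$ is a discrete valuation of transcendence degree one (an augmented/MacLane valuation, as noted after the definition of $v_k$): the $\Phi$-adic expansion is, by the relations $\deg\phi_i = \deg\phi_{i-1}\,q_i\ell_i$ and the bounds $b_{i-1}<q_i\ell_i$, a ``canonical form'' for this valuation, meaning the values $\langle B,V\rangle$ for distinct $B$ attaining a given total value correspond to residues in the graded ring that are $\Ki$-linearly independent — hence $\sum f_B(\text{leading term})$ vanishes only if each $f_B$ with minimal $\langle B,V\rangle$ vanishes. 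Equivalently, one argues directly from the uniqueness of the $\Phi$-adic expansion together with the structure \eqref{eq:pikxy} of $\pi_k$, checking that the map $B\mapsto$ (leading monomial of $\pi_k^*\Phi^B$) is injective on each fiber $\{b_k=i\}$ modulo the part absorbed into $v_k(F)$. Once non-cancellation is established, the identification of $\NP(H_k)$ with the lower convex hull of $(i,w_i)_{0\le i\le N_k}$ is immediate, the range $0\le i\le N_k$ coming from $\deg_y H_k = N_k$ and the bound $b_k\le N_k$ on the $\Phi$-adic expansion.
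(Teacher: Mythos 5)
Your overall route is the same as the paper's: reduce to $\pi_k^*F/x^{v_k(F)}$ via \eqref{eq:pikHk}, push the $\Phi$-adic expansion through $\pi_k^*$, use that $\pi_k^*\phi_i$ is a power of $x$ times a unit for $i<k$ and $x^{v_{k,k}}\,y$ times a unit for $i=k$ (Lemma \ref{lem:PikPhi}), group the terms with $b_k=i$ and read off $w_i$. One smaller caveat: for $i=k$ you describe the pullback of $\phi_k$ as ``$y$ times a unit plus $x$-stuff''; the argument needs the exact form $x^{v_{k,k}}\,y\,U_{k,k}$ with no residual $y+\beta(x)$ term, which is obtained from the defining property \eqref{eq:phik} of the minimal polynomial (this is precisely how Lemma \ref{lem:PikPhi} kills $\beta$); otherwise the powers $(y+\beta)^i$ leak into lower $y$-degrees and the clean identification of the coefficient of $y^i$ with the group $b_k=i$ breaks down.

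The genuine gap is the non-cancellation step, which you rightly single out as the crux but do not actually prove. Your first suggested argument --- that $B\mapsto$ (leading monomial of $\pi_k^*\Phi^B$) is injective on each fiber $\{b_k=i\}$ --- is not the right statement: every $B\in\Bc(i,w)$ contributes the \emph{same} monomial $x^{w}y^i$, only with a different coefficient $\Lambda^B\in\Ki_k$, and the uniqueness of the $\Phi$-adic expansion says nothing about these coefficients; what must be shown is that the family $(\Lambda^B)_{B\in\Bc(i,w)}$ is $\Ki$-linearly independent. Your second suggestion --- invoking the augmented/MacLane formalism to assert that the corresponding residues are independent --- is essentially circular in this setting: the statement that the concretely defined valuation $v_k=\val\circ\pi_k^*$ is computed as a minimum over the $\Phi$-adic expansion (equivalently, that the leading coefficients cannot cancel) is exactly what Theorem \ref{thm:NPphi} and Corollary \ref{cor:main} establish, and using the MacLane theory instead would require proving that the $\phi_i$ are key polynomials for this particular valuation, which is not easier than the direct claim. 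The paper closes this gap with Proposition \ref{prop:key}, proved by induction on $k$: writing $\Lambda^B=\mu_B z_k^{N_B}$ with $q_kN_B=b_{k-1}+t_k w$, the constraint $0\le b_{k-1}<q_k\ell_k$ confines $N_B$ to $\ell_k$ consecutive values, and the independence of $1,z_k,\ldots,z_k^{\ell_k-1}$ over $\Ki_{k-1}$ (the minimal polynomial $P_k$ has degree $\ell_k$) reduces any linear relation to rank $k-1$. Without this argument (or an equivalent one) you cannot exclude that the valuation of the $y^i$-coefficient of $H_k$ is strictly larger than $w_i$, so the identification of $\NP(H_k)$ with the lower convex hull of the $(i,w_i)$ remains unproven.
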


This result leads us to introduce the sets
\[
  \Bc(i):=\{B\in\Bc ; b_k=i\} \textrm{ and } \Bc(i,w) := \{B \in
  \Bc(i) \ | \ \, \langle B, V \rangle = w\}
\]
for all $i\in\Ni$ and all
$w\in \Ni\cup\{\infty\}$, with convention $\Bc(i,\infty)=\emptyset$.

\paragraph{Boundary polynomial.} Consider the semi-group homomorphism
\[
  \begin{array}{rcl}
    \lambda_k:\ (\Ki[[x]][y ],\times)&\to& (\Ki_k,\times)\\%
    H & \mapsto & \lambda_k (H) := \tc{\left(\frac{\pi_k^{*}
                  (H)}{x^{v_k(H)}}\right)_{|x=0}}
  \end{array}
\]
with convention $\lambda_k(0)=0$, and where $\textrm{tc}_y$ stands for
the trailing coefficient with respect to $y$. We associate to $\Phi$
the vector
\[
  \Lambda := (\lambda_k (\phi_{-1}),\ldots,\lambda_k(\phi_k))
\]
and denote
$\Lambda^B:=\prod_{i=-1}^k \lambda_k(\phi_i)^{b_i}=\lambda_k(\Phi^B)$.
Note that $\Lambda^B\in \Ki_k$ is non zero for all $B$. We obtain the
following result:

\begin{thm}\label{thm:EdgePoly} Let $B_0 := (0,\ldots,0, N_k)$. The
  \edgepoly{} $\bar H_k$ of $H_k$ equals
  \begin{equation}\label{eq:barHk}
    \bar{H}_k =\sum_{(i,w_i)\in \NP(H_k)} \left(\sum_{B\in \Bc(i,w_i+v_k(F))}f_B \Lambda^{B-B_0} \right)x^{w_i} y^i.
  \end{equation}
\end{thm}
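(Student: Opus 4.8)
The plan is to combine Theorem~\ref{thm:NPphi}, which identifies the Newton polygon of $H_k$ with the lower hull of $(i,w_i)$, with a careful bookkeeping of the transform $\pi_k^*$ applied to the $\Phi$-adic expansion of $F$. First I would recall from \eqref{eq:pikHk} that $\pi_k^* F = x^{v_k(F)} H_k U_k$ with $U_k(0,0)\in\Ki_k^\times$, so that modulo higher powers of $x$ the polynomial $\bar H_k$ is read off from the lowest-$x$-degree part of $\pi_k^* F / x^{v_k(F)}$, edge by edge along $\NP(H_k)$. The point is that the unit $U_k$ does not affect the boundary polynomial once we restrict to the Newton polygon: along each vertex $(i,w_i)\in\NP(H_k)$ the coefficient of $x^{w_i}y^i$ in $\bar H_k$ equals the coefficient of $x^{w_i+v_k(F)}y^i$ in $\pi_k^* F$ divided by $U_k(0,0)$; and $B_0=(0,\dots,0,N_k)$ is chosen precisely so that $\Lambda^{B_0}=\lambda_k(\phi_k^{N_k})=U_k(0,0)$ up to the relevant normalisation, which will absorb that denominator. (I'd verify $\lambda_k(\phi_k)^{N_k}=U_k(0,0)$ using $\pi_k^*\phi_k = x^{v_k(\phi_k)}H_k^{(\phi_k)}U_k^{(\phi_k)}$ from \eqref{eq:pikHk} applied to $\phi_k$, together with $\deg_y H_k^{(\phi_k)}=N'_k=1$ so that $\phi_k$ plays the role of the ``last'' distinguished polynomial.)

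Next I would push $\pi_k^*$ through the sum $F=\sum_{B\in\Bc}f_B\Phi^B$ term by term. Since $\pi_k^*$ is a ring homomorphism, $\pi_k^*(\Phi^B)=\prod_i (\pi_k^*\phi_i)^{b_i}$, and by definition of $v_k$ and $\lambda_k$ one has $v_k(\Phi^B)=\langle B,V\rangle$ and the trailing-in-$y$, $x=0$ evaluation of $\pi_k^*(\Phi^B)/x^{\langle B,V\rangle}$ equals $\Lambda^B$. The subtlety is that $\pi_k^*(\Phi^B)$ is not homogeneous in $x$; it contributes to $x$-degree exactly $\langle B,V\rangle$ its leading term $\Lambda^B y^{(\text{something})}$ and to strictly higher $x$-degrees a tail. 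So when I collect, for fixed $i$, all $B\in\Bc(i)$, the contributions to the lowest $x$-degree for that value of $y^i$ in $\pi_k^*F$ come precisely from those $B$ minimising $\langle B,V\rangle$ subject to $b_k=i$ and $f_B\neq 0$ — i.e. from $\Bc(i,w_i+v_k(F))$ — and their aggregated coefficient is $\sum_{B\in\Bc(i,w_i+v_k(F))} f_B\Lambda^B$. Dividing by $U_k(0,0)=\Lambda^{B_0}$ turns $\Lambda^B$ into $\Lambda^{B-B_0}$, giving exactly the inner sum of \eqref{eq:barHk}. I then restrict to $(i,w_i)\in\NP(H_k)$ because, by Theorem~\ref{thm:NPphi}, those are the lattice points actually lying on the Newton polygon and hence contributing to the \edgepoly{} $\bar H_k$ in the sense of Definition~\ref{dfn:edgePol}.

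The main obstacle I anticipate is controlling the interaction between the $y$-degree drop caused by the shifts $y\mapsto y+z_k^{s_k}+c_k(x)$ hidden in $\sigma_k$ (hence in $\pi_k$) and the $x$-weights: a priori $\pi_k^*\phi_i$ is a polynomial in $y$ whose several monomials have the \emph{same} minimal $x$-valuation but different $y$-degrees, so one must argue that after multiplying out $\Phi^B$ and summing over $\Bc(i,w)$ no unwanted cancellation or cross-contamination between different $y$-powers occurs at the boundary. The clean way to handle this is to work with the valuation $v_k$ and the ``leading form'' map simultaneously: show that the map sending $H$ to the pair (its $v_k$-value, its $\lambda_k$-residue together with the induced $y$-degree) is multiplicative in a graded sense, so that the lowest-order part of $\pi_k^*(\Phi^B)$ is genuinely the monomial $\Lambda^B x^{\langle B,V\rangle} y^{i_B}$ for a well-defined $i_B$ depending only on $b_k$ (indeed $i_B=b_k$ up to the normalisation by $B_0$, which is exactly why only $\Bc(i,w)$ matters). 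Once that graded-multiplicativity is in place, \eqref{eq:barHk} follows by reading off, for each $i$, the coefficient of $x^{w_i}y^i$ in $\pi_k^*F/x^{v_k(F)}$ and invoking Theorem~\ref{thm:NPphi} to know which $(i,w_i)$ survive on $\NP(H_k)$; the unit $U_k$ is then disposed of by the $\Lambda^{B-B_0}$ normalisation as above.
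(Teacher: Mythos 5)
Your overall architecture is the paper's: push $\pi_k^*$ through the $\Phi$-adic expansion, use that the units coming from Lemma~\ref{lem:PikPhi} cannot push anything onto the (strictly decreasing) Newton polygon, and divide by $U_k(0,0)=\Lambda^{B_0}$. The one genuinely different choice is that you take Theorem~\ref{thm:NPphi} as an input and extract coefficients point by point on $\NP(H_k)$; granting that theorem, this legitimately bypasses Corollary~\ref{cor:main} and Proposition~\ref{prop:key} (the linear independence of the $\Lambda^B$), which the paper needs because it proves both theorems simultaneously. Your extraction step does work, and it is the clean way to kill the ``cross-contamination'' you worry about: a contribution to $x^{w_i+v_k(F)}y^i$ coming from some $B\in\Bc(i')$ with $i'<i$ would need a monomial $x^a y^{i-i'}$ of $U^B$ with $a=w_i+v_k(F)-\langle B,V\rangle\ge 0$, hence $w_{i'}\le w_i$, contradicting that $(i',w_{i'})$ lies on or above the strictly decreasing polygon while $(i,w_i)$ lies on it. Be aware, however, that your stronger claim that the lowest-order part of $\pi_k^*(\Phi^B)$ is the single monomial $\Lambda^B x^{\langle B,V\rangle}y^{b_k}$ is false for the plain $x$-grading: the units $U_{k,j}$ of Lemma~\ref{lem:PikPhi} may contain pure powers of $y$ with no $x$, so the $x$-initial part of $\pi_k^*(\Phi^B)$ is $x^{\langle B,V\rangle}y^{b_k}U^B(0,y)$, not a monomial. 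The ``graded multiplicativity'' you invoke must be taken with respect to the weight attached to an edge of $\NP(H_k)$ (both weights positive, so every nonconstant term of a unit has strictly positive weight), or simply replaced by the pointwise argument above.

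The genuine gap is the normalisation $U_k(0,0)=\Lambda^{B_0}$. Your proposed verification --- apply \eqref{eq:pikHk} to $\phi_k$ and use that its Weierstrass part has degree $1$ --- only yields information about the unit attached to $\phi_k$ (this is point 2 of Lemma~\ref{lem:PikPhi}); it does not relate the unit $U_k$ of $F$ to $\lambda_{k,k}^{N_k}$, and $\lambda_k(F)=\lambda_k(\phi_k)^{N_k}$ is not a formal consequence of multiplicativity since $F\ne\phi_k^{N_k}$. (In the paper this identity is Lemma~\ref{lem:FPhi}, which is itself deduced from the proof of the present theorem, so it cannot be quoted here.) The missing ingredient is elementary but must be stated: because $F$ and $\phi_k$ are monic of degrees $\dy$ and $d_k$, the unique exponent with $b_k=N_k$ in the $\Phi$-adic expansion is $B_0$, with $f_{B_0}=1$, and $w_{N_k}=0$ since the right-hand vertex of $\NP(H_k)$ is $(N_k,0)$. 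Then your own coefficient identity at $i=N_k$ gives $U_k(0,0)\cdot 1=\sum_{B\in\Bc(N_k,v_k(F))}f_B\Lambda^B=\Lambda^{B_0}$, which is exactly how the paper pins down the constant. With that inserted, and the grading point corrected as above, your plan goes through.
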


Combined with the formulas \eqref{eq:update} of Section
\ref{ssec:resume} for the vectors $V$ and $\Lambda$, Theorems
\ref{thm:NPphi} and \ref{thm:EdgePoly} give an efficient way to decide
if the Weierstrass polynomial $H_k$ is degenerated, and if so, to
compute its edge data.

\begin{xmp}\label{xmp:k0}
  If $k=0$, we have by definition $V=(1,0)$ and $\Lambda=(1,1)$ while
  $v_0(F)=\val(H_0)=0$. Assuming $H_0=\sum_{j=0}^\dy a_i (x) y^i$, we
  find $w_i=\val(a_i)$ and Theorem \ref{thm:NPphi} stands from
  Definition \ref{dnf:NP}. Moreover, $\Bc(i,w_i)$ is then reduced to
  the point $(i,w_i)$ and Theorem \ref{thm:EdgePoly} stands from
  Definition \ref{dfn:edgePol}.
\end{xmp}

\subsection{Proof of Theorems \ref{thm:NPphi} and \ref{thm:EdgePoly}}

Let us first establish some basic properties of the minimal
polynomials $\phi_i$ of $F$. Given a ring $\Ai$, we denote by
$\Ai^\times$ the subgroup of units.  Note that $U\in\Ai[[x,y]]^\times$
if and only if $U(0,0)\in \Ai^\times$.  For $-1 \leq i \leq k$, we
introduce the notations
\[
  v_{k,i} := v_k (\phi_i )= \val(\pi_k^*(\phi_i)) \textrm{ and }
   \lambda_{k,i} := \lambda_k (\phi_i
  )=\tc{\left(\frac{\pi_k^*(\phi_i)}{x^{v_{k,i}}}\right)_{|x=0}}.
\]
\begin{lem}\label{lem:PikPhi} Let $-1\le i \le k$. There exists
  $U_{k,i}\in \Ki_k [[x,y]]^\times$ with
  $U_{k,i} (0, 0) = \lambda_{k,i}$ s.t.:
  \begin{enumerate}
  \item\label{enum:uki} $\pi_k^* (\phi_i ) = x^{v_{k,i}}\,U_{k,i}$ if $i < k$,
  \item\label{enum:ukk} $\pi_k^* (\phi_k) = x^{v_{k,k}}\, y\,U_{k,k}$.
  \end{enumerate}
\end{lem}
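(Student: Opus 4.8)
The plan is to analyze the pull-back $\pi_k^*(\phi_i)$ by exploiting the factorization \eqref{eq:pikHk}, namely $\pi_k^* F = x^{v_k(F)} H_k U_k$ with $U_k(0,0)\in\Ki_k^\times$, applied not to $F$ itself but to the $i$-th minimal polynomial $\phi_i$ together with the fact (noted at the end of Section \ref{sec:ARNP}) that the function call $\NPA(\phi_i)$ generates the same transformations $\pi_0,\ldots,\pi_i$ as $\NPA(F)$. First I would treat the case $i=k$: since $\pi_k$ is obtained from $\pi_i$ by the further substitutions $\sigma_{i+1},\ldots,\sigma_k$, and since the analogue of \eqref{eq:pikHk} for $\phi_i$ reads $\pi_i^*\phi_i = x^{v_i(\phi_i)} H_i^{(\phi_i)} U$, where $H_i^{(\phi_i)}=y$ because $\D(\phi_i)$ ends with $N_i'=1$ (so the $N$-sequence of $\phi_i$ terminates at $H=y$). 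Applying the remaining transformations $\sigma_{i+1}^*\cdots\sigma_k^*$ to the identity $\pi_i^*\phi_i = x^{v_i(\phi_i)}\,y\,U$, and tracking how each $\sigma_j$ sends $y \mapsto x^{m_j}(y+z_j^{s_j}+c_j(x))$ and $x\mapsto z_j^{t_j}x^{q_j}$, one sees that the factor $y$ is carried to $y\,U'$ for a unit $U'$ (the $y$ appearing linearly, the rest being absorbed into a unit since $z_j^{s_j}$ is a unit), while the powers of $x$ accumulate into $x^{v_{k,k}}$. Reading off the value at $(0,0)$ gives $U_{k,k}(0,0)=\lambda_{k,k}$ by the very definition of $\lambda_k$.

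For the case $i<k$, the key point is that $\phi_i$, viewed through $\pi_k$, has already been ``resolved'' by step $i$: the factorization $\pi_i^*\phi_i = x^{v_i(\phi_i)}\,y\,U$ shows that after $\pi_i$ the polynomial $\phi_i$ has a simple root along $y=0$, and this root is precisely the one that the subsequent center shifts $c_j$ and substitutions track. Concretely, after applying $\sigma_{i+1}$ the factor $y$ becomes $x^{m_{i+1}}(y+z_{i+1}^{s_{i+1}}+c_{i+1}(x))$; crucially the constant term $z_{i+1}^{s_{i+1}}$ is a \emph{unit} in $\Ki_{i+1}$, so this whole factor is a unit times a power of $x$, i.e. $x^{m_{i+1}}\cdot(\text{unit})$. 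Iterating $\sigma_{i+2},\ldots,\sigma_k$ keeps multiplying by powers of $x$ and units, never reintroducing a factor of $y$. Hence $\pi_k^*\phi_i = x^{v_{k,i}}\,U_{k,i}$ with $U_{k,i}$ a unit, and again evaluating at $(0,0)$ and comparing with the definition of $\lambda_{k,i}=\tc{(\pi_k^*\phi_i/x^{v_{k,i}})_{|x=0}}$ identifies $U_{k,i}(0,0)=\lambda_{k,i}$. The case $i=-1$, i.e. $\phi_{-1}=x$, is immediate since $\pi_k^*x = \mu_k x^{e_k}$ by \eqref{eq:pikxy}, so $v_{k,-1}=e_k$, $U_{k,-1}=\mu_k$ is a (constant) unit, and $\lambda_{k,-1}=\mu_k$.

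The main obstacle I anticipate is the bookkeeping needed to justify rigorously that the subsequent substitutions $\sigma_{i+1},\ldots,\sigma_k$ do not spoil the clean shape of the factor: one must check that the ``$c_j(x)$'' shifts and the ``$z_j^{s_j}$'' translations interact with the already-present factor $y$ (resp. with the unit) in the claimed way, and that the pull-back of a unit in $\Ki_i[[x,y]]$ under $\sigma_{i+1}\circ\cdots\circ\sigma_k$ is again a unit — this uses that each $\sigma_j$ sends $x\mapsto z_j^{t_j}x^{q_j}$ with $z_j^{t_j}\in\Ki_j^\times$, hence the value at the origin is preserved up to a nonzero scalar, and that $\val(S_k)\le r_k$ guarantees no unexpected cancellation. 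A secondary point is to make precise why exactly one factor of $y$ survives when $i=k$ and none when $i<k$; the cleanest way is to invoke \eqref{eq:pikHk}/\eqref{eq:pikxy} applied to $\phi_i$ in place of $F$ and use that $\NPA(\phi_i)$ terminates with $H^{(\phi_i)}=y$, which makes the dichotomy between $i=k$ and $i<k$ correspond precisely to whether the last batch of substitutions $\sigma_{i+1},\ldots,\sigma_k$ is empty or not. Once this is set up, reading off $U_{k,i}(0,0)=\lambda_{k,i}$ is a direct consequence of the definition of $\lambda_k$.
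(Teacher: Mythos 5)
Your architecture is the one the paper uses: prove item 2 first, then get item 1 for $i<k$ by applying $\sigma_{i+1}^*,\ldots,\sigma_k^*$ to $\pi_i^*(\phi_i)=x^{v_{i,i}}\,y\,U_{i,i}$ and observing that $y\mapsto x^{m_j}(y+z_j^{s_j}+c_j(x))$ contributes $x^{m_j}$ times a unit (since $z_j\neq 0$, $c_j(0)=0$, $m_j>0$), so no factor of $y$ survives; this propagation step, the trivial case $\phi_{-1}=x$, and the identification $U_{k,i}(0,0)=\lambda_{k,i}$ from the definition of $\lambda_k$ all match the paper and are fine.

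The gap is in item 2, which is the real content of the lemma. You justify $H_i^{(\phi_i)}=y$ by saying that the $N$-sequence of $\phi_i$ terminates at $H=y$ and that $\NPA(\phi_i)$ generates the same transformations. But ``terminates at $H=y$'' refers to the run of \NPA{} on $\phi_i$ with $\phi_i$'s \emph{own} Abhyankar shifts, whereas the lemma is a statement about the pull-back under $F$'s map $\pi_k$, whose last shift $c_k$ is computed from $F$'s polynomial $G_k$, not from $\phi_k$. What one gets a priori from the ``same transformations'' remark (and what the paper actually uses) is only that the Weierstrass factor of $\pi_k^*(\phi_k)$ has degree $N'_k=1$, i.e. $\pi_k^*(\phi_k)=x^{v_{k,k}}(y+\beta(x))\,U$ with $\beta(0)=0$. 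Proving $\beta=0$ --- equivalently, that the final shift arising in $\NPA(\phi_k)$ coincides with $F$'s $c_k$ --- is exactly the point, and it is the only place where the defining property \eqref{eq:phik} of the minimal polynomial enters: evaluating the factorization at $y=0$ and using $\pi_k(x,0)=(\mu_k x^{e_k},S_k(x))$ gives $x^{v_{k,k}}\,U(x,0)\,\beta(x)=\phi_k(\mu_k x^{e_k},S_k(x))=0$, hence $\beta=0$. Your proposal never invokes \eqref{eq:phik}, so nothing in it distinguishes $\phi_k$ from an arbitrary monic polynomial of degree $d_k$ with the same earlier edge data, for which the conclusion $\pi_k^*(\phi_k)=x^{v_{k,k}}\,y\,U_{k,k}$ is false in general (one only gets $y+\beta$ with $\beta\neq 0$, as in the Remark following Theorem \ref{thm:HPsiPhi}). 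Invoking the ``same transformations $\pi_i$, $i\le k$'' sentence at the strength you need (identical last shift) is essentially circular, since that is what item 2 encodes. (Minor point: in your $i=k$ paragraph the claim that the factor $y$ is ``carried to $y\,U'$'' under the remaining substitutions is incorrect as stated --- it becomes $x^{m_j}$ times a unit --- but this is harmless since for $i=k$ the batch of remaining substitutions is empty, as you note at the end.)
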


\begin{proof} As \NPA{}($\phi_k$) generates the same transform
  $\pi_k$, we deduce from \eqref{eq:pikHk}:
  \[
    \pi_k^*(\phi_k) = x^{v_k(\phi_k)}\,(y+\beta(x))\,U(x,y)
  \]
  with $U \in \Ki_k [[x,y]]^\times$ and $\beta\in \Ki_k [[x]]$. From
  \eqref{eq:pikxy} and \eqref{eq:phik}, we get
  $x^{v_{k,k}} U(x,0)\,\beta(x) = \phi_k(\mu_k x^{e_k},S_{k}) = 0$,
  i.e. $\beta = 0$. Second equality follows, since
  $U(0, 0) = \lambda_{k,k}$ by definition of $\lambda_k$. First
  equality is then obtained by applying the pull-backs
  $\sigma_j^*$, $j=i+1,\ldots,k$ to
  $\pi_i^*(\phi_i)=x^{v_{i,i}}\,y\,U_{i,i}$.
\end{proof}

\begin{cor}\label{cor:res}
  With the standard notations for intersection multiplicities and
  resultants, we have for any $G\in\Ki[[x]][y]$ Weierstrass:
  \[
    v_k (G) =\frac{(G,\phi_k)_0}{f_k}= \frac{\val
      (\Res_y(G,\phi_k))}{f_k}.
  \]
\end{cor}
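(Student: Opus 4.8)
The plan is to read off the two equalities from the product formula for the valuation $v_k$ together with the classical identification of intersection multiplicities with resultant valuations. First I would recall that, since $\phi_k$ is irreducible with $(\mu_k x^{e_k}, S_k(x))$ one of its roots (a rational Puiseux parametrisation in which $x$ is substituted by a variable running over a ramified cover of degree $e_k$), the quotient ring $\Ki[[x]][y]/(\phi_k)$ is a discrete valuation ring whose normalised valuation is, up to the ramification factor $e_k$, the order in the parameter $x$; more precisely $v_k(G) = \val(\pi_k^*G)$ only sees $\pi_k$ through its restriction to $y=0$, i.e. through the parametrisation $(\mu_k x^{e_k}, S_k(x))$, because by Lemma~\ref{lem:PikPhi}\eqref{enum:ukk} we have $\pi_k^*\phi_k = x^{v_{k,k}} y\, U_{k,k}$ with $U_{k,k}$ a unit, so substituting $y=0$ into $\pi_k^* G$ computes the intersection of $G$ with the branch cut out by $\phi_k$.

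Next I would make the middle equality precise. The intersection multiplicity $(G,\phi_k)_0$ is by definition $\dim_\Ki \Ki[[x,y]]/(G,\phi_k)$, which equals $\sum \val(G(\text{root of }\phi_k))$ summed over the $d_k = e_k f_k$ roots of $\phi_k$ (counted in an algebraic closure), each root being a Puiseux series of ramification $e_k$. Grouping the $e_k$ conjugate roots obtained by the Galois action $x^{1/e_k}\mapsto \zeta x^{1/e_k}$ together, each such group contributes $e_k\cdot v_k(G)$ to the sum if we measure $v_k(G)$ as $\val_x$ of $G$ evaluated along one parametrisation branch; since there are $f_k$ such Galois orbits over $\Ki$, we get $(G,\phi_k)_0 = e_k f_k \, v_k(G)/e_k$... — more cleanly: $v_k(G) = \val(\pi_k^*G)$ where $\pi_k^*$ already incorporates the degree-$e_k$ pullback $x\mapsto \mu_k x^{e_k}$, so $\val(\pi_k^*G)$ is the order in the uniformiser $x$ of the total cover, and $(G,\phi_k)_0 = f_k\, v_k(G)$ because summing over the $f_k$ Galois-conjugate branches over $\Ki$ each contributes $v_k(G)$ to the $x$-adic order of the norm. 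This gives $v_k(G) = (G,\phi_k)_0/f_k$.

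The last equality, $(G,\phi_k)_0 = \val(\Res_y(G,\phi_k))$, is the standard fact that for two Weierstrass polynomials in $\Ki[[x]][y]$ the $x$-valuation of their $y$-resultant equals their intersection multiplicity at the origin; this follows from $\Res_y(G,\phi_k) = \prod_{\phi_k(x,\eta)=0} G(x,\eta)$ (up to the leading coefficient of $\phi_k$, which is $1$ as $\phi_k$ is monic) and the definition of $(\,,\,)_0$ as the dimension of $\Ki[[x,y]]/(G,\phi_k)$, which by the Chinese-remainder/elimination argument equals $\val_x$ of that product. I expect the only delicate point to be bookkeeping the ramification factor $e_k$ versus the residue degree $f_k$ correctly — i.e. being careful that $\pi_k^*$ already folds in the $x\mapsto \mu_k x^{e_k}$ substitution, so that the division is by $f_k = [\Ki_k:\Ki]$ and not by $d_k = e_k f_k$. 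Everything else is a direct invocation of Lemma~\ref{lem:PikPhi} and classical resultant theory.
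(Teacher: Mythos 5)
Your overall route is the same as the paper's: identify $v_k(G)$ with the order of $G$ along the primitive parametrisation $(\mu_k x^{e_k},S_k(x))$, recognise that order as the intersection multiplicity of $G$ with one of the $f_k$ conjugate branches of $\phi_k$ (each branch contributing equally because $G$ is defined over $\Ki$), and finish with the classical identity between the intersection multiplicity of two Weierstrass polynomials and the $x$-valuation of their $y$-resultant. The $f_k$-bookkeeping (after the momentary muddle about ``$e_k\cdot v_k(G)$ per Galois orbit'', which you then correct via the norm formulation) and the resultant step agree with the paper and are fine.

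The one step that actually requires an argument is, however, not justified correctly. You claim that $v_k(G)=\val(\pi_k^*G)$ ``only sees $\pi_k$ through its restriction to $y=0$'', i.e.\ that $\val(\pi_k^*G)=\val\bigl(G(\mu_k x^{e_k},S_k(x))\bigr)$, and you cite point \ref{enum:ukk} of Lemma \ref{lem:PikPhi} for this; but that lemma describes $\pi_k^*\phi_k$, not $\pi_k^*G$, and says nothing about whether setting $y=0$ in $\pi_k^*G$ can raise the $x$-adic valuation. In general it can: for $G=\phi_k$ itself the restriction to $y=0$ vanishes identically, while $\val(\pi_k^*\phi_k)=v_{k,k}$ is finite (so the corollary implicitly excludes $G$ with a branch of too high contact with $\phi_k$). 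The paper obtains this equality from the explicit shape \eqref{eq:pikxy} of $\pi_k$: the $y$-part enters through $\alpha_k x^{r_k}y$ with $\val(S_k)\le r_k$, so the $y^0$-coefficient $G(\mu_k x^{e_k},S_k(x))$ of $\pi_k^*G$ realises the minimal $x$-order. Some argument of this kind, comparing the weight $r_k$ of the $y$-term with the contact of the roots of $G$ along $S_k$, is needed where you instead invoke Lemma \ref{lem:PikPhi}; everything downstream of that point matches the paper.
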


\begin{proof}
  As $\val(S_k)\leq r_k$, we get from \eqref{eq:pikxy}
  $v_k (G) = \val(\pi_k^*(G)) = \val(G(\mu_k
  x^{e_k},S_k(x)))$.
  But this last integer coincides with the intersection multiplicity
  of $\phi_i$ with any one of the $f_k$ conjugate plane branches
  (i.e. irreducible factor in $\algclos{\Ki}[[x]][y]$) of
  $\phi_k$. The first equality follows. The second is well known (the
  intersection multiplicity at $(0, 0)$ of two Weierstrass polynomials
  coincides with the $x$-valuation of their resultant).
\end{proof}

\begin{lem}\label{lem:vLambda} We have  initial conditions
  $v_{0,-1}=1$, $v_{0,0}=0$, $\lambda_{0,-1}=1$ and $\lambda_{0,0}=1$.
  Let $k\ge 1$. The following relations hold $($we recall
  $q_k s_k - m_k t_k = 1$ with $0\le t_k < q_k):$
  \begin{enumerate}
  \item $v_{k,k-1} = q_k v_{k-1,k-1} + m_k$
  \item $v_{k,i} = q_k v_{k-1,i}$ for all $-1\le i < k- 1$.
  \item
    $\lambda_{k,k-1} = \lambda_{k-1,k-1} z_k^{t_k v_{k-1,k-1}
      +s_{k}}$.
  \item $\lambda_{k,i} =\lambda_{k-1,i}z_k^{t_k v_{k-1,i}}$ for all
    $-1 \le i < k - 1$.
  \end{enumerate}
\end{lem}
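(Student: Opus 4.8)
The plan is to prove all the stated relations at once by induction on $k$, the engine being the identity $\pi_k=\pi_{k-1}\circ\sigma_k$, i.e. $\pi_k^{*}=\sigma_k^{*}\circ\pi_{k-1}^{*}$, with $\sigma_k$ as in \eqref{eq:tausigma}. For the base case $k=0$ one simply unwinds the definitions: from $\pi_0(x,y)=(x,y+c_0(x))$ we get $\pi_0^{*}(\phi_{-1})=\pi_0^{*}(x)=x$ and $\pi_0^{*}(\phi_0)=\pi_0^{*}(y-c_0(x))=y$, and reading the $x$-valuation and the trailing coefficient in $y$ at $x=0$ off these two expressions gives $v_{0,-1}=1$, $\lambda_{0,-1}=1$, $v_{0,0}=0$ and $\lambda_{0,0}=1$.

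For the inductive step, fix $k\ge 1$ and feed Lemma \ref{lem:PikPhi} at level $k-1$ into $\pi_k^{*}(\phi_i)=\sigma_k^{*}\bigl(\pi_{k-1}^{*}(\phi_i)\bigr)$: thus for $-1\le i<k-1$ one substitutes $\sigma_k$ into $x^{v_{k-1,i}}U_{k-1,i}$, and for $i=k-1$ into $x^{v_{k-1,k-1}}\,y\,U_{k-1,k-1}$, where $U_{k-1,i}\in\Ki_{k-1}[[x,y]]^{\times}$ with $U_{k-1,i}(0,0)=\lambda_{k-1,i}$. Under $\sigma_k$ the monomial $x^{v_{k-1,i}}$ becomes $z_k^{t_k v_{k-1,i}}\,x^{q_k v_{k-1,i}}$; the extra factor $y$ (present only for $i=k-1$) becomes $x^{m_k}(y+z_k^{s_k}+c_k(x))$, of $x$-valuation $m_k$ and with coefficient of $x^{m_k}$ equal to $y+z_k^{s_k}$ (using $c_k(0)=0$, which holds since $G_k$ is Weierstrass); and the unit $U_{k-1,i}$ becomes a unit $U_{k-1,i}\circ\sigma_k\in\Ki_k[[x,y]]^{\times}$ whose restriction to $x=0$ is the constant $\lambda_{k-1,i}$, because $m_k>0$ forces $\sigma_k(0,y)=(0,0)$. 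Multiplying these contributions and comparing powers of $x$ yields $v_{k,k-1}=q_k v_{k-1,k-1}+m_k$ and $v_{k,i}=q_k v_{k-1,i}$ for $i<k-1$; then dividing by $x^{v_{k,i}}$ and setting $x=0$ leaves, for $i<k-1$, the constant $z_k^{t_k v_{k-1,i}}\lambda_{k-1,i}$, whose trailing coefficient in $y$ is itself, and, for $i=k-1$, the binomial $\lambda_{k-1,k-1}\,z_k^{t_k v_{k-1,k-1}}(y+z_k^{s_k})$, whose trailing coefficient is $\lambda_{k-1,k-1}\,z_k^{t_k v_{k-1,k-1}+s_k}$. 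This gives the four asserted relations.

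None of this is deep; the points I would isolate beforehand are the two ``collapse at $x=0$'' facts — that $m_k>0$ (recorded in Section \ref{ssec:Nksequence}) makes $\sigma_k^{*}$ send units to units and send the transported unit to a $y$-constant once $x=0$, and that $c_k(0)=0$ makes the coefficient of $x^{m_k}$ in $\sigma_k^{*}(y)$ exactly $y+z_k^{s_k}$ — together with the fact $z_k\neq 0$ (equivalently $P_k\neq Z$, visible on the constant term of $\bar H_{k-1}$), which guarantees the trailing coefficients above are nonzero, consistently with $\lambda_{k,i}\neq 0$ from Lemma \ref{lem:PikPhi}. The expected main obstacle is purely the bookkeeping: keeping track of the powers of $z_k$ generated by $x\mapsto z_k^{t_k}x^{q_k}$ and correctly identifying ``trailing in $y$'' with the constant term in each of the two shapes that arise (a $y$-constant, or a degree-one binomial with nonzero constant term).
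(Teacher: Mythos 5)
Your proposal is correct and follows essentially the same route as the paper: both prove the base case by unwinding the definitions and, for $k\ge 1$, apply $\sigma_k^{*}$ to the normal forms $\pi_{k-1}^{*}(\phi_i)=x^{v_{k-1,i}}U_{k-1,i}$ and $\pi_{k-1}^{*}(\phi_{k-1})=x^{v_{k-1,k-1}}\,y\,U_{k-1,k-1}$ from Lemma \ref{lem:PikPhi}, using $c_k(0)=0$, $m_k>0$ and $z_k\neq 0$ to read off the valuations and trailing coefficients. (The ``induction on $k$'' framing is superfluous — only Lemma \ref{lem:PikPhi} at rank $k-1$ is used, exactly as in the paper — but this is cosmetic.)
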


\begin{proof} Initial conditions follow straightforwardly from the
  definitions. Using point \ref{enum:ukk} of Lemma \ref{lem:PikPhi} at
  rank $k-1$ and equality
  $\pi_k^*(\phi_{k-1})=\sigma_k^*\circ \pi_{k-1}^*(\phi_{k-1})$, we
  get
  \[
    \pi_k^*(\phi_{k-1})=z_k^{t_k v_{k-1,k-1}} x^{q_k v_{k-1,k-1} +
      m_k}(y+z_k^{s_k}+c_k) U_{k-1,k-1}(z_k^{t_k}
    x^{q_k},x^{m_k}(y+z_k^{s_k}+c_k)).
  \]
  As $c_k(0)= 0$, $m_k > 0$ and $z_k\ne 0$, it follows that
  \[
    \pi_k^*(\phi_{k-1})=z_k^{t_k v_{k-1,k-1}+s_k}
    x^{q_k\,v_{k-1,k-1}+m_k} \tilde{U}(x,y)
  \]
  with $\tilde{U}(0,0)=U_{k-1,k-1}(0,0)$, that is $\lambda_{k-1,k-1}$
  by point \ref{enum:ukk} of Lemma \ref{lem:PikPhi}. Items $1$ and $3$
  follow. Similarly, using point \ref{enum:uki} of Lemma
  \ref{lem:PikPhi} at rank $k-1$, we get for $i<k-1$
  \[
    \pi_k^*(\phi_i)=\sigma_k^*\circ \pi_{k-1}^*(\phi_i)=z_k^{t_k
      v_{k-1,i}} x^{q_k v_{k-1,i}} U_{k-1,i}(z_k^{t_k}
    x^{q_k},x^{m_k}(y+z_k^{s_k}+c_k(x))).
  \]
  As $U_{k-1,i}(0,0)=\lambda_{k-1,i}\ne 0$ once again by point
  \ref{enum:uki} of Lemma \ref{lem:PikPhi}, items $2$ and $4$ follow.
\end{proof}

The proof of both theorems is based on the following key result:

\begin{prop}\label{prop:key} For all $i, w \in\Ni$, the family
  $\left(\Lambda^B , B \in \Bc(i,w)\right)$ is free over $\Ki$. In
  particular, $\Card \,\, \Bc(i,w)\le f_k$.
\end{prop}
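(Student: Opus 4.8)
The plan is to prove the statement by induction on $k$, combining Lemma~\ref{lem:vLambda} (which records how the valuations $v_{k,j}$ and the coefficients $\lambda_{k,j}$ transform when passing from level $k-1$ to level $k$) with the field–tower decomposition $\Ki_k=\bigoplus_{a=0}^{\ell_k-1}\Ki_{k-1}\,z_k^{\,a}$. For the base case $k=0$ one has $V=(1,0)$ and $\Lambda=(1,1)$ by Example~\ref{xmp:k0}, so $\Bc(i,w)$ is either empty or the singleton $\{(w,i)\}$ and $\Lambda^B=1$; the claim then holds trivially with $f_0=1$. Write $\Bc_{k-1}(\cdot,\cdot)$ for the analogue of $\Bc(\cdot,\cdot)$ attached to $(\phi_{-1},\dots,\phi_{k-1})$, for which the proposition holds by the inductive hypothesis, with bound $f_{k-1}$.

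Now fix $i,w\in\Ni$; we may assume $\Bc(i,w)\neq\emptyset$. Since $b_k=i$ is constant on $\Bc(i,w)$, the nonzero element $\lambda_{k,k}^{\,i}$ of the field $\Ki_k$ is a common factor of all the $\Lambda^B$ and may be divided out without affecting $\Ki$-linear (in)dependence; so it suffices to treat $\prod_{j=-1}^{k-1}\lambda_{k,j}^{b_j}$, where $B=(b_{-1},\dots,b_{k-1},i)$. I first rewrite the condition $\langle B,V\rangle=w$ using items~1--2 of Lemma~\ref{lem:vLambda}: putting $w_0:=w-i\,v_{k,k}$, it becomes $q_k\sum_{j=-1}^{k-1}b_j v_{k-1,j}+b_{k-1}m_k=w_0$. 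As $\gcd(q_k,m_k)=1$, this fixes $b_{k-1}$ modulo $q_k$, say $b_{k-1}\equiv c\pmod{q_k}$ with $c=c(i,w)\in\{0,\dots,q_k-1\}$; combined with the constraint $b_{k-1}<q_k\ell_k$ it forces $b_{k-1}=c+a\,q_k$ for $a\in\{0,\dots,\ell_k-1\}$, and then $\sum_{j=-1}^{k-1}b_j v_{k-1,j}=w_1-a\,m_k$ where $w_1:=(w_0-c\,m_k)/q_k\in\Zi$ depends only on $(i,w)$. The remaining coordinates $(b_{-1},\dots,b_{k-2})$ are constrained exactly as required for level $k-1$, so $B\mapsto\bigl(a,(b_{-1},\dots,b_{k-1})\bigr)$ is a bijection $\Bc(i,w)\to\bigsqcup_{a=0}^{\ell_k-1}\Bc_{k-1}\bigl(c+a q_k,\ w_1-a m_k\bigr)$.

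Next I substitute items~3--4 of Lemma~\ref{lem:vLambda} into $\prod_{j=-1}^{k-1}\lambda_{k,j}^{b_j}$, obtaining $\prod_{j=-1}^{k-1}\lambda_{k,j}^{b_j}=\Bigl(\prod_{j=-1}^{k-1}\lambda_{k-1,j}^{b_j}\Bigr)\,z_k^{\,b_{k-1}s_k+t_k\sum_{j=-1}^{k-1}b_j v_{k-1,j}}$. Plugging in $b_{k-1}=c+a q_k$ and $\sum_{j}b_j v_{k-1,j}=w_1-a m_k$ and using the Bézout relation $q_ks_k-m_kt_k=1$, the exponent of $z_k$ collapses to $e_0+a$ with $e_0:=c\,s_k+t_k\,w_1$ depending only on $(i,w)$. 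Hence, up to the common nonzero factor $z_k^{\,e_0}$, one has $\Lambda^B=z_k^{\,a}\,\nu^B$, where $\nu^B:=\prod_{j=-1}^{k-1}\lambda_{k-1,j}^{b_j}\in\Ki_{k-1}$ is the monomial built at level $k-1$ from the truncated exponent vector $(b_{-1},\dots,b_{k-1})$. A $\Ki$-linear relation $\sum_{B\in\Bc(i,w)}c_B\Lambda^B=0$ then becomes $\sum_{a=0}^{\ell_k-1}z_k^{\,a}\bigl(\sum_B c_B\,\nu^B\bigr)=0$; since $\{1,z_k,\dots,z_k^{\ell_k-1}\}$ is a $\Ki_{k-1}$-basis of $\Ki_k$ and each inner sum lies in $\Ki_{k-1}$, every inner sum vanishes, and the inductive hypothesis applied to each $\Bc_{k-1}(c+a q_k,\,w_1-a m_k)$ forces all $c_B=0$. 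This proves the freeness, and summing the level-$(k-1)$ bounds gives $\Card\,\Bc(i,w)=\sum_{a=0}^{\ell_k-1}\Card\,\Bc_{k-1}(c+a q_k,w_1-a m_k)\le \ell_k\,f_{k-1}=f_k$.

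The step I expect to be the main obstacle is the exponent computation in the third paragraph: one must verify that the power of $z_k$ in $\Lambda^B$ splits cleanly as "$(\text{a quantity depending only on }i,w)+a$". This is exactly where the Bézout identity $q_ks_k-m_kt_k=1$ has to be married to the residue $c$ read off from the constraint $\langle B,V\rangle=w$, and one must check that $w_1$ (hence $e_0$) is a well-defined integer independent of the inner coordinates $(b_{-1},\dots,b_{k-2})$. Everything else is routine bookkeeping with Lemma~\ref{lem:vLambda} and the graded decomposition of $\Ki_k$ over $\Ki_{k-1}$.
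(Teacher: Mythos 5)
Your proof is correct and follows essentially the same route as the paper: induction on $k$, the update formulas of Lemma \ref{lem:vLambda}, the B\'ezout relation $q_ks_k-m_kt_k=1$ collapsing the exponent of $z_k$, and the fact that $1,z_k,\ldots,z_k^{\ell_k-1}$ is a $\Ki_{k-1}$-basis of $\Ki_k$. The only differences are cosmetic bookkeeping (you parametrise by $b_{k-1}=c+aq_k$ and an explicit bijection with the level-$(k-1)$ sets, and you get the bound $\Card\,\Bc(i,w)\le \ell_k f_{k-1}=f_k$ by counting rather than by a dimension argument), so the argument matches the paper's proof.
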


\begin{proof} We show this property by induction on $k$. If $k = 0$,
  the result is obvious since $\Bc(i,w)=\{(i,w)\}$ and
  $\Lambda=(1,1)$. Suppose $k > 0$. As $\lambda_{k,k}$ is invertible
  and $b_k=i$ is fixed, we are reduced to show that the family
  $\left(\Lambda^B , B \in \Bc(0,w)\right)$ is free for all
  $w\in \Ni$. Suppose given a $\Ki$-linear relation
  \begin{equation}\label{eq:cBlambdaB}
    \sum_{B\in \Bc(0,w)} c_B \Lambda^B =   \sum_{B\in \Bc(0,w)} c_B \lambda_{k,-1}^{b_{-1}}\cdots\lambda_{k,k-1}^{b_{k-1}}= 0.
  \end{equation}
  Using $b_k = 0$, points 3 and 4 in Lemma \ref{lem:vLambda} give
  $\Lambda^B = \mu_B z_k^{N_B}$ where
  \[
    \mu_B = \prod_{j=-1}^{k-1} \lambda_{k-1,j}^{b_j} \in
    \Ki_{k-1} \textrm{ and } N_B = b_{k-1} s_k +
    t_k\sum_{j=-1}^{k-1} b_j v_{k-1,j}.
  \]
  Points 1 ($q_k\,v_{k-1,k-1}=v_{k,k-1}-m_k$) and 2
  ($q_k\,v_{k-1,j} = v_{k,j}$) in Lemma \ref{lem:vLambda} give
  \begin{equation}\label{eq:qkNB}
    q_k N_B = b_{k-1} (q_k s_k - m_k t_k ) + t_k \sum_{j=-1}^{k-1} b_j
    v_{k,j} = b_{k-1} + t_k w,
  \end{equation}
  the second equality using $\langle B, V \rangle = w$ and
  $b_k=0$. Since $0\le b_{k-1} < q_k \ell_k$ and $N_B$ is an integer,
  it follows from \eqref{eq:qkNB} that $N_B=n+\alpha$ where
  $n=\lceil t_k w/q_k\rceil$ and $0\leq \alpha <\ell_k$. Dividing
  \eqref{eq:cBlambdaB} by $z_k^n$ and using
  $\Lambda^B = \mu_B z_k^{N_B}$, we get
  \[
    \sum_{\alpha=0}^{\ell_k -1} a_{\alpha} z_{k}^\alpha = 0,
    \textrm{ where } a_{\alpha} =\sum_{B\in \Bc(0,w) ,N_B =
      \alpha+n} c_{B} \mu_B .
  \]
  Since $a_{\alpha} \in \Ki_{k-1}$ and $z_k\in \Ki_k$ has minimal
  polynomial $P_k$ of degree $\ell_k$ over $\Ki_{k-1}$, this implies
  $a_{\alpha} = 0$ for all $0\leq \alpha < \ell_k$, i.e., using
  \eqref{eq:qkNB}:
  \[
    \sum_{\substack{B\in \Bc(0,w)\\ b_{k-1}=q_k\,(\alpha+n)-t_k\,w}}
    c_{B} \lambda_{k-1,-1}^{b_{-1}}\cdots \lambda_{k-1,k-1}^{b_{k-1}}
    = 0.
  \]
  By induction, we get $c_B=0$ for all $B\in \Bc(0,w)$, as required.
  The first claim is proved. The second claim follows immediately
  since $\Lambda^B \in \Ki_k$ is non zero for all $B$.
\end{proof}

\begin{cor}\label{cor:main}
  Consider $G=\sum_{B\in\Bc(i)} g_B \Phi^B$ non zero.  Then
  $\pi_k^*(G)=x^{w}\,y^i\,\Ut $ with $\Ut\in \Ki_k[[x,y]]^ \times$,
  $w=\min_{ g_B\ne 0} \langle B,V\rangle$ and
  $\Ut(0,0)=\sum_{B\in \Bc(i,w)}g_B\Lambda^B\ne 0$.  In particular,
  $v_k(G)=w$ and $\lambda_k(G)=\Ut(0,0)$.
\end{cor}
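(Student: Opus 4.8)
The plan is to combine the multiplicativity of $\pi_k^*$, $v_k$ and $\lambda_k$ with the explicit shape of $\pi_k^*(\phi_i)$ from Lemma \ref{lem:PikPhi}, and then to invoke Proposition \ref{prop:key} to guarantee that no cancellation occurs in the lowest-order term. First I would treat a single monomial $\Phi^B$ with $B\in\Bc(i)$, i.e.\ $b_k=i$. Writing $\Phi^B=\prod_{j=-1}^k\phi_j^{b_j}$ and applying the ring homomorphism $\pi_k^*$, Lemma \ref{lem:PikPhi} gives $\pi_k^*(\phi_j)=x^{v_{k,j}}U_{k,j}$ for $j<k$ and $\pi_k^*(\phi_k)=x^{v_{k,k}}\,y\,U_{k,k}$, with each $U_{k,j}\in\Ki_k[[x,y]]^\times$ and $U_{k,j}(0,0)=\lambda_{k,j}$. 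Multiplying these out yields
\[
  \pi_k^*(\Phi^B)=x^{\langle B,V\rangle}\,y^{b_k}\,\prod_{j=-1}^k U_{k,j}^{b_j},
\]
where the exponent of $x$ is exactly $\langle B,V\rangle$ since $V=(v_{k,-1},\ldots,v_{k,k})$, the exponent of $y$ is $b_k=i$, and the product of units is again a unit whose value at $(0,0)$ is $\prod_j\lambda_{k,j}^{b_j}=\Lambda^B$, which is nonzero for every $B$ as noted after the definition of $\Lambda$.

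Next I would sum over $B\in\Bc(i)$. Set $w:=\min_{g_B\ne 0}\langle B,V\rangle$ (finite since $G\ne 0$). Factoring out $x^w y^i$,
\[
  \pi_k^*(G)=x^{w}\,y^{i}\sum_{g_B\ne 0} g_B\, x^{\langle B,V\rangle-w}\,\prod_{j=-1}^k U_{k,j}^{b_j}
  \;=\;x^{w}\,y^{i}\,\Ut(x,y),
\]
and it remains to show $\Ut(0,0)=\sum_{B\in\Bc(i,w)}g_B\Lambda^B$ is nonzero, which then forces $\Ut\in\Ki_k[[x,y]]^\times$, hence $v_k(G)=\val(\pi_k^*(G))=w$ and $\lambda_k(G)=\tc{(\pi_k^*(G)/x^w)_{|x=0}}=\Ut(0,0)$ by the definitions of $v_k$ and $\lambda_k$ (the trailing $y$-coefficient of $y^i\Ut$ at $x=0$ being $\Ut(0,0)$). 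Evaluating the sum at $(x,y)=(0,0)$ kills every term with $\langle B,V\rangle>w$ and replaces each surviving unit product by $\Lambda^B$, giving $\Ut(0,0)=\sum_{B\in\Bc(i,w)}g_B\Lambda^B$.

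The one genuine point, and the main obstacle, is the nonvanishing of this last sum: a priori the $\Ki$-linear combination $\sum_{B\in\Bc(i,w)}g_B\Lambda^B$ of elements of $\Ki_k$ could vanish. This is precisely where Proposition \ref{prop:key} enters: since $b_k=i$ is fixed and common to all $B\in\Bc(i,w)$, the family $(\Lambda^B)_{B\in\Bc(i,w)}$ is $\Ki$-linearly independent, so the sum is zero only if all $g_B$ with $B\in\Bc(i,w)$ vanish; but at least one such $g_B$ is nonzero by the very definition of $w$ as the minimum over $\{B:g_B\ne 0\}$. Hence $\Ut(0,0)\ne 0$, completing the proof. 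The only care needed is bookkeeping: checking that the $x$-exponent coming out of Lemma \ref{lem:PikPhi} is genuinely $\langle B,V\rangle$ (which holds because $V$ is defined as $(v_k(\phi_{-1}),\ldots,v_k(\phi_k))$), and that no extra powers of $y$ are hidden in the units $U_{k,j}$ (which is fine, as they are units in $\Ki_k[[x,y]]$, so $y^i$ is exactly the $y$-order).
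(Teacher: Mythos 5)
Your proposal is correct and follows essentially the same route as the paper: apply Lemma \ref{lem:PikPhi} to each $\Phi^B$ with $b_k=i$, factor out $x^w y^i$ with $w=\min_{g_B\ne 0}\langle B,V\rangle$, and use Proposition \ref{prop:key} (linear independence of the $\Lambda^B$ over $\Ki$, together with the fact that some $g_B$ with $\langle B,V\rangle=w$ is nonzero) to rule out cancellation in the constant term, which makes $\Ut$ a unit and yields $v_k(G)=w$ and $\lambda_k(G)=\Ut(0,0)$. The only difference is presentational: you expand monomial by monomial where the paper writes the sum at once with $U(0,0)=\Lambda$, and you spell out the nonvanishing argument slightly more explicitly.
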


\begin{proof}
  By linearity of $\pi_k^*$, denoting
  $U = (U_{k,-1} ,\ldots , U_{k,k} )$ with $U_{k,i}$ defined in Lemma
  \ref{lem:PikPhi}, we have
  \[
    \pi_k^*(G) = \left(\sum_{B\in\Bc(i)}g_B \, x^{<B,V>}\,U^B \right)\,y^i
    \text{ with } U(0,0)=\Lambda.
  \]
  Letting $w=\min_{ g_B\ne 0} \langle B,V\rangle$, we deduce
  \[
    \pi^{*}_k(G) =\left(\sum_{B\in \Bc(i,w)} g_B \Lambda^B + R \right)
    x^{w} y^i \textrm{ where } R\in \Ki_k[[x,y]] \textrm{ satisfies
    } R(0,0)=0.
  \]
  As $\sum_{B\in\Bc(i,w)}g_B \Lambda^B \neq 0$ by Proposition
  \ref{prop:key}, the first two equalities follows. The last two
  equalities follow from the definitions of $v_k(G)$ and
  $\lambda_k(G)$.
\end{proof}

\paragraph{Proof of Theorems \ref{thm:NPphi} and \ref{thm:EdgePoly}.}
We prove both theorems simultaneously. We may write
$F=\sum_{i=0}^{N_k} \sum_{B\in \Bc(i)} f_B\Phi^B$. Hence, Corollary
\ref{cor:main} combined with the definition of $w_i$ and the linearity
of $\pi_k^*$ implies
\[
  F_k :=\frac{\pi^{*}_k(F)}{x^{v_k(F)}}
  =\sum_{i=0}^{N_k} \,x^{w_i}\, y^i\, \Ut_i
\]
where $\Ut_i\in \Ki_k[[x,y]]$ is $0$ if $w_i=\infty$, and
$\Ut_i(0,0)=\sum_{B\in \Bc(i,w_i+v_k(F))} f_B \Lambda^B\ne 0$
otherwise.  As $H_k$ is Weierstrass of degree $N_k$, it follows from
this formula combined with \eqref{eq:pikHk}, that $\NP(H_k)$
coincides with the lower convex hull of the points $(i,w_i)$,
$i=0,\ldots,N_k$, proving Theorem \ref{thm:NPphi}. More precisely, we
deduce that there exists $\mu \in \Ki_k^\times$ such that
\[
  \mu\bar{H}_k=\sum_{(i,w_i )\in\NP(H_k)} \left(\sum_{B\in \Bc(i,w_i+v_k(F))} f_B \Lambda^B \right) x^{w_i} y^i.
\]
As $\bar{H}_k$ is Weierstrass of degree $N_k$, then $w_{N_k} = 0$
and $w_i >0$ for $i< N_k$. The previous equation forces
\[
  \mu=\sum_{B\in \Bc(N_k,v_k(F))} f_B \Lambda^B .
\]
But $F$ and $\phi_k$ being monic of respective degrees $\dy$ and
$d_k$, the vector $B_0 = (0,\ldots 0, N_k) \in \Bc$ is the unique
exponent in the $\Phi$-adic expansion of $F$ with last coordinate
$b_k = N_k=d/d_k$ and we have moreover $f_{B_0} = 1$.  This forces
$\Bc(N_k,v_k(F)) = \{B_0\}$ and we get $\mu = \Lambda^{B_0}$, thus
proving Theorem \ref{thm:EdgePoly}. $\hfill\square$

\subsection{Formulas for $\lambda_k(\phi_k)$ and $v_k(\phi_k)$}
In order to use Theorems \ref{thm:NPphi} and \ref{thm:EdgePoly} for
computing the edge data of $H_k$, we need to compute
$v_{k,k}=v_k(\phi_k)$, $\lambda_{k,k}=\lambda_k(\phi_k)$, $v_k(F)$ and
$\lambda_k(F)$ in terms of the previously computed edge data
$(q_1,m_1,P_1,N_1),\ldots,(q_k,m_k,P_k,N_k)$ of $F$. We begin with the
following lemma:

\begin{lem}\label{lem:FPhi}
  Let $0\le k \le g$. We have $v_k(F)= N_k v_{k,k}$ and
  $\lambda_k(F) =\lambda_{k,k}^{N_k}$.
\end{lem}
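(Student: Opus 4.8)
The plan is to deduce both identities directly from (the proofs of) Theorems~\ref{thm:NPphi} and~\ref{thm:EdgePoly}, the crucial observation being that $\phi_k^{N_k}$ is the unique ``leading'' monomial of the $\Phi$-adic expansion of $F$. Recall that $\Phi^{B_0} = \phi_k^{N_k}$ for $B_0 = (0,\dots,0,N_k)$, that $B_0$ is the only exponent with last coordinate $b_k = N_k$ occurring in the $\Phi$-adic expansion of $F$, and that $f_{B_0} = 1$ (this follows from $F$ and $\phi_k$ being monic of degrees $\dy = N_k\,d_k$ and $d_k$, exactly as used at the end of the proof of Theorem~\ref{thm:EdgePoly}). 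It is also worth noting that both $v_k$ and $\lambda_k$ are semi-group homomorphisms, so that $v_k(\phi_k^{N_k}) = N_k v_{k,k}$ and $\lambda_k(\phi_k^{N_k}) = \lambda_{k,k}^{N_k}$ are automatic; the content of the lemma is that $F$ and $\phi_k^{N_k}$ have the same image under $v_k$ and under $\lambda_k$.

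First I would treat the valuation. Since $H_k$ is a Weierstrass polynomial of degree $N_k$, its Newton polygon has rightmost vertex $(N_k,0)$, so Theorem~\ref{thm:NPphi} gives $w_{N_k} = 0$. By the defining formula~\eqref{eq:wj} for $w_{N_k}$ and the uniqueness of $B_0$, this equality reads $\langle B_0, V\rangle - v_k(F) = 0$, i.e.\ $v_k(F) = \langle B_0, V\rangle = v_k(\Phi^{B_0}) = v_k(\phi_k^{N_k}) = N_k v_{k,k}$.

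For the trailing coefficient I would invoke the description established in the proof of Theorem~\ref{thm:NPphi}: writing $F_k := \pi_k^*(F)/x^{v_k(F)} = \sum_{i=0}^{N_k} x^{w_i} y^i \Ut_i$ with $\Ut_i \in \Ki_k[[x,y]]$, one has $w_i > 0$ for every $i < N_k$ because $\bar H_k$ is Weierstrass. Specialising at $x = 0$ therefore annihilates all terms but the one of index $N_k$, so $(F_k)_{|x=0} = y^{N_k}\,\Ut_{N_k}(0,y)$, and since $\Ut_{N_k}(0,0) \ne 0$ (as $w_{N_k} = 0$) we get $\lambda_k(F) = \tc{(F_k)_{|x=0}} = \Ut_{N_k}(0,0)$. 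By the computation in the proof of Theorem~\ref{thm:EdgePoly} this equals $\sum_{B\in\Bc(N_k,v_k(F))} f_B \Lambda^B = f_{B_0}\Lambda^{B_0} = \Lambda^{B_0}$, using that $\Bc(N_k,v_k(F)) = \{B_0\}$; and $\Lambda^{B_0} = \lambda_k(\Phi^{B_0}) = \lambda_k(\phi_k^{N_k}) = \lambda_{k,k}^{N_k}$.

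I do not expect a genuine obstacle: the whole argument is an exercise in unwinding definitions once Theorems~\ref{thm:NPphi}--\ref{thm:EdgePoly} are available. The one point to be careful about is \emph{when} to use that $H_k$ is Weierstrass --- it is precisely this property that forces $w_{N_k} = 0$ and $w_i > 0$ for $i < N_k$, hence that $v_k$ and $\lambda_k$ only ``see'' the top power $\phi_k^{N_k}$ and are insensitive to any cancellation among the lower terms $\Phi^B$, $b_k < N_k$. A more self-contained alternative would work with the $\phi_k$-adic expansion $F = \phi_k^{N_k} + \sum_{j\ge1} f_j\,\phi_k^{N_k-j}$ and prove $v_k(f_j\phi_k^{N_k-j}) > N_k v_{k,k}$ for $j \ge 1$ using Lemma~\ref{lem:PikPhi}; but this strict domination is essentially the key-polynomial estimate already packaged in Proposition~\ref{prop:key}, so routing through the $\Phi$-adic formalism is the economical choice.
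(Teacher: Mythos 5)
Your proof is correct and takes essentially the same route as the paper's: both read off $v_k(F)=\langle B_0,V\rangle=N_kv_{k,k}$ from the fact that $B_0$ is the unique exponent with $b_k=N_k$ (equivalently $w_{N_k}=0$ and $\Bc(N_k,v_k(F))=\{B_0\}$), and both get $\lambda_k(F)=\Lambda^{B_0}=\lambda_{k,k}^{N_k}$ by specialising $\pi_k^*F/x^{v_k(F)}$ at $x=0$, using that $w_i>0$ for $i<N_k$ since $H_k$ is Weierstrass. The only difference is cosmetic: you re-derive these facts explicitly from Theorems \ref{thm:NPphi} and \ref{thm:EdgePoly}, whereas the paper simply cites them from the proof of those theorems.
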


\begin{proof}
  We have shown during the proof of Theorems \ref{thm:NPphi} and
  \ref{thm:EdgePoly} that $\Bc(N_k,v_k(F))=\{B_0\}$ with
  $B_0=(0,\ldots,0,N_k)$. By definition of $\Bc(N_k,v_k(F))$, we get
  the first point. From the definition of $\lambda_k$, we have
  $\lambda_k(F)=\tc{F_k(0,y)}=\tc{\bar{F_k}(0,y)}$ and we have shown
  that $\bar{F_k}(0,y)=\Lambda^{B_0}\bar{H_k}(0,y)$. Since $\bar{H_k}$
  is monic, we deduce $\tc{\bar{F_k}(0,y)}=\Lambda^{B_0}$.
\end{proof}

\begin{prop}\label{prop:vkk}
  For any $1\le k \le g$, we have the equalities
  \[
    v_{k,k}= q_k \ell_k v_{k,k-1}\textrm{ and } \lambda_{k,k}=q_k
    z_k^{1-s_k-\ell_k}P_k'(z_k)\lambda_{k,k-1}^{q_k \ell_k}.
  \]
\end{prop}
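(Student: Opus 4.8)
The plan is to compute $\pi_k^*(\phi_k)$ directly from the recursive definition of the minimal polynomials, reusing the machinery of Lemma~\ref{lem:PikPhi} and Lemma~\ref{lem:vLambda}. The key observation is that $\phi_k$ is the minimal polynomial of the parametrisation $(\mu_k x^{e_k}, S_k(x))$, and since $\NPA(\phi_k)$ generates the same transforms $\pi_1,\ldots,\pi_k$ as $F$ does, its edge data list is $\D(\phi_k) = \big((q_1,m_1,P_1,N'_1),\ldots,(q_k,m_k,P_k,N'_k=1)\big)$ with $N'_i = N_i/N_k$. In particular $N'_{k-1} = N_{k-1}/N_k = q_k\ell_k$, so the ``$H$-polynomial'' of $\phi_k$ at rank $k-1$ is a \emph{degenerated} Weierstrass polynomial of degree $q_k\ell_k$ whose edge data is $(q_k, m_k, P_k, 1)$. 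I would first record this so that all quantities below refer unambiguously to the $\phi_k$-tower.

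**Computing $v_{k,k}$.** For the valuation statement, I would apply Lemma~\ref{lem:vLambda} to the $\phi_k$-tower at rank $k$: writing $v^{(\phi_k)}_{k,i}$ for the analogue of $v_{k,i}$ computed along $\phi_k$, one has by definition $v_{k,k} = v_k(\phi_k) = v^{(\phi_k)}_{k,k}$ and $v_{k,k-1} = v_k(\phi_{k-1}) = v^{(\phi_k)}_{k,k-1}$ (the transforms agree). Since the last $H$-polynomial of the $\phi_k$-tower has degree $N'_k = 1$ and the degenerated polynomial $H_{k-1}$ of that tower has degree $q_k\ell_k$, Corollary~\ref{cor:res} applied to $G = H_{k-1}$ of the $\phi_k$-tower — or more directly the intersection-multiplicity interpretation — gives $v_k(\phi_k) = (\phi_k,\phi_k)_0 / f_k$ interpreted as the valuation of $\pi_k^* \phi_k$; but the cleanest route is to expand $\phi_k$ in its own $\Phi'$-adic basis and note that its boundary polynomial at rank $k-1$ is $c\,(P_k(y^{q_k}/x^{m_k})\,x^{m_k\ell_k})^{1}$, so the point of $\NP$ with abscissa $N'_k = 1$ sits at height $v_{k,k}$ while the point with abscissa $q_k\ell_k$ sits at height $v_{k,k-1}\cdot q_k\ell_k$ by Lemma~\ref{lem:FPhi}; straightness of slope $-m_k/q_k$ over the interval $[0, q_k\ell_k]$ then forces $v_{k,k} = q_k\ell_k\, v_{k,k-1}$ after checking the bottom vertex is at $y^0$. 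I expect a short computation with the slope to close this.

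**Computing $\lambda_{k,k}$.** This is the delicate part. I would apply the transform $\sigma_k$ of \eqref{eq:tausigma} to $\pi_{k-1}^*(\phi_k)$. Since $\phi_k$ is degenerated at rank $k-1$ in its own tower with residual polynomial $P_k$ and exponent $N = 1$, equation \eqref{eq:quasihom} gives $\overline{H^{(\phi_k)}_{k-1}} = c\, P_k(y^{q_k}/x^{m_k})\, x^{m_k\ell_k}$; substituting $y \mapsto x^{m_k}(y + z_k^{s_k})$, $x \mapsto z_k^{t_k}x^{q_k}$ and extracting the lowest-order term in $x$ and then in $y$, the factor $P_k$ evaluated near $Z_k = z_k$ contributes its \emph{first nonzero Taylor coefficient}, namely $P_k'(z_k)$ (since $P_k(z_k) = 0$ and $z_k$ is a simple root because $P_k$ is irreducible over a perfect field). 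Tracking the monomial factors: the $q_k$ from the change of variable $y^{q_k} \mapsto (\ldots)^{q_k}$ linearising $P_k(Z) \approx P_k'(z_k)(Z - z_k)$, the power $z_k^{1 - s_k - \ell_k}$ from collecting the $z_k^{t_k}$-substitutions against $v_{k,k-1}$ via items~3--4 of Lemma~\ref{lem:vLambda}, and the base factor $\lambda_{k,k-1}^{q_k\ell_k}$ from $\lambda_k(\phi_{k-1})^{q_k\ell_k}$ by the same Corollary~\ref{cor:main}/Lemma~\ref{lem:FPhi} reasoning applied to $\phi_k$ in place of $F$ (with $N_k$ replaced by $q_k\ell_k = N'_{k-1}$). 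The main obstacle is bookkeeping the $z_k$-exponents correctly: one must combine the $t_k v_{k-1,k-1}$ shift from Lemma~\ref{lem:vLambda}(3), the $s_k$ from the $y + z_k^{s_k}$ substitution appearing $\ell_k$ times (once per factor of $P_k'(z_k)$'s linear term after homogenising), and the defect from the exponent $q_k s_k - m_k t_k = 1$. I would set up the substitution on $\overline{H^{(\phi_k)}_{k-1}}$ explicitly, expand $P_k$ to first order about $z_k$, and read off all three factors; the identity $q_k s_k - m_k t_k = 1$ is what makes the exponents collapse to the stated closed form.
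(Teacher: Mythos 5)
Your overall strategy --- viewing $\phi_k$ through its own tower, where the rank-$(k-1)$ polynomial is degenerated of degree $q_k\ell_k$ with residual polynomial $P_k$ and exponent $1$, then pushing through $\sigma_k$ and extracting the first nonzero Taylor coefficient $P_k'(z_k)$ --- is exactly the approach of the paper, and your $\lambda_{k,k}$ paragraph identifies the right ingredients (that $P_k(z_k)=0$ and $z_k$ is a simple root, the factor $q_k$ from differentiating $(y+z_k^{s_k}+c_k)^{q_k}$, the identity $\lambda_{k-1}(\phi_k)=\lambda_{k-1,k-1}^{q_k\ell_k}$ from Lemma~\ref{lem:FPhi} applied to $\phi_k$ at rank $k-1$, and the collapse of $z_k$-exponents via $q_ks_k-m_kt_k=1$ and Lemma~\ref{lem:vLambda}), although it remains a plan: none of the bookkeeping that actually produces $z_k^{1-s_k-\ell_k}$ is carried out.

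The genuine gap is in the valuation identity $v_{k,k}=q_k\ell_k v_{k,k-1}$. The first suggestion, invoking Corollary~\ref{cor:res} to write ``$(\phi_k,\phi_k)_0/f_k$'', is vacuous (the self-resultant is zero), and the ``cleanest route'' that replaces it does not hold together: the two points you place on one Newton polygon --- abscissa $1$ at height $v_{k,k}$ and abscissa $q_k\ell_k$ at height $q_k\ell_k v_{k,k-1}$ --- live at different ranks. The first comes from Lemma~\ref{lem:PikPhi}(2) at rank $k$, whereas Lemma~\ref{lem:FPhi} applied to $\phi_k$ at rank $k-1$ gives height $w=v_{k-1}(\phi_k)=q_k\ell_k\,v_{k-1,k-1}$ (note $v_{k-1,k-1}$, not $v_{k,k-1}$) at abscissa $q_k\ell_k$. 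Moreover, a segment of slope $-m_k/q_k$ through your two points would force $v_{k,k}=q_k\ell_k v_{k,k-1}+m_k(q_k\ell_k-1)/q_k$, not the claimed formula (with the correct values that segment is horizontal), so ``straightness forces the identity'' cannot be the mechanism. What is actually needed is the behaviour of $\val$ under $\sigma_k$: writing $\pi_{k-1}^*(\phi_k)=x^{w}\tilde H_{k-1}\tilde U_{k-1}$, every point of the straight lowest edge of $\tilde H_{k-1}$ is sent to $x$-level $q_k(w+m_k\ell_k)$, but cancellation could a priori raise the valuation --- and it \emph{does} cancel at the $y^0$ coefficient, precisely because $P_k(z_k)=0$. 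One must check that the $y^1$ coefficient survives, i.e. that $\partial_y\bigl(P_k(z_k^{-t_km_k}(y+z_k^{s_k}+c_k)^{q_k})\bigr)(0,0)=q_kz_k^{1-s_k}P_k'(z_k)\neq 0$, to conclude $v_{k,k}=q_k(w+m_k\ell_k)$; then $w+m_k\ell_k=\ell_k(q_kv_{k-1,k-1}+m_k)=\ell_k v_{k,k-1}$ by Lemma~\ref{lem:vLambda}(1) gives the statement. This non-cancellation is the same fact that produces $P_k'(z_k)$ in $\lambda_{k,k}$; your sketch uses it there but never for $v_{k,k}$, which is exactly where your valuation argument would fail.
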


\begin{proof}
  To simplify the notations of this proof, let us denote
  $w=v_{k-1}(\phi_k)$, $\gamma=\lambda_{k-1}(\phi_k)$ and
  $(m,q,s,t,\ell,z)=(m_k,q_k,s_k,t_k,\ell_k,z_k)$.  Remember from
  Section \ref{sec:ARNP} that by definition of $\phi_k$, both $\phi_k$
  and $F$ generate the same transformations $\sigma_i$ and $\tau_i$
  for $i\leq k$. As in \eqref{eq:pikHk}, there exists
  $\Ut_{k-1}\in\Ki[[x,y]]^\times$ satisfying $\Ut_{k-1}(0,0)=\gamma$
  and $\tilde{H}_{k-1}\in\Ki[[x]][y]$ Weierstrass of degree $q\,\ell$
  such that $\pi_{k-1}^{*}(\phi_k)= x^w \tilde{H}_{k-1} \Ut_{k-1}$,
  where
  \[
  \tilde H_{k-1}(x,y)=P_k(x^{-m}\,y^q)\,x^{m \ell}+\sum_{m i + q j > m q \ell} h_{ij}\, x^j\, y^i.
  \]
  We deduce that there exists $R_0,R_1,R_2\in \Ki_k[[x,y]]$ such that
  \begin{eqnarray*}
    \pi_{k}^{*} (\phi_k) & := & (\pi_{k-1}^{*} (\phi_k)) (z^t x^q,x^m(y+z^s+c_k(x))) \\
                         & =  &z^{tw}x^{qw}\left(z^{tm\ell}\,x^{mq\ell}(G_k+x R_0)\right)\,\left(\gamma+x\,R_1+y\,R_2\right)%
  \end{eqnarray*}%
  where we let
  $G_k(x,y):=P_k(z^{-tm}(y+z^s+c_k(x))^q)\in\Ki_k[[x]][y]$. It follows
  that there exists $R\in \Ki_k[[x,y]]$ such that
  \begin{equation}\label{eq:pikstar}
    \pi_{k}^{*} (\phi_k) = z^{t(w+m\ell)}\,x^{q\,(w+m\ell)} \left((\gamma+y\, R_2)\,G_k+x\,R\right).
  \end{equation}      
  As $G_k(0,y)$ is not identically zero, we deduce from
  \eqref{eq:pikstar} that $v_k(\phi_k)=q\,(w+m\ell)$. Using Lemma
  \ref{lem:FPhi} for $F=\phi_k$ and the valuation $v_{k-1}$, together
  with Point 1 of Lemma \ref{lem:vLambda}, we have
  $w+m\ell=\ell v_{k,k-1}$, which implies $v_{k,k}=q\,\ell\,v_{k,k-1}$
  as expected.  Using $c_k(0)=0$ and the relation $sq-tm=1$, we get
  $G_k(0,0)=P_k(z_k)=0$ and
  $\partial_y G_k (0,0)=q z^{1-s} P'_k(z)\neq 0$. Combined with
  \eqref{eq:pikstar}, this gives
  \[
    \lambda_{k,k}=\gamma z^{t\,\ell\,v_{k,k-1}} \left(q z^{1-s}
      P'_k(z)\right) =
    \gamma\,z^{q\,\ell\,t\,v_{k-1,k-1}+\ell\,t\,m+1-s}\,q\,P_k'(z),
  \]
  the second equality using Point 1 of Lemma \ref{lem:vLambda} once
  again. Now, using Lemma \ref{lem:FPhi} for $F=\phi_k$ and the
  morphism $\lambda_{k-1}$, we get $\gamma=\lambda_{k-1,k-1}^{\ell q}$
  i.e.
  $\lambda_{k,k} =q P'_k(z)\lambda_{k,k-1}^{q \ell} z^{1-s-\ell}$.
\end{proof}

\subsection{Simple formulas for $V$ and $\Lambda$}\label{ssec:resume}
For convenience to the reader, let us summarize the formulas which
allow to compute in a simple recursive way both lists
$V=(v_{k,-1},\ldots, v_{k,k})$ and
$\Lambda=(\lambda_{k,-1},\ldots,\lambda_{k,k})$.

If $k=0$, we let $V=(1,0)$ and $\Lambda=(1,1)$. Assume $k\ge 1$. Given
the lists $V$ and $\Lambda$ at rank $k-1$ and given the $k$-th edge
data $(q_k,m_k,P_k,N_k)$, we update both lists at rank $k$ thanks to
the formulas:
\begin{equation}\label{eq:update}
  \begin{cases}
    v_{k,i} = q_k v_{k-1,i} \quad {\small\text{$-1\le i<k-1$}}\\%
    v_{k,k-1} = q_k v_{k-1,k-1}+m_k\\%
    v_{k,k} = q_k\ell_k v_{k,k-1}
  \end{cases}
  \ 
  \begin{cases}
    \lambda_{k,i} = \lambda_{k-1,i}z_k^{t_k v_{k-1,i}}\quad
    {\small\text{$-1\le i<k-1$}}\\%
    \lambda_{k,k-1} = \lambda_{k-1,k-1} z_k^{t_k v_{k-1,k-1}+s_{k}}\\%
    \lambda_{k,k} = q_k
    z_k^{1-s_k-\ell_k}P_k'(z_k)\lambda_{k,k-1}^{q_k
      \ell_k}
  \end{cases}
\end{equation}
where $q_ks_k-m_k t_k=1$, $0\le t_k< q_k$ and $z_k=Z_k\mod P_k$.


\section{From minimal polynomials to approximate roots}\label{sec:psi}

Given $\Phi=(\phi_{-1},\ldots,\phi_k)$ and $F=\sum f_B \Phi^B$ the
$\Phi$-adic expansion of $F$, the updated lists $V$ and $\Lambda$
allow to compute in an efficient way the \edgepoly{} $\bar H_k$ using
 formulas \eqref{eq:wj} and \eqref{eq:barHk}. Unfortunately, we
do not know a way to compute the minimal polynomials $\phi_k$ in our
aimed complexity bound: the computation of the
$y^{N_k-1}$ coefficient of $G_k$ up to some suitable
precision might cost $\Omega(\dy\,\vF)$ as explained in
Section \ref{sec:ARNP}.

We now show that the main conclusions of all previous results remain
true if we replace $\phi_k$ by the $N_k^{th}$-approximate root
$\psi_k$ of $F$, with the great advantage that these approximate roots
can be computed in the aimed complexity (see Section
\ref{sec:comp}). Up to our knowledge, such a strategy was introduced
by Abhyankar who developped in \cite{Ab89} an irreducibility criterion
in $\algclos{\Ki}[[x,y]]$ avoiding any Newton-Puiseux type transforms.

\subsection{Approximate roots and main result}\label{ssec:approx}

\paragraph*{Approximate roots.} The approximate roots of a monic
polynomial $F$ are defined thanks to the following proposition:

\begin{prop}\label{prop:approx-root} $($see e.g. {\upshape
    \cite[Proposition 3.1]{Pop02}).} Let $F\in\Ai[y]$ be monic of
  degree $\dy$, with $\Ai$ a ring whose characteristic does not divide
  $\dy$. Let $N\in \Ni$ dividing $\dy$. There exists a unique
  polynomial $\psi\in \Ai[y]$ monic of degree $d/N$ such that
  $\deg(F-\psi^{N}) < d-d/N$. We call it the $N^{th}$ approximate
  roots of $F$.
\end{prop}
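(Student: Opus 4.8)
The plan is to construct $\psi$ coefficient by coefficient, exploiting that $\Ai$ is not required to be a field but that $N$ (and hence the relevant coefficients appearing in the binomial expansion of $\psi^N$) is invertible in $\Ai$, since the characteristic of $\Ai$ does not divide $\dy$ and $N \mid \dy$. Write $n := \dy/N = \deg(\psi)$ and seek $\psi = y^{n} + u_{n-1}y^{n-1} + \cdots + u_0$ with $u_j \in \Ai$ to be determined. I would expand $\psi^{N} = y^{\dy} + N u_{n-1} y^{\dy - 1} + (\text{terms of degree} < \dy - 1 \text{ depending only on } u_{n-1}) + \cdots$, and compare with $F = y^{\dy} + a_{\dy-1}y^{\dy - 1} + \cdots$. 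The requirement $\deg(F - \psi^{N}) < \dy - n$ means exactly that the coefficients of $y^{\dy - 1}, y^{\dy - 2}, \ldots, y^{\dy - n}$ in $F$ and in $\psi^N$ agree.

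The key observation is the triangular structure: for each $1 \le i \le n$, the coefficient of $y^{\dy - i}$ in $\psi^{N}$ equals $N u_{n-i}$ plus a polynomial expression in $u_{n-1}, u_{n-2}, \ldots, u_{n-i+1}$ only (the higher-index coefficients already fixed at previous stages). Indeed, to get a term of $y$-degree $\dy - i$ in a product of $N$ factors each of the form $y^{n} + \sum_{j<n} u_j y^{j}$, one either picks the monomial $u_{n-i} y^{n-i}$ from a single factor and $y^{n}$ from the other $N-1$ factors — contributing $N u_{n-i} y^{\dy - i}$ — or one picks lower-degree monomials $u_{j}$ with $j > n-i$ from at least two factors, which can only involve indices strictly larger than $n-i$. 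Hence, inductively on $i = 1, \ldots, n$, the matching condition in degree $\dy - i$ reads
\[
  N\, u_{n-i} + Q_i(u_{n-1}, \ldots, u_{n-i+1}) = a_{\dy - i},
\]
for a fixed polynomial $Q_i$ with coefficients in $\Zi$ (independent of $\Ai$), and since $N$ is invertible in $\Ai$ this determines $u_{n-i}$ uniquely in terms of the previously determined coefficients. This simultaneously proves existence (the construction never fails) and uniqueness (at each step $u_{n-i}$ is forced), while the coefficients $u_{n-i-1}, \ldots, u_0$ are left completely free and play no role in the condition $\deg(F - \psi^N) < \dy - n$ — which is consistent with the statement, as only the top $n$ coefficients of $\psi$ are pinned down.

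The main obstacle is purely bookkeeping: one must verify carefully that the "lower-order" contributions to the coefficient of $y^{\dy - i}$ genuinely involve only the $u_j$ with $j > n-i$, i.e. that the system is strictly triangular and that no coefficient $N$-multiple gets contaminated. This follows from a degree count on multinomial expansions as sketched above, but it is the one place where an off-by-one error would break the argument; I would isolate it as a short lemma on the $y$-degree $\dy - i$ part of $\psi^N$. Everything else — invertibility of $N$ from the characteristic hypothesis, and reading off that the $u_j$ for $j < n - i$ for all $i \le n$ (equivalently $j$ can be anything from $0$ to $n-1$ once $i$ ranges up to $n$, so actually all of $u_{n-1},\dots,u_0$ get determined) are determined — is immediate. (One should note: since $i$ runs from $1$ to $n$, in fact all coefficients $u_{n-1}, \ldots, u_0$ are uniquely determined, and the ``free'' coefficients remark above is vacuous; the content of the statement is precisely that these $n$ equations have a unique solution, which the invertibility of $N$ guarantees.)
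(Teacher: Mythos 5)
Your proof is correct: the coefficient of $y^{\dy-i}$ in $\psi^N$ is indeed $N u_{n-i}$ plus an integer polynomial in $u_{n-1},\dots,u_{n-i+1}$ only (any contribution from two or more non-leading monomials of degrees $n-i_1,\dots,n-i_r$ with $i_1+\dots+i_r=i$, $r\ge 2$, forces each $i_j\le i-1$), so matching the coefficients of $y^{\dy-1},\dots,y^{\dy-n}$ gives a strictly triangular system that determines $u_{n-1},\dots,u_0$ one after the other, proving existence and uniqueness at once. Note that the paper does not prove this statement itself: it cites \cite[Proposition 3.1]{Pop02}, and the constructions it actually works with are different from yours, namely the Tschirnhausen iteration $\tau_F(\phi)=\phi+a_{N-1}/N$ described right after the proposition (used in the proof of Theorem \ref{thm:HPsiPhi}), and, for computation, Proposition \ref{prop:approx}, which obtains $\psi$ as the reciprocal of the degree-$\dy/N$ truncation of the power series $N^{th}$ root of the reciprocal polynomial of $F$, yielding the $\M(\dy)$ cost via Newton iteration. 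Your direct elimination is essentially the classical Abhyankar--Moh argument; it is a perfectly valid and more elementary proof of the existence/uniqueness statement, though it would not by itself give the quasi-linear complexity the paper needs (irrelevant here). Two small points. First, your step ``$N$ is invertible in $\Ai$ since the characteristic of $\Ai$ does not divide $\dy$ and $N\mid\dy$'' is only valid under the intended reading of the hypothesis (e.g.\ $\Ai$ a $\Ki$-algebra with $\Ki$ of characteristic $0$ or $>\dy$, as in every use in the paper); for a literal arbitrary ring of characteristic $0$ such as $\Zi$ the integer $N$ need not be a unit and the approximate root need not exist, and what your argument (and \cite{Pop02}) really requires is $N\in\Ai^\times$. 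Second, the sentence claiming that $u_{n-i-1},\dots,u_0$ are ``left completely free'' is inconsistent with the rest of your argument; your own closing parenthetical gives the correct reading, namely that all of $u_{n-1},\dots,u_0$ are forced by the $n$ matching conditions, so that remark should simply be deleted.
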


A simple degree argument implies that $\psi$ is the
$N^{th}$-approximate root of $F$ if and only if the $\psi$-adic
expansion $\sum_{i=0}^N a_i \psi^i$ of $F$ satisfies $a_{N-1}=0$. For
instance, if $F=\sum_{i=0}^\dy a_i y^i$, the $\dy^{th}$ approximate
root coincides with the Tschirnhausen transform of $y$
\[
  \tau_F(y)=y+\frac{a_{\dy-1}}{\dy}.
\]
More generally, the $N^{th}$ approximate root can be constructed as
follows. Given $\phi\in \Ai[y]$ monic of degree $\dy/N$ and given
$F=\sum_{i=0}^N a_i \phi^i$ the $\phi$-adic expansion of $F$, we
consider the new polynomial
\[
  \tau_F(\phi):=\phi+\frac{a_{N-1}}{N}
\]
which is again monic of degree $\dy/N$. It can be shown that the
resulting $\tau_F(\phi)$-adic expansion $F=\sum a_i' \tau_F(\phi)^i$
satisfies $\deg(a_{N-1}')< \deg(a_{N-1})< \dy/N$ (see e.g. \cite[Proof
of Proposition 6.3]{Pop02}). Hence, after applying at most $\dy/N$
times the operator $\tau_F$, the coefficient $a_{N-1}'$ vanishes and
the polynomial $\tau_F\circ\cdots \circ \tau_F(\phi)$ coincides with
the approximate root $\psi$ of $F$. Although this is not the best
strategy from a complexity point of view (see Section \ref{sec:comp}),
this construction will be used to prove Theorem \ref{thm:HPsiPhi}
below.

\paragraph{Main result.} We still consider $F\in \Ki[[x]][y]$
Weierstrass of degree $\dy$ and keep notations from Section
\ref{sec:ARNP}. We denote $\psi_{-1}:=x$ and, for all $k=0,\ldots,g$,
we denote $\psi_k$ the $N_k^{th}$-approximate root of $F$. Fixing
$0\leq k\leq g$, we denote $\Psi=(\psi_{-1},\psi_0,\ldots,\psi_{k})$,
omitting once again the index $k$ for readibility.

Since $\deg \Psi=\deg \Phi$ by definition, the exponents of the
$\Psi$-adic expansion
\[
  F=\sum_{B\in \Bc} f_B' \Psi^B, \quad f_B'\in \Ki
\]
take their values in the same set $\Bc$ introduced in
\eqref{eq:Bc}. In the following, we denote by $w_i'\in\Ni$ the new
integer defined by \eqref{eq:wj} when replacing $f_B$ by $f'_B$ and we
denote $\bar{H}_k'$ the new polynomial obtained when replacing $w_i$
by $w_i'$ and $f_B$ by $f_B'$ in \eqref{eq:barHk}.

\begin{thm}\label{thm:HPsiPhi}
  We have $\bar{H}_k=\bar{H}'_k$ for $0\leq k< g$ and the \edgepoly{}s
  $\bar{H}_g$ and $\bar{H}'_g$ have same restriction to their Newton
  polygon's lowest edge.
\end{thm}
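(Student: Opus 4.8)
The plan is to compare the $\Phi$-adic and $\Psi$-adic expansions of $F$ by controlling how far $\psi_k$ deviates from $\phi_k$, and then to show this deviation is invisible to the valuation $v_k$ and to the boundary-polynomial extraction $\lambda_k$, \emph{as long as} the deviation has high enough $v_k$-value. The key elementary input is that $\psi_k$ and $\phi_k$ have the same degree $d_k$ and both are monic, and that $\psi_k$ is characterised by the vanishing of the $y^{N_k-1}$-coefficient in its $\psi_k$-adic expansion of $F$; so I would first establish that $v_k(\psi_k) = v_{k,k}$ and $\lambda_k(\psi_k) = \lambda_{k,k}$ by induction on $k$, using the $\tau_F$-construction of the approximate root recalled before the theorem. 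Concretely, the Tschirnhausen-type corrections $\tau_F(\phi)=\phi + a_{N-1}/N$ only modify $\phi$ by terms of strictly smaller degree in the $\phi$-adic sense, and one checks these corrections raise (or at worst preserve) the $v_k$-value of the relevant coefficients, so that the minimum defining $v_k(F)$, and the trailing coefficient defining $\lambda_k(F)$, are computed from the same exponents $B$ and with the same $\Lambda^B$; this is exactly the mechanism that makes Corollary~\ref{cor:main} go through verbatim with $\Psi$ in place of $\Phi$.

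Granting that, for $k<g$ the polynomial $H_k$ is degenerated, so its Newton polygon $\NP(H_k)$ is a \emph{single} straight edge; every lattice point $(i,w_i)$ with $f_B\neq 0$ lies on or above that edge, and the extremal points $i=0$ and $i=N_k$ are forced ($w_{N_k}=0$ with $\Bc(N_k,v_k(F))=\{B_0\}$, $f_{B_0}=1$, and the slope side pinned by degeneracy). I would argue that replacing $\phi_i$ by $\psi_i$ changes $f_B$ to $f_B'$ only for exponents $B$ whose $v_k$-value $\langle B,V\rangle$ is strictly larger than needed, so the monomials $x^{w_i}y^i$ appearing in $\bar H_k$ via \eqref{eq:barHk} have \emph{unchanged} coefficients $\sum_{B\in\Bc(i,w_i+v_k(F))}f_B\Lambda^{B-B_0}$; hence $\bar H_k=\bar H_k'$. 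For the last level $k=g$, $H_g$ need not be degenerated, so its Newton polygon may have several edges, and the perturbation $\psi_g$ vs.\ $\phi_g$ can now land with exactly the critical $v_g$-value on higher edges — it can disturb coefficients away from the \emph{lowest} edge (the one of smallest slope, adjacent to $i=0$), but not on it, because on the lowest edge the same extremality/pinning argument via $B_0$ and the Weierstrass condition $w_0>0$, $w_{N_g}=0$ still isolates the contributing exponents with minimal value. This gives equality of $\bar H_g$ and $\bar H_g'$ restricted to that lowest edge.

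The main obstacle I expect is the bookkeeping in the induction proving $v_k(\psi_k)=v_{k,k}$ and $\lambda_k(\psi_k)=\lambda_{k,k}$: one must show that each application of $\tau_F$ in building $\psi_k$ from the ``previous'' approximate root only adds terms of $v_k$-value $\ge v_{k,k}$ with \emph{no} contribution to the trailing coefficient at the critical value, i.e.\ that the corrections $a_{N-1}/N$ (and their iterates) genuinely have $v_k$-value strictly above the relevant threshold. This is where one needs Proposition~\ref{prop:key} (the freeness of the $\Lambda^B$) to rule out accidental cancellations that would lower a valuation, together with the degree-drop estimate $\deg(a_{N-1}')<\deg(a_{N-1})$ recalled from \cite{Pop02} to guarantee the process terminates before the threshold is reached. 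A secondary subtlety, specific to $k=g$, is to pin down precisely which edge is ``safe'': I would phrase it as the edge incident to the vertex $i=0$ (equivalently, the one realising $\min_i w_i/ (\text{something})$ on the side where the approximate-root perturbation cannot reach the minimal $v_g$-value), and verify that Theorems~\ref{thm:NPphi}–\ref{thm:EdgePoly} localise cleanly to it.
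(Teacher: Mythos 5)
Your overall plan is the same as the paper's (control the deviation $\psi_k-\phi_k$, show it is invisible to $v_k$ and $\lambda_k$, and use straightness of $\NP(H_k)$ for $k<g$), but the step that carries all the weight is missing. Establishing $v_k(\psi_k)=v_{k,k}$ and $\lambda_k(\psi_k)=\lambda_{k,k}$ is \emph{not} sufficient: what is actually needed is the sharper bound $v_k(\psi_k-\phi_k)>v_k(\phi_k)+m_{k+1}/q_{k+1}$ (Lemma \ref{lem:vpsiphi}). The margin $m_{k+1}/q_{k+1}$ is precisely what guarantees that $\pi_k^*(\psi_k)=x^{v_{k,k}}(y+x^{\alpha}\tilde U)\,U$ with $\alpha>m_{k+1}/q_{k+1}$, so that when one raises to powers in the $\psi_k$-adic expansion of $F$ (as in \eqref{eq:ajpsij}) the only point of the support of $\pi_k^*(a_i\psi_k^i)$ on the lowest line of slope $-m_{k+1}/q_{k+1}$ is $(i,v_k(a_i\psi_k^i))$. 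If $\alpha$ were merely positive, the terms of lower degree in $y$ coming from $(y+x^\alpha\tilde U)^i$ could land on or below the critical edge, and both the $w_i'$ and the boundary coefficients could differ from $w_i$ and \eqref{eq:barHk}. Moreover, the tools you propose for this threshold — Proposition \ref{prop:key} together with the degree-drop of $\tau_F$ — do not yield the inequality. The paper's proof rests on a structural fact you never invoke: by the construction \eqref{eq:Hk}, $H_k$ is Weierstrass of degree $N_k$ \emph{with no term of degree $N_k-1$}, so the point attached to $i=N_k-1$ must lie strictly above the lowest edge $\Delta$; via Corollary \ref{cor:main} and Lemma \ref{lem:FPhi} this gives exactly $v_k(a_{N_k-1})>v_k(\phi)+m_{k+1}/q_{k+1}$, which is what makes the threshold stable under each application of $\tau_F$. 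Without this input there is no reason the corrections $a_{N-1}/N$ stay above the threshold, and your induction does not close.

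Two further points. Your assertion that passing from $\Phi$ to $\Psi$ changes $f_B$ into $f_B'$ ``only for exponents of non-minimal value'' is stated without any mechanism; the paper instead works with the $\psi_k$-adic expansion $F=\sum_i a_i\psi_k^i$ and applies Corollary \ref{cor:gprime} to the coefficients $a_i$ (which have degree $<d_k$), and this is what legitimises comparing the coefficient sums $\sum_{B}f_B\Lambda^{B-B_0}$ and $\sum_B f'_B\Lambda^{B-B_0}$ on the critical sets. Finally, for $k=g$ you identify the wrong edge: the lowest edge is the one containing $(N_g,0)$, of slope $-m_{g+1}/q_{g+1}$ (the least steep edge), not the edge adjacent to $i=0$; it is protected exactly because the perturbation exponent $\alpha$ exceeds $m_{g+1}/q_{g+1}$, while nothing is claimed (nor true in general) for the steeper edges.
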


In other words, Theorems \ref{thm:NPphi} and \ref{thm:EdgePoly}
hold when replacing minimal polynomials by approximate roots, up to a
minor difference when $k=g$ that has no impact for degeneracy tests.

\paragraph{Intermediate results.} The proof of Theorem
\ref{thm:HPsiPhi} requires several steps.  We denote by
$-m_{g+1}/q_{g+1}$ the slope of the lowest edge of $H_g$.
\begin{lem}\label{lem:vpsiphi}
  We have $v_k(\psi_k-\phi_k) > v_k (\phi_k)+m_{k+1}/q_{k+1}$ for all
  $k=0,\ldots,g$.
\end{lem}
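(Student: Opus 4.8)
The plan is to compare the approximate root $\psi_k$ with the minimal polynomial $\phi_k$ by studying how close $\psi_k$ comes to vanishing on the truncated Puiseux parametrisation. The key observation is that $\psi_k$ is characterised by the vanishing of the $y^{N_k-1}$-coefficient in its $\psi_k$-adic expansion, while $\phi_k$ is characterised (via Lemma~\ref{lem:PikPhi}) by actually vanishing on $(\mu_k x^{e_k}, S_k(x))$. So I would first estimate $v_k(\psi_k)$ from the fact that $\psi_k$ and $\phi_k$ have the same degree $d_k$ and both are monic: writing $\psi_k=\phi_k+(\psi_k-\phi_k)$ with $\deg(\psi_k-\phi_k)<d_k$, and expanding $\psi_k-\phi_k$ in the $\Phi$-adic basis at rank $k-1$, I would use Corollary~\ref{cor:main} together with the valuations $V$ at rank $k-1$ to bound $v_k(\psi_k-\phi_k)$ from below. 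The point is that every monomial appearing in this difference has $b_{k-1}$-degree in $\phi_{k-1}$ strictly less than $q_k\ell_k$, which translates (exactly as in \eqref{eq:qkNB}) into a valuation lower bound.

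\textbf{Carrying this out.} Concretely, since $\psi_k-\phi_k$ has degree $<d_k=\deg(\phi_{k-1})q_k\ell_k$ in $y$, its $\phi_{k-1}$-adic expansion $\sum_{j<q_k\ell_k} b_j(x,\ldots)\phi_{k-1}^j$ involves only exponents with $b_{k-1}=j<q_k\ell_k$. For each such term, Corollary~\ref{cor:main} (applied with the valuation $v_k$, i.e. the data $V$ at rank $k$) gives $v_k$ of that term $\geq \langle B,V\rangle$ for the relevant $B$'s; the minimum over admissible $B$ is controlled by the fact that the $\phi_k$-term (with $b_{k-1}=q_k\ell_k$, which would give $v_k(\phi_k)=v_{k,k}=q_k\ell_k v_{k,k-1}$) is \emph{excluded}, so we lose at most one ``unit'' of $v_{k,k-1}$ in each of the $q_k$ sheets, but gain it back through the integrality/$s_k,t_k$ bookkeeping. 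The resulting estimate should read $v_k(\psi_k-\phi_k)\geq v_{k,k}-v_{k,k-1}+\lceil\text{something}\rceil$, and comparing with $m_{k+1}/q_{k+1}$ — which is bounded by the slope of the lowest edge of $H_k$ and hence controlled by the edge data already computed — yields the strict inequality. For $k<g$ the polygon of $H_k$ is a single straight edge, so $m_{k+1}/q_{k+1}$ here is just $m_{k+1}/q_{k+1}$ for the \emph{next} step, which is strictly smaller than the ``room'' created by excluding the top $\phi_k$-term; for $k=g$, $-m_{g+1}/q_{g+1}$ is the slope of the lowest edge of the possibly non-degenerate $H_g$, which is handled the same way.

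\textbf{Alternative cleaner route.} A slicker argument: since $\psi_k$ is the $N_k^{th}$ approximate root, its $\psi_k$-adic (equivalently, truncated-$\phi_k$-adic) expansion of $F$ has no $y^{N_k-1}$ term, mirroring the defining property of $\phi_k$ relative to $H_k$. One can track, through the construction $\psi_k=\tau_F\circ\cdots\circ\tau_F(\phi_{k-1})$ recalled before Proposition~\ref{prop:approx-root}, that each application of $\tau_F$ modifies $\phi_{k-1}$-adic coefficients by adding a quantity whose $v_k$-valuation is at least $v_{k,k}-v_{k,k-1}$ (because $\deg_y a_{N-1}'<d_k/N_k$ strictly at each step forces the correction to live ``below the diagonal'' of the relevant Newton polygon). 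Accumulating these corrections and using that the polygon's lowest slope strictly dominates $-m_{k+1}/q_{k+1}$ gives the bound.

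\textbf{Main obstacle.} The delicate point is the exact bookkeeping of the floor/ceiling terms $n=\lceil t_k w/q_k\rceil$ that already appeared in the proof of Proposition~\ref{prop:key}: I must show that excluding the single exponent $b_{k-1}=q_k\ell_k$ genuinely buys a valuation gain \emph{strictly} exceeding $m_{k+1}/q_{k+1}$, not merely $\geq$. This requires knowing that $m_{k+1}/q_{k+1}<v_{k,k-1}$ (or the appropriate analogue), which should follow from the fact that the next Newton polygon $\NP(H_k)$ lives on $[0,N_k]\times\Ni$ with $w_{N_k}=0$, so its slopes are bounded by $w_0/N_k$, and $w_0$ in turn is bounded via Corollary~\ref{cor:res} (or the relation $v_k(F)=N_k v_{k,k}$ from Lemma~\ref{lem:FPhi}) in terms of quantities strictly smaller than $N_k v_{k,k-1}$. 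Once this strict comparison is pinned down, the lemma follows; I expect the rest to be routine valuation arithmetic using Lemma~\ref{lem:vLambda} and Corollary~\ref{cor:main}.
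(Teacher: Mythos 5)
Your main route has a genuine gap, and it is fatal: you propose to bound $v_k(\psi_k-\phi_k)$ from below using only that $\psi_k-\phi_k$ is a polynomial of degree $<d_k$, expanded in the $\Phi$-adic basis with exponents satisfying $b_{k-1}<q_k\ell_k$ (and $b_k=0$). No such bound exists. The exponent $B=(0,\ldots,0)$ is admissible, so the minimum of $\langle B,V\rangle$ over the allowed set is $0$; concretely, $\phi_k+1$ is monic of degree $d_k$ and $v_k\bigl((\phi_k+1)-\phi_k\bigr)=0$, so any argument using only monicity and the degree of the difference must fail. The lemma is a statement about the approximate root \emph{of $F$}: the defining property (vanishing of the coefficient of $y^{N_k-1}$ in the $\psi_k$-adic expansion of $F$) has to enter quantitatively, and your ``carrying this out'' paragraph never uses it. Moreover the estimate you aim for, $v_k(\psi_k-\phi_k)\ge v_{k,k}-v_{k,k-1}+\lceil\cdot\rceil$, would in any case be weaker than the target $v_{k,k}+m_{k+1}/q_{k+1}$, and your ``main obstacle'' (establishing $m_{k+1}/q_{k+1}<v_{k,k-1}$) is not where the strictness comes from.

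Your alternative route (tracking the Tschirnhausen iteration) is the right direction and is essentially what the paper does, but the details are off and the key mechanism is missing. The iteration must start from a monic polynomial of degree $d_k$ (e.g.\ $\phi_k$ itself), not $\phi_{k-1}$, and the statement to propagate is exactly the inequality of the lemma: one shows that $v_k(\phi-\phi_k)>v_k(\phi_k)+m_{k+1}/q_{k+1}$ implies the same for $\tau_F(\phi)=\phi+a_{N_k-1}/N_k$, which reduces to proving $v_k(a_{N_k-1})>v_k(\phi_k)+m_{k+1}/q_{k+1}$ for the $\phi$-adic coefficient $a_{N_k-1}$ of $F$. The strictness comes from Abhyankar's trick, namely that $H_k$ is Weierstrass \emph{with no term of degree $N_k-1$}: using the hypothesis on $\phi$ together with Corollary \ref{cor:main} and Lemma \ref{lem:PikPhi} to write $\pi_k^*(\phi)=x^{v_k(\phi)}(y+x^{\alpha}\tilde U)U_{k,k}$ with $\alpha>m_{k+1}/q_{k+1}$, the point at abscissa $N_k-1$ coming from $a_{N_k-1}$ must lie strictly above the lowest edge of $\NP(H_k)$, which is precisely the required inequality; the base case $\phi=\phi_k$ is handled via Theorem \ref{thm:NPphi} and Lemma \ref{lem:FPhi} (with $k=g$, $N_g=1$ treated separately, where the coefficient vanishes). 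Your claimed per-step gain ``at least $v_{k,k}-v_{k,k-1}$'' is again justified only by a degree drop, which gives no valuation information, and corrections do not ``accumulate'': the valuation of a sum is only bounded below by the minimum, so even if that per-step bound held it would not reach $v_{k,k}+m_{k+1}/q_{k+1}$. As written, the proposal does not constitute a proof.
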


\begin{proof}
  Let $(q,m)=(q_{k+1},m_{k+1})$. The lemma is true if $\psi_k=\phi_k$
  and $\psi_k$ is obtained after successive applications of the
  operator $\tau_F$ to $\phi_k$. It is thus sufficient to prove
  \[
    v_k(\phi-\phi_k)> v_k (\phi_k)+m/q \Longrightarrow
    v_k(\tau_F(\phi)-\phi_k) > v_k (\phi_k)+m/q
  \]
  for any $\phi\in \Ki[[x]][y]$ monic of degree $d_k$. Suppose given
  such a $\phi$ and consider the $\phi$-adic expansion
  $F=\sum_{j=0}^{N_k} a_j \phi^{j}$. Then this implication holds
  if and only if
  \begin{equation}\label{eq:vkaNk}
    v_k(a_{N_k-1})>v_k (\phi_k)+m/q.
  \end{equation}  
  \noindent
  $\bullet$ \emph{Case $\phi=\phi_k$.} As $\phi_0=\psi_0=y+c_0$, we do
  not need to consider the case $k=0$. Let $k\ge 1$. Theorem
  \ref{thm:NPphi} and Lemma \ref{lem:FPhi} give
  $v_k(a_{N_k-1})\geq v_k(F)+m/q=N_k\,v_k(\phi_k)+m/q$. Note that
  $v_k(\phi_k)>0$ when $k\ge 1$ by construction. We are thus done when
  $N_k>1$. But $N_k=1$ means $k=g$ and $H_g=y$, so that
  $v_g(a_0)=\infty$. The claim follows.

  \noindent
  $\bullet$ \emph{Case $\phi\neq \phi_k$.} First note that
  $v_k(\phi-\phi_k) > v_k(\phi_k)$ implies $v_k(\phi) =v_k(\phi_k)$.
  As $\deg(\phi-\phi_k)<d_k$, we deduce from Corollary \ref{cor:main}
  (applied to $G=\phi-\phi_k$ and $i=0$) and Lemma \ref{lem:PikPhi}
  that
  \[
    \pi_k^*(\phi)=\pi_k^*(\phi-\phi_k)+\pi_k^*(\phi_k)=x^{v_k(\phi)}\,(y
    + x^{\alpha} \Ut)\,U_{k,k}
  \]
  where $\alpha:=v_k(\phi-\phi_k)-v_k(\phi_k)>m/q$ (hypothesis) and
  for some unit $U\in \Ki_k[[x,y]]^\times$. As $a_i$ has also
  degree $<d_k$, we deduce again from Corollary \ref{cor:main} that
  when $a_i\neq 0$,
  \begin{equation}\label{eq:ajphij}
    \pi_k^*(a_i\phi^i)=x^{\alpha_i}\,(y + x^{\alpha} U)^i\,U_i ,
  \end{equation}
  where $\alpha_i:=v_k(a_i\phi^i)$ and $U_i\in \Ki[[x,y]]^\times$.  As
  $\alpha>m/q$, this means that the lowest line with slope $-q/m$
  which intersects the support of $\pi_k^* (a_i\phi^{i})$ intersects
  it at the unique point $(i,\alpha_i)$. Since
  $ \pi_k^*(F)=\sum_{i=0}^{N_k} \pi_k^* (a_i\phi^{i})$, we deduce that
  the edge of slope $-q/m$ of the Newton polygon of $\pi_k^*(F)$
  coincides with the edge of slope $-q/m$ of the lower convex hull of
  $\left((i,\alpha_i)\;; a_i\neq 0, 0\leq i\leq N_k\right)$. 
    Thanks to
  \eqref{eq:pikHk} combined with $v_k(F)=N_k v_{k}(\phi_k)$ (Lemma
  \ref{lem:FPhi}) and $v_k(\phi_k)=v_k(\phi)$ (hypothesis), we deduce
  that the lowest edge $\Delta$ of $H_k$ (with slope $-q/m$) coincides
  with the edge of slope $-q/m$ of the lower convex hull of the points
  $\left((i,v_k(a_i)+(i-N_k)\,v_k(\phi))\;; a_i\neq 0, 0\leq i\leq
    N_k\right)$.  Since $H_k$ is monic of degree $N_k$ \textit{with no
    terms of degree $N_{k}-1$}, we deduce that $(N_k,0)\in \Delta$
  while $(N_{k}-1,v_k(a_{N_k-1})-v_k(\phi))$ must lie above
  $\Delta$. It follows that
  $ m N_k < m(N_{k}-1)+q (v_k(a_{N_k-1})-v_k(\phi)), $ leading to the
  required inequality $v_k(a_{N_k-1}) > v_k(\phi)+m/q$.  The lemma is
  proved.
\end{proof}
\begin{prop}\label{prop:psi}
  We have $v_k(\Psi)=v_k(\Phi)$ and $\lambda_k(\Psi)=\lambda_k(\Phi)$
  for all $k=0,\ldots,g$.
\end{prop}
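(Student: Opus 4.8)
The plan is to prove the two equalities $v_k(\psi_i)=v_k(\phi_i)$ and $\lambda_k(\psi_i)=\lambda_k(\phi_i)$ componentwise, for $-1\le i\le k$. For $i=-1$ there is nothing to do since $\psi_{-1}=\phi_{-1}=x$. For $0\le i\le k$, the key input is Lemma \ref{lem:vpsiphi} applied at rank $i$ (with the valuation $v_i$), which gives $v_i(\psi_i-\phi_i)>v_i(\phi_i)+m_{i+1}/q_{i+1}$; in particular $v_i(\psi_i-\phi_i)>v_i(\phi_i)$, hence $v_i(\psi_i)=v_i(\phi_i)$ and $\lambda_i(\psi_i)=\lambda_i(\phi_i)$. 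So the statement holds for $k=i$, and the remaining task is to propagate it from rank $i$ up to rank $k$ via the transforms $\sigma_{i+1},\ldots,\sigma_k$.

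First I would handle the case $i=k$ directly from the argument just given. Then, for $i<k$, I would argue that since $\deg(\psi_i-\phi_i)<d_i$, Corollary \ref{cor:main} (applied with $G=\psi_i-\phi_i$ and $i$-th coordinate equal to $0$) together with Lemma \ref{lem:PikPhi} yields, exactly as in the proof of Lemma \ref{lem:vpsiphi},
\[
  \pi_k^*(\psi_i)=\pi_k^*(\phi_i)+\pi_k^*(\psi_i-\phi_i)=x^{v_{k,i}}\,U_{k,i}\,\bigl(1+x^{\beta}\Ut\bigr)
\]
where $\beta:=v_k(\psi_i-\phi_i)-v_k(\phi_i)$ and $\Ut\in\Ki_k[[x,y]]$, provided we know $\beta>0$. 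The point is that $\beta>0$ follows by applying the pull-backs $\sigma_{i+1}^*,\dots,\sigma_k^*$ to the strict inequality $v_i(\psi_i-\phi_i)>v_i(\phi_i)$: since $\pi_k=\pi_i\circ\sigma_{i+1}\circ\cdots\circ\sigma_k$ and each $\sigma_j$ multiplies $x$-valuations of $\pi_{j-1}^*(-)$ by the positive integer $q_j$ (this is exactly the mechanism behind Lemma \ref{lem:vLambda}, parts 1–2), the strict inequality is preserved and in fact $v_k(\psi_i-\phi_i)-v_k(\phi_i)=e_{k}/e_{i}\cdot\bigl(v_i(\psi_i-\phi_i)-v_i(\phi_i)\bigr)>0$. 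Granting $\beta>0$, the displayed identity shows $v_k(\psi_i)=v_{k,i}=v_k(\phi_i)$ and, taking trailing coefficient in $y$ at $x=0$, $\lambda_k(\psi_i)=U_{k,i}(0,0)=\lambda_{k,i}=\lambda_k(\phi_i)$, which is what we want.

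The main obstacle is the clean bookkeeping of how the transforms $\sigma_j$ act on $x$-valuations and on trailing coefficients when applied to a power series that is \emph{not} of the special form $x^{v}(y+\cdots)U$ covered by Lemma \ref{lem:PikPhi} — here $\psi_i-\phi_i$ is a general element of $\Ki[[x]][y]$ of degree $<d_i$. The way around this is precisely to not transform $\psi_i-\phi_i$ directly, but to use Corollary \ref{cor:main} to control $\pi_k^*(\psi_i-\phi_i)$ in one shot from its $\Phi$-adic (equivalently, monomial-in-$\phi$) data, exactly as was already done for $\pi_k^*(\phi-\phi_k)$ inside the proof of Lemma \ref{lem:vpsiphi}; the only genuinely new ingredient is the elementary observation that the ramification index $e_k/e_i$ converts the rank-$i$ strict inequality of Lemma \ref{lem:vpsiphi} into the rank-$k$ strict inequality $\beta>0$. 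Once that is in place, both equalities of the proposition drop out by reading off the leading $x$-power and its $y$-trailing coefficient.
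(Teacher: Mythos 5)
Your componentwise plan (transport Lemma \ref{lem:vpsiphi} from rank $i$ up to rank $k$, instead of the paper's induction on $k$ with separate cases $i=k$, $i=k-1$, $i<k-1$) is a legitimate variant, but the step that makes it work is wrong as you state it. The claim that ``each $\sigma_j$ multiplies $x$-valuations of $\pi_{j-1}^*(\cdot)$ by $q_j$'' contradicts the very lemma you cite: point 1 of Lemma \ref{lem:vLambda} gives $v_{k,k-1}=q_k v_{k-1,k-1}+m_k$, not $q_k v_{k-1,k-1}$. The action of $\sigma_j^*$ on valuations depends on the $y$-structure of the lowest-order part: since $\pi_i^*(\phi_i)=x^{v_{i,i}}\,y\,U_{i,i}$ (Lemma \ref{lem:PikPhi}), the step from rank $i$ to rank $i+1$ adds an extra $m_{i+1}$, so $v_{k,i}=\frac{e_k}{e_{i+1}}\left(q_{i+1}v_{i,i}+m_{i+1}\right)=\frac{e_k}{e_i}v_{i,i}+\frac{e_k}{e_{i+1}}m_{i+1}$, whereas the difference $\psi_i-\phi_i$, whose rank-$i$ pull-back is $x^{v_i(\psi_i-\phi_i)}$ times a unit (Corollary \ref{cor:main} with $b_i=0$), does have its valuation multiplied by exactly $q_j$ at each step. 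Hence your identity $\beta=\frac{e_k}{e_i}\left(v_i(\psi_i-\phi_i)-v_i(\phi_i)\right)$ is false, and the downgraded inequality $v_i(\psi_i-\phi_i)>v_i(\phi_i)$ that you actually propagate is genuinely insufficient: with integer valuations it only yields $v_k(\psi_i-\phi_i)\ge\frac{e_k}{e_i}(v_{i,i}+1)$, which exceeds $v_{k,i}$ only when $q_{i+1}>m_{i+1}$; for steep slopes ($m_{i+1}\ge q_{i+1}$) the argument breaks down. This is exactly the point where the margin $m_{i+1}/q_{i+1}$ in Lemma \ref{lem:vpsiphi} is indispensable.

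The conclusion $\beta>0$ is nonetheless true, and the repair uses the full strength of the inequality you quoted at the start: $v_k(\psi_i-\phi_i)=\frac{e_k}{e_i}\,v_i(\psi_i-\phi_i)>\frac{e_k}{e_i}\left(v_{i,i}+\frac{m_{i+1}}{q_{i+1}}\right)=\frac{e_k}{e_{i+1}}\left(q_{i+1}v_{i,i}+m_{i+1}\right)=v_{k,i}$, where the first equality needs the (routine) observation that a unit times a power of $x$ keeps this form under every $\sigma_j^*$, $j>i$. This is precisely the computation the paper performs in its case $i=k-1$ (there: $q_k\alpha>q_k v_{k-1,k-1}+m_k=v_{k,k-1}$); the paper then avoids transporting over several steps by handling $i<k-1$ through the induction hypothesis at rank $k-1$, for which pure multiplicativity genuinely holds. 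Once your propagation step is corrected in this way, the rest of your argument (reading off the valuation and the trailing coefficient from $\pi_k^*(\psi_i)=\pi_k^*(\phi_i)+x^{v_{k,i}+\beta}\cdot(\text{unit})$ with $\beta\ge 1$) does give both equalities; as written, however, the justification of $\beta>0$ is a genuine gap.
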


\begin{proof}
  We show this result by induction. If $k=0$, we are done since
  $\psi_0=\tau_F(y)=\phi_0$. Let us fix $1\le k\le g$ and assume that
  Proposition \ref{prop:psi} holds for all $k'< k$. We need to
  show that $v_k(\psi_i)=v_{k}(\phi_i)$ and
  $\lambda_k(\psi_i)=\lambda_{k}(\phi_i)$ for all $i\le k$. Case $i=k$
  is a direct consequence of Lemma \ref{lem:vpsiphi}. For $i=k-1$,
  there is nothing to prove if $\phi_{k-1}=\psi_{k-1}$. Otherwise,
  using the linearity of $\pi_{k-1}^*$, Corollary \ref{cor:main}
  (applied at rank $k-1$ with $G=\phi_{k-1}-\psi_{k-1}$ and $i=0$) and
  Lemma \ref{lem:vpsiphi} give
  $ \pi_{k-1}^*(\psi_{k-1})=\pi_{k-1}^*(\phi_{k-1})+x^{\alpha} \Ut $
  with $\alpha>v_{k-1}(\phi_{k-1})+m_{k}/q_{k}$ and
  $\Ut\in \Ki_{k-1}[[x,y]]^\times$. We deduce
  $ \pi_k^*(\psi_{k-1})=\pi_k^*(\phi_{k-1})+x^{q_k\alpha}U_\alpha $
  with $U_\alpha\in \Ki_k[[x,y]]^\times$ and
  $q_k\,\alpha>v_{k}(\phi_{k-1})$ using Lemma \ref{lem:vLambda}
  ($q_k\,v_{k-1,k-1}+m_k=v_{k,k-1}$). This forces
  $v_k(\psi_{k-1})=v_k(\phi_{k-1})$ and
  $\lambda_k(\psi_{k-1})=\lambda_k(\phi_{k-1})$. Finally, for $i<k-1$,
  as $\deg(\psi_i)<d_{k-1}$, Corollary \ref{cor:main} (applied at rank
  $k-1$ with $G=\psi_i$ and $i=0$) gives
  \[
    \pi_{k-1}^*(\psi_i)=x^{v_{k-1}(\psi_i)}\lambda_{k-1}(\psi_i) U_i =
    x^{v_{k-1}(\phi_i)}\lambda_{k-1}(\phi_i) U_i ,
  \]
  where $U_i(0,0)=1$ (second equality by induction). Applying
  $\sigma_k^*$ and using Lemma \ref{lem:vLambda}, we conclude in the
  same way $v_k(\psi_{i})=v_k(\phi_{i})$ and
  $\lambda_k(\psi_{i})=\lambda_k(\phi_{i})$.
\end{proof}

\begin{cor}\label{cor:gprime}
  Let $G$ of degree less than $d_k$ and $\sum g'_B \Psi^B$ its
  $\Psi$-adic expansion. Then
  \[
    v_k(G)=\min (\langle B, V \rangle , g_B'\ne 0)\quad and \quad
    \lambda_k(G)=\sum_{B\in \Bc(0,v_k(G))} g'_B \Lambda^B.
  \]
  In particular, if $G$ has $\Phi$-adic expansion $\sum g_B \Phi^B$,
  then $g_B=g_B'$ when $\langle B, V \rangle=v_k(G)$.
\end{cor}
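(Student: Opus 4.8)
The plan is to reduce Corollary \ref{cor:gprime} to Corollary \ref{cor:main} via Proposition \ref{prop:psi}. Since $G$ has degree less than $d_k$, its $\Psi$-adic expansion $\sum_{B} g'_B \Psi^B$ only involves exponents $B \in \Bc(0)$, i.e. with $b_k = 0$; thus $\pi_k^*(G) = \sum_B g'_B\, \pi_k^*(\Psi^B)$ by linearity of $\pi_k^*$. The key point is that the ``approximate-root analogue'' of Corollary \ref{cor:main} holds: writing $\Psi^{B}$ with $b_k=0$, we have $\pi_k^*(\psi_i) = x^{v_k(\psi_i)} U'_{k,i}$ for $-1 \le i \le k-1$ with $U'_{k,i} \in \Ki_k[[x,y]]^\times$ and $U'_{k,i}(0,0) = \lambda_k(\psi_i) = \lambda_k(\phi_i) = \lambda_{k,i}$, and $v_k(\psi_i) = v_k(\phi_i) = v_{k,i}$, where both equalities come from Proposition \ref{prop:psi}. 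Hence $\pi_k^*(\Psi^B) = x^{\langle B, V\rangle} (U')^B$ with $(U')^B(0,0) = \Lambda^B$, exactly as in the proof of Corollary \ref{cor:main}.

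Having established this, I would run the same argument as in Corollary \ref{cor:main}: set $w := \min\{\langle B, V\rangle : g'_B \ne 0\}$, and write
\[
  \pi_k^*(G) = \Bigl(\sum_{B \in \Bc(0,w)} g'_B \Lambda^B + R\Bigr) x^w,
  \qquad R \in \Ki_k[[x,y]],\ R(0,0) = 0.
\]
By Proposition \ref{prop:key} the family $(\Lambda^B : B \in \Bc(0,w))$ is free over $\Ki$ and each $\Lambda^B$ is nonzero, so $\sum_{B \in \Bc(0,w)} g'_B \Lambda^B \ne 0$ (there is at least one $B$ with $g'_B \ne 0$ achieving the minimum). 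This gives $v_k(G) = w$ and, since $G$ has degree $<d_k$ so $\pi_k^*(G)$ has no $y$-factor forced, $\lambda_k(G) = (\pi_k^*(G)/x^w)_{|x=0}$ evaluated as a trailing $y$-coefficient equals $\sum_{B \in \Bc(0,w)} g'_B \Lambda^B$ — here one uses that for $B \in \Bc(0)$ the term $\pi_k^*(\Psi^B)$ has trailing $y$-coefficient $x^{\langle B,V\rangle}\Lambda^B$ (no positive power of $y$ is forced since $b_k = 0$), matching the structure in Corollary \ref{cor:main}.

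For the final sentence, suppose $G$ also has a $\Phi$-adic expansion $\sum_B g_B \Phi^B$, again supported on $\Bc(0)$. Applying the genuine Corollary \ref{cor:main} to this expansion gives $v_k(G) = \min\{\langle B,V\rangle : g_B \ne 0\}$ and $\lambda_k(G) = \sum_{B \in \Bc(0, v_k(G))} g_B \Lambda^B$. Comparing with the formula just obtained for the $\Psi$-adic expansion, both $\sum_{B \in \Bc(0,v_k(G))} g_B \Lambda^B$ and $\sum_{B \in \Bc(0,v_k(G))} g'_B \Lambda^B$ equal $\lambda_k(G)$, so $\sum_{B \in \Bc(0,v_k(G))} (g_B - g'_B) \Lambda^B = 0$. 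By the freeness statement in Proposition \ref{prop:key}, applied with $w = v_k(G)$, this forces $g_B = g'_B$ for every $B$ with $\langle B, V\rangle = v_k(G)$, as claimed.

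The main obstacle is the first paragraph: one must be slightly careful that Proposition \ref{prop:psi} is exactly what is needed to transport the structural description of $\pi_k^*(\phi_i)$ from Lemma \ref{lem:PikPhi} to $\pi_k^*(\psi_i)$ — in particular that the unit $U'_{k,i}$ really exists with the right value at the origin. This is not automatic from $v_k(\psi_i) = v_{k,i}$ and $\lambda_k(\psi_i) = \lambda_{k,i}$ alone unless one knows $\pi_k^*(\psi_i)/x^{v_{k,i}}$ is a unit in $\Ki_k[[x,y]]$; but for $i < k$ this follows because $\psi_i$ has degree $<d_k \le d_{i+1}$ wait — more precisely, for $i \le k-1$ the proof of Proposition \ref{prop:psi} already exhibits $\pi_k^*(\psi_i) = x^{v_{k,i}} \lambda_{k,i} U_i$ with $U_i(0,0) = 1$ in the $i < k-1$ case and an analogous unit factorisation in the $i = k-1$ case, so the needed structure is available and one just has to quote it.
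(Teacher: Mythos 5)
Your proof is correct and follows essentially the same route as the paper: quote the unit factorisation $\pi_k^*(\psi_i)=x^{v_{k,i}}\lambda_{k,i}U_i$, $U_i(0,0)=1$, for $i<k$ (available from the proof of Proposition \ref{prop:psi}, or alternatively from Corollary \ref{cor:main} applied to the $\Phi$-adic expansion of $\psi_i$ together with Proposition \ref{prop:psi}), expand $\pi_k^*(G)$ by linearity, and invoke the freeness of Proposition \ref{prop:key} both to identify $v_k(G)$ and $\lambda_k(G)$ and to compare the $\Phi$- and $\Psi$-adic coefficients at the minimal level. The caveat you raise at the end — that $v_k(\psi_i)=v_{k,i}$ and $\lambda_k(\psi_i)=\lambda_{k,i}$ alone do not give the unit structure — is exactly the point the paper handles by citing the proof of Proposition \ref{prop:psi}, and your resolution matches it.
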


\begin{proof}
  As already shown in the proof of Proposition \ref{prop:psi}, from
  Corollary \ref{cor:main}, if $i<k$, we have
  $\pi_k^*(\psi_i)=x^{v_{k,i}}\lambda_{k,i}\,U_i$ with
  $U_i(0,0)=1$. As $\deg(G)< d_k$, we deduce
  \[
    \pi_k^*(G)=\sum g'_B \Lambda^B x^{\langle B, V \rangle} U_B
  \]
  with $U_B(0,0)=1$. This shows the result, using Proposition
  \ref{prop:key}.
\end{proof}

\paragraph{Proof of Theorem \ref{thm:HPsiPhi}.} Write
$F=\sum_i a_i\psi_k^i$ the $\psi_k$-adic expansion of $F$. Similarly
to \eqref{eq:ajphij}, when $a_i\neq 0$, Corollary \ref{cor:main} and
Lemma \ref{lem:vpsiphi} imply:
\begin{equation}
  \label{eq:ajpsij}
  \pi_k^*(a_i\psi_k^i)=x^{v_k(a_i\psi_k^i)}\, (y + x^{\alpha}\, \Ut)^i\,U,
\end{equation}
with $\alpha>m_{k+1}/q_{k+1}$, $U, \Ut\in \Ki_k[[x,y]]^\times$ and
$U(0,0)=\lambda_k(a_i\,\psi_k^i)$.  Applying the same argument than in
the proof of Lemma \ref{lem:vpsiphi}, we get that each point
$(i, w_i=N_k-i\,m_{k+1}/q_{k+1})$ of the lowest edge $\Delta$ of the
Newton polygon of $H_k$ (hence the whole polygon if $k<g$) is actually
$(i,v_k(a_i\,\psi_k^i)-v_k(F))$, that is $(i,w_i')$ from Corollary
\ref{cor:gprime} (applied to $G=a_i$) and Proposition
\ref{prop:psi}. This shows that we may replace $w_i$ by $w_i'$ in
\eqref{eq:wj}. More precisely, it follows from \eqref{eq:ajpsij} that
the restriction $\bar{H}_{k|\Delta}$ of $\bar{H}_k$ to $\Delta$ is
uniquely determined by the equality
\[
  \lambda_k(F)x^{v_k(F)}\bar{H}_{k|\Delta} = \sum_{(i,w_i')\in \Delta}
  \lambda_k(a_i\psi_k^i)x^{v_k(a_i\psi_k^i)} y^i.
\]
Using again Corollary \ref{cor:gprime} and Proposition \ref{prop:psi},
we get
\[
  \bar{H}_{k|\Delta} = \sum_{(i,w_i')\in \Delta}
  \left(\sum_{B\in\Bc(i,w_i'+v_k(F))} f_B' \Lambda^{B-B_0}\right)
  x^{w_i'} y^i,
\]
as required. \hfill$\square$

\begin{rem}
  Theorem \ref{thm:HPsiPhi} would still hold when replacing $\psi_k$
  by any monic polynomial $\phi$ of same degree for which
  $\pi_k^*(\phi)=x^{v_{k,k}}\,(y+\beta(x))\,U $ with
  $\val(\beta)> m_{k+1}/q_{k+1}$.
\end{rem}

\subsection{An Abhyankar type irreducibility test}

Theorem \ref{thm:HPsiPhi} leads to the following sketch of algorithm.
Subroutines \AppRoot{}, \Expand{} and \Edgepoly{} respectively compute
the approximate roots, the $\Psi$-adic expansion and the current lowest
\edgepoly{} (using \eqref{eq:wj} and \eqref{eq:barHk}). They are
detailed in Section \ref{sec:comp}. Also, considerations about
truncation bounds is postponed to Section \ref{ssec:trunc}. Given a
ring $\Li$ and $P\in \Li[Z]$, we denote by $\Li_P=\Li[Z]/(P(Z)$.

\begin{algorithm}[H]
  \nonl\TitleOfAlgo{\irreducible$(F, \Li)$\label{algo:irreducible}}%
  \KwIn{$F\in\Ki[[x]][y]$ Weierstrass with $\dy=\deg(F)$ not divisible
    by the characteristic of $\Ki$ ; $\Li$ a field extension of
    $\Ki$.}%
  \KwOut{\True{} if $F$ is irreducible in $\Li[[x]][y]$, and \False{}
    otherwise.}%
  $N\gets \dy$, $V\gets (1,0)$, $\Lambda\gets (1,1)$,
  $\Psi\gets(x)$\;%
  \While{$N > 1$}{%
    $\Psi\gets \Psi\cup \AppRoot{}(F,N)$\;%
    $\sum_{B} f_B \Psi^{B}\gets \Expand{}(F,\Psi)$\;%
    $\bar{H}\gets \Edgepoly{}(F,\Psi,V,\Lambda)$\;%
    \lIf{$\bar{H}$ is not degenerated over $\Li$}{\Return{\False}}%
    $(q,m,P,N)\gets \Edata(\bar{H})$\;%
    Update the lists $V,\Lambda$ using \eqref{eq:update}\;%
    $\Li\gets \Li_P$ }%
  \Return{\True}
\end{algorithm}

\begin{thm}\label{thm:Irr}
  Algorithm \irreducible{} returns the correct answer.
\end{thm}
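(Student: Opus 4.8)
The plan is to show that Algorithm \irreducible{} faithfully simulates the \NPA{} algorithm of Section \ref{ssec:Nksequence}, from which correctness follows via Proposition \ref{prop:NPA}. The key bridge is that at the entry of each \textbf{While} iteration, the local data $(N,V,\Lambda,\Psi,\Li)$ carried by the algorithm are exactly those attached to the current rank $k$ of the $N_k$-sequence: namely $N=N_k$, $\Li=\Ki_k$, $\Psi=(\psi_{-1},\psi_0,\ldots,\psi_k)$ the list of approximate roots, and $V,\Lambda$ the vectors $(v_k(\psi_{-1}),\ldots,v_k(\psi_k))$ and $(\lambda_k(\psi_{-1}),\ldots,\lambda_k(\psi_k))$. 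I would prove this by induction on $k$. The base case $k=0$ is immediate from the initialisation line together with Example \ref{xmp:k0} and the fact that $\psi_0=\tau_F(y)=\phi_0$ (noted in the proof of Proposition \ref{prop:psi}). For the inductive step, one appends $\psi_{k+1}$ via \AppRoot{} and the update formulas \eqref{eq:update} are, by definition, precisely the recursions of Lemma \ref{lem:vLambda} and Proposition \ref{prop:vkk}, so the invariant on $V,\Lambda$ is preserved once one knows the edge data extracted in this iteration equals $(q_{k+1},m_{k+1},P_{k+1},N_{k+1})$.

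The heart of the argument is therefore the claim that the polynomial $\bar H$ produced by \Edgepoly{}$(F,\Psi,V,\Lambda)$ correctly governs the degeneracy test. By Theorem \ref{thm:HPsiPhi}, when $k<g$ the polynomial $\bar H=\bar H_k'$ computed from the $\Psi$-adic expansion via \eqref{eq:wj} and \eqref{eq:barHk} coincides with the true \edgepoly{} $\bar H_k$ of $H_k$; hence ``$\bar H$ is degenerated over $\Li=\Ki_k$'' holds if and only if $H_k$ is degenerated, and in the affirmative case $\Edata(\bar H)$ returns the genuine edge data $(q_k,m_k,P_k,N_k)$ by Definition \ref{dfn:degeneracy} and equation \eqref{eq:quasihom}. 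When $k=g$, Theorem \ref{thm:HPsiPhi} only guarantees that $\bar H_g'$ and $\bar H_g$ agree on the lowest edge $\Delta$ of the Newton polygon; but degeneracy of $\bar H_g$ already requires $\NP(H_g)$ to be a single straight edge, so if $\bar H_g$ is non-degenerated then either $\NP(H_g)$ is not straight — detected identically on $\Delta$ from $\bar H_g'$, since the slope and endpoints of $\Delta$ are read off the $w_i'$ which equal the $w_i$ on $\Delta$ by the proof of Theorem \ref{thm:HPsiPhi} — or $\NP(H_g)$ is straight but its residual polynomial is not a prime power, and in that case $\bar H_g'=\bar H_{g|\Delta}$ carries exactly the same residual polynomial. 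Thus the test on $\bar H$ returns \False{} precisely when $H_g$ is non-degenerated, and conversely if it never returns \False{} then the $N_k$-sequence runs to $N_g=1$.

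It then remains to match termination. The \textbf{While} loop exits exactly when $N=1$; by the invariant this is $N_g=1$, and by Proposition \ref{prop:NPA} that is equivalent to $F$ being irreducible over $\Li$ — here $\Li$ is the final residue field $\Ki_g$, but irreducibility of a Weierstrass polynomial over $\Ki[[x]]$ is equivalent to $N_g=1$ regardless, and running the algorithm with $\Li\supseteq\Ki$ simply tests irreducibility in $\Li[[x]][y]$ as claimed in the output specification. Conversely, each iteration that does not return \False{} strictly decreases $N$ (since $N_k=q_k\ell_k N_{k+1}$ with $q_k\ell_k>1$, as recalled in Section \ref{ssec:Nksequence}), so the loop terminates after at most $g\le\log_2\dy$ iterations. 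Combining the two directions, \irreducible{}$(F,\Li)$ returns \True{} if and only if $F$ is irreducible in $\Li[[x]][y]$. The main obstacle in writing this cleanly is the $k=g$ edge case: one must argue carefully that the information lost between $\bar H_g$ and $\bar H_g'$ away from $\Delta$ is exactly the information irrelevant to the degeneracy decision, which is why the statement of Theorem \ref{thm:HPsiPhi} was phrased in terms of the lowest edge in the first place. $\hfill\square$
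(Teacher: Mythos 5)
Your argument is correct and follows essentially the same route as the paper, whose proof of this theorem is simply the combination of Theorems \ref{thm:NPphi}, \ref{thm:EdgePoly} and \ref{thm:HPsiPhi} with Proposition \ref{prop:NPA}; you merely spell out the induction on the loop invariant and the $k=g$ edge case that the paper dismisses with the remark that the discrepancy between $\bar H_g$ and $\bar H_g'$ away from the lowest edge ``has no impact for degeneracy tests.'' Your handling of that edge case (straightness and the residual polynomial are read off the lowest edge, where $w_i'=w_i$) is exactly the intended justification.
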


\begin{proof}
  This follows from Theorem \ref{thm:NPphi}, \ref{thm:EdgePoly} and
  \ref{thm:HPsiPhi}, together with Proposition \ref{prop:NPA}.
\end{proof}

Let us illustrate this algorithm on two simple examples.

\begin{xmp}\label{exKuo}
  Let $F(x,y)=(y^2-x^3)^2-x^7$. This example was suggested by Kuo who
  asked if we could show that $F$ is reducible in
  $\algclos{\Qi}[[x]][y]$ without performing Newton-Puiseux type
  tranforms. Abhyankhar solved this challenge in \cite{Ab89} thanks to
  approximate roots. Let us show that we can prove further that $F$ is
  reducible in $\Qi[[x]][y]$ without performing Newton-Puiseux type
  tranforms.

  \textit{Initialisation.} Start from $\psi_{-1}=x$, $N_0=\dy=4$,
  $V=(1,0)$ and $\Lambda=(1,1)$.

  \textit{Step k=0.} The $4^{th}$ approximate root of $F$ is
  $\psi_{0}=y$. So $H_0=F$ and we deduce from \eqref{eq:barHk} (see
  Exemple \ref{xmp:k0}) that $\bar{H}_0=(y^2-x^3)^2$. Hence, $F$ is
  degenerated with edge data $(q_1,m_1,P_1,N_1)=(2,3,Z_1-1,2)$ and we
  update $V=(2,3,6)$ and $\Lambda=(1,1,2)$ thanks to
  \eqref{eq:update}, using here $z_1=1\mod P_1$.

  \textit{Step k=1.} The $2^{nd}$ approximate root of $F$ is
  $\psi_1=y^2-x^3$ and $F$ has $\Psi$-adic expansion
  $F=\psi_1^2-\psi_{-1}^7$. We have $v_1(\psi_1^2)=2v_{1,1}=12$,
  $\lambda_1(\psi_1^2)=\lambda_{1,1}^2=4$ while
  $v_1(\psi_{-1}^7)=7v_{-1,1}=14$ and
  $\lambda_1(\psi_{-1}^7)=\lambda_{-1,1}^7=1$. We deduce from
  \eqref{eq:barHk} that $\bar{H}_1=y^2-\frac{1}{4} x^2$. As the
  polynomial $Z_2^2-\frac{1}{4}$ is reducible in
  $\Qi_{P_1}[Z_2]=\Qi[Z_2]$, we deduce that $F$ is reducible in
  $\Qi[[x]][y]$.
\end{xmp}

\begin{xmp}\label{ex:2}
  Consider $F=((y^2-x^3)^2+4 x^8)^2+x^{14}(y^2-x^3)$ (we
  assume that we only know its expanded form at first).

  \textit{Initialisation.} We start with $\psi_{-1}=x$, $N_0=\dy=8$,
  $V=(1,0)$ and $\Lambda=(1,1)$.

  \textit{Step k=0.} The $8^{th}$ approximate root of $F$ is
  $\psi_{0}=y$.  The monomials reaching the minimal values
  \eqref{eq:wj} in the $\Psi=(\psi_{-1},\psi_0)$-adic expansion of $F$
  are $\psi_0^8$, $-4\psi_{-1}^3 \psi_0^6$, $6\psi_{-1}^6 \psi_0^4$, $-4\psi_{-1}^9\psi_0^2$, $\psi_{-1}^{12}$ and we deduce from
  \eqref{eq:barHk} that $\bar{H}_0=(y^2-x^3)^4$. Hence,
  $(q_1,m_1,P_1,N_1)=(2,3,Z_1-1,4)$ and we update $V=(2,3,6)$ and
  $\Lambda=(1,1,2)$ thanks to \eqref{eq:update}, using here
  $z_1=1\mod P_1$.

  \textit{Step k=1.} The $4^{th}$ approximate root of $F$ is
  $\psi_1=y^2-x^3$ and we get the current $\Psi$-adic expansion
  $F=\psi_1^4+8\psi_{-1}^8 \psi_1^2+\psi_{-1}^{14}
  \psi_1+16\psi_{-1}^{16}$.  The monomials reaching the minimal values
  \eqref{eq:wj} are $\psi_1^4$, $8\psi_{-1}^8 \psi_1^2$,
  $16\psi_{-1}^{16}$ and we deduce from \eqref{eq:barHk} that
  $\bar{H}_1=(y^2+x^4)^2$. Hence $(q_2,m_2,P_2,N_2)=(1,2,Z_2^2+1,2)$
  and we update $V=(2,3,8,16)$ and $\Lambda=(1,1,2z_2,8 z_2)$ thanks
  to \eqref{eq:update}, where $z_2=Z_2\mod P_2$ and using the B\'ezout
  relation $q_2 s_2-m_2 t_2=1$ with $(s_2,t_2)=(1,0)$. Note that we
  know at this point that $F$ is reducible in $\algclos{\Qi}[[x]][y]$
  since $P_2$ has two distinct roots in $\algclos{\Qi}$.

  \textit{Step k=2.} The $2^{nd}$ approximate roots of $F$ is
  $\psi_2=(y^2-x^3)^2+4x^8$ and we get the current $\Psi$-adic
  expansion $F=\psi_2^2+\psi_{-1}^{14}\psi_1$.  The monomials reaching
  the minimal values \eqref{eq:wj} are $\psi_2^2$,
  $\psi_{-1}^{14}\psi_1$ and we deduce from \eqref{eq:barHk} that
  $\bar{H}_2=y^2+(32 z_2)^{-1} x$ (note that $z_2$ is invertible in
  $\Qi_{P_2}$).  Hence $\bar{H}_2$ is degenerated with edge data
  $(q_3,m_3,P_3,N_3)=(2,1,Z_3+(32 z_2)^{-1},1)$. As $N_3=1$, we deduce
  that $F$ is irreducible in $\Qi[[x]][y]$ ($g=3$ here).
\end{xmp}

\begin{rem}\label{rem}
  Note that for $k\ge 2$, we really need to consider the $\Psi$-adic
  expansion: the $(x,y,\psi_k)$-adic expansion is not enough to
  compute the next data. At step $k=2$ in the previous example, the
  $\psi_2$-adic expansion of $F$ is $F=\psi_2^2+a$ where
  $a=x^{14} y^2-x^{17}$. We need to compute $\val[2](a)$. Using the
  $\Psi$-adic expansion $a=\psi_{-1}^{14} \psi_1$, we find
  $\val[2](a)=14 \times 2+ 8 = 36$.  Considering the $(x,y)$-adic
  expansion of $a$ would have led to the wrong value
  $\val[2](x^{14} y^2)= \val[2](x^{17}) =34 < 36$.
\end{rem}

\subsection{Quasi-irreducibility}\label{ssec:PseudoIrr}
In order to perform a unique irreducibility test, we will rather relax
the degeneracy condition by allowing square-freeness of the involved
residual polynomial $P_1,\ldots,P_g$, and eventually check if $\Ki_g$
is a field. This leads to what we call a quasi-irreducibility test.
The fields $\Ki_k$'s become ring extensions of $\Ki$ isomorphic to a
direct product of fields and we have to take care of zero divisors.

Let $\Ai=\Li_0\oplus \cdots \oplus \Li_r$ be a direct product of
perfect fields. We say that a polynomial $H$ defined over $\Ai$ is
\textit{square-free} if its projections under the natural morphisms
$\Ai\to \Li_i$ are square-free (in the usual sense over a field). If
the polynomial is univariate and monic, this exactly means that its
discriminant is not a zero divisor in $\Ai$.

\begin{dfn}\label{def:quasideg}
  We say that a Weierstrass polynomial $F\in \Ai[[x]][y]$ is
  \emph{quasi-degenerated} if its \edgepoly{} has shape
  \[
    \bar{F}= \left(P\left(\frac{y^q}{x^{m}}\right)x^{m\deg(P)}\right)^N
  \]
  with $q,m$ coprime and $P\in \Ai[Z]$ monic, \emph{square-free} and
  satisfying $P(0)\in \Ai^\times$.
\end{dfn}

We abusively still call $P$ the \emph{residual polynomial} of $F$ and
$(q,m,P,N)$ the \emph{edge data} of $F$, with convention $(q,m)=(1,0)$
if the Newton polygon is reduced to a point.
  
\begin{dfn}\label{def:quasi}
  We call \PIrr{} the new algorithm obtained when replacing
  degenerated tests by quasi-degenerated tests in algorithm
  \irreducible. $F\in \Ki[[x]][y]$ Weierstrass is said
  quasi-irreducible over $\Li$ if \PIrr($F,\Li$) outputs \True{}.
\end{dfn}

As $P_k(0)\in \Ki_{k-1}^\times$ by assumption, $z_k$ is not a zero
divisor in $\Ki_k$. It follows straightforwardly that all results of
Section \ref{sec:phi} and Subsection \ref{ssec:approx} still hold when
considering quasi-degeneracy. In particular, algorithm \PIrr{} is
well-defined and Definition \ref{def:quasi} makes sense.

\begin{lem}\label{lem:quasiIrrvsIrr}
  A square-free monic polynomial $F\in \Ki[[x]][y]$ is irreducible
  over $\Ki$ if and only if it is quasi-irreducible and $\Ki_g$ is a
  field.
\end{lem}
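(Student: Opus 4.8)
The plan is to relate irreducibility of $F$ over $\Ki$ with irreducibility of $F$ over the algebraic closure $\algclos\Ki$, and then compare both the $\algclos\Ki$-run and the $\Ki$-run of the algorithm. First I would recall that, by Proposition \ref{prop:NPA} applied over $\algclos\Ki$, running \NPA{} on $F$ over $\algclos\Ki$ produces a sequence of edge data whose residual polynomials are linear (since over $\algclos\Ki$ every monic irreducible polynomial has degree $1$), so quasi-degeneracy and degeneracy coincide there. Hence \PIrr$(F,\algclos\Ki)$ behaves exactly like \irreducible$(F,\algclos\Ki)$, and $F$ is irreducible over $\algclos\Ki$ if and only if $F$ is quasi-irreducible over $\Ki$ \emph{and} all the residual polynomials $P_1,\dots,P_g$ split into distinct linear factors over $\algclos\Ki$, i.e. $\Ki_g\otimes_\Ki\algclos\Ki$ is a product of copies of $\algclos\Ki$. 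But more care is needed: quasi-irreducibility over $\Ki$ only guarantees that each $P_k$ is square-free over the ring $\Ki_{k-1}$, not that the run over $\algclos\Ki$ is straight. So the cleaner route is to compare the $\Ki$-run directly with the classical factorisation.

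The key observation is that running \PIrr$(F,\Ki)$ amounts to running \NPA{} on $F$ but allowing the ``residue ring'' $\Ki_k$ to be a product of fields rather than a field: each quasi-degenerate step with square-free $P_k$ corresponds, under the Chinese Remainder decomposition $\Ki_{k-1}=\prod_j \Li_j$ and the further factorisation of $P_k$ over each $\Li_j$, to running \NPA{} \emph{in parallel} along all the branches simultaneously, with the constraint that at each step the Newton polygon is straight with the \emph{same} slope $-m_k/q_k$ on every branch and the residual polynomial has no repeated factor. Thus \PIrr$(F,\Ki)$ outputs \True{} iff, for every branch, the corresponding \NPA{} run reaches $N_g=1$ \emph{and} all branches have identical edge data $(q_k,m_k)$ at every level and degrees matching up. Concretely: $F=\prod_{i=1}^s F_i$ with $F_i\in\Ki[[x]][y]$ irreducible; quasi-irreducibility forces all the $F_i$ to have the same characteristic exponents/edge data sequence (same $(q_k,m_k,\ell_k N_k$ structure$)$) and pairwise equal residual polynomials being coprime at each level, which I would argue recursively: the first edge-polynomial $\bar H_0$ must be a power of a square-free quasi-homogeneous polynomial, which by Definition \ref{dfn:degeneracy}-type analysis already forces $\NP(F_i)$ equal for all $i$, and then the further levels are controlled by applying the same reasoning to the $G_k$'s after the transforms \eqref{eq:Gk}. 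Then ``$\Ki_g$ is a field'' is exactly the statement $s=1$, because $\Ki_g=\prod_{i} \Ki_{g}^{(i)}$ where the factors are indexed by the irreducible components sharing the terminal residue field, and more precisely $\dim_\Ki \Ki_g = f_g$ counts the total residual degree while $s$ counts the components; $\Ki_g$ being a field singles out $s=1$.

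For the forward direction: if $F$ is irreducible over $\Ki$, then $F$ equals its own minimal polynomial $\phi_g$, so by Proposition \ref{prop:NPA} the \NPA{}-run reaches $N_g=1$; every intermediate $H_k$ is degenerated, hence \emph{a fortiori} quasi-degenerated (an irreducible $P_k$ is square-free and, being monic irreducible with $H_{k-1}$ Weierstrass having no term of degree $N_{k-1}-1$, one checks $P_k(0)\ne0$), and by Proposition \ref{prop:psi}/Theorem \ref{thm:HPsiPhi} the approximate-root version computes the same data; therefore \PIrr$(F,\Ki)$ outputs \True{}, and $\Ki_g=\Ki[Z_1,\dots,Z_g]/(P_1,\dots,P_g)$ is a genuine field of degree $f_g=\ell_1\cdots\ell_g$ over $\Ki$ since each $P_k$ is irreducible over $\Ki_{k-1}$. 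Conversely, suppose \PIrr$(F,\Ki)$ outputs \True{} and $\Ki_g$ is a field. Then each $P_k$ is square-free over $\Ki_{k-1}$ and, $\Ki_g$ being a field, each $\Ki_k=\Ki_{k-1}[Z_k]/(P_k)$ is a field, so each $P_k$ is in fact irreducible over $\Ki_{k-1}$; hence every $\bar H_k$ is the power of an irreducible quasi-homogeneous polynomial, i.e. every $H_k$ is genuinely degenerated, and the \PIrr{}-run coincides with a valid \irreducible{}/\NPA{}-run reaching $N_g=1$, so $F$ is irreducible over $\Ki$ by Proposition \ref{prop:NPA}. The main obstacle, and the step I would spend the most effort on, is the converse's reduction ``$P_k$ square-free over the product ring $\Ki_{k-1}$ together with $\Ki_g$ a field $\Rightarrow$ each $P_k$ irreducible over $\Ki_{k-1}$'': it requires tracking how the quasi-degenerate transform behaves over each factor of the product ring and arguing that a genuine splitting at some level $k<g$ would survive to produce zero divisors in $\Ki_g$, contradicting that $\Ki_g$ is a field — this needs the fact, implicit in Section \ref{ssec:Nksequence} run over product rings, that later residue rings are built as quotients over the earlier ones and the Chinese Remainder structure is never destroyed once created.
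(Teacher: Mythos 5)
Your third paragraph is essentially the paper's own (very terse) argument: the paper proves this lemma by simply observing that it follows from Definitions \ref{def:quasideg} and \ref{def:quasi} together with Theorem \ref{thm:Irr}, which is exactly the pair of implications you spell out. The forward direction is fine as stated, up to one small correction: the reason $P_k(0)\neq 0$ is not the absence of a term of degree $N_{k-1}-1$ in $H_{k-1}$ (that condition is perfectly compatible with $P_k=Z_k$), but square-freeness: if $P_k(0)=0$ then $P_k=Z_k$, so $\bar H_{k-1}=c\,y^{N_{k-1}}$, which forces $\NP(H_{k-1})$ to be a single point, hence $H_{k-1}=y^{N_{k-1}}$ with $N_{k-1}>1$, contradicting the square-freeness that $H_{k-1}$ inherits from $F$ through the transforms \eqref{eq:Gk} and \eqref{eq:Hk}.

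The step you flag as the ``main obstacle'' in the converse is in fact immediate and requires no tracking of the quasi-degenerate transforms over the factors of the product rings, nor any Chinese Remainder bookkeeping. Since every $P_j$ is monic in $Z_j$, the ring $\Ki_g=\Ki_k[Z_{k+1},\ldots,Z_g]/(P_{k+1},\ldots,P_g)$ is a nonzero free $\Ki_k$-module, so the structure map $\Ki_k\to\Ki_g$ is injective; hence if $\Ki_g$ is a field, each $\Ki_k$ is a finite-dimensional $\Ki$-algebra with no zero divisors, hence a field, and $\Ki_k=\Ki_{k-1}[Z_k]/(P_k)$ being a field forces $P_k$ to be irreducible over $\Ki_{k-1}$. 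Then every quasi-degenerated step is genuinely degenerated, the \PIrr{} run is an \irreducible{} run reaching $N_g=1$, and Theorem \ref{thm:Irr} (or Proposition \ref{prop:NPA}) concludes, exactly as you wrote. Finally, your second paragraph (running the algorithm ``in parallel'' along branches, all irreducible factors of $F$ sharing the same edge data, ``$\Ki_g$ a field iff $s=1$'') is both unnecessary for this lemma and nontrivial in itself; it is the ``balanced'' phenomenon the paper defers to \cite{PoWe19}. Since your final argument never actually uses it, it should be dropped rather than relied upon.
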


\begin{proof} This follows immediately from Definitions
  \ref{def:quasideg} and \ref{def:quasi} with Theorem
  \ref{thm:Irr}.
\end{proof}


\section{Complexity. Proof of Theorem \ref{thm:main}}\label{sec:comp}

\subsection{Complexity model}
\label{ssec:model}
We use the algebraic RAM model of Kaltofen \cite[Section 2]{Ka88},
counting only the number of arithmetic operations in our base field
$\Ki$.  Most subroutines are deterministic; for them, we consider the
worst case. However, computation of primitive elements in residue
fields uses a probabilistic algorithm of Las Vegas type, and we
consider then the average running time. We denote by $\M(d)$ the
number of arithmetic operations for multiplying two polynomials of
degree $d$. We use fast multiplication, so that $\M(d)\in \Ot(d)$ and
$d'\M(d)\le \M(d'd)$, see \cite[Section 8.3]{GaGe13}.  We use the
classical notations $\O()$ and $\Ot()$ that respectively hide constant
and logarithmic factors \cite[Chapter 25, Section 7]{GaGe13}.

$F$ being Weierstrass, we have the following result. As $\vF>0$, that
ensures in particular that e.g. $\vF\,\log(\dy)\in\Ot(\vF)$ (this will
be used several times in the following).
\begin{lem}\label{lem:d-delta}
 We have $\dy-1\leq\vF$. If $F$ is
  quasi-irreducible with residual degree $f$, then $(\dy-1)\,f\leq \vF$.
\end{lem}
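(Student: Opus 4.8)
The plan is to prove the two inequalities by relating the discriminant valuation to the ramification data produced by the Newton--Puiseux algorithm of Section~\ref{sec:ARNP}. Recall that $\vF=\val(\disc(F))$ and that, up to a unit, $\disc(F)=\Res_y(F,\partial_y F)$. If $F=\prod_{l}F_l$ is the factorisation of $F$ into irreducible Weierstrass polynomials over $\Ki$, then $\disc(F)=\prod_l \disc(F_l)\cdot\prod_{l\ne l'}\Res_y(F_l,F_{l'})$, so $\vF=\sum_l \vF[F_l]+2\sum_{l<l'}(F_l,F_{l'})_0$. Since each intersection multiplicity $(F_l,F_{l'})_0$ is a positive integer, it suffices to prove the first inequality $\deg(G)-1\le \vF[G]$ for $G$ \emph{irreducible} Weierstrass (the general case then follows by summing, because $\sum_l(\deg F_l-1)\le \sum_l \deg F_l-1=\dy-1$ when there is at least one factor, with the cross terms only helping). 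So I would first reduce to the irreducible case.

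For $G$ irreducible Weierstrass of degree $\dy$, I would use the standard formula for the discriminant valuation of an irreducible power series polynomial: writing $G(\mu x^{e},S(x))=0$ for a rational Puiseux parametrisation with ramification index $e$ and residual degree $f$ (so $\dy=ef$), one has $\vF[G]=(G,\partial_y G)_0 = \sum_{\zeta}(\text{conjugate branches})$, and a classical computation (see e.g.\ the conductor/different formula, or \cite[Section 3]{PoWe17}) gives
\[
  \vF[G] \;=\; \sum_{k=1}^{g} f_k\,(q_k-1)\,\Bigl(\text{something}\Bigr)\;\ge\; \dy - f,
\]
the point being that $\vF[G]$ is, via the different of the extension, at least $\dy-\gcd$-type quantity. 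Concretely I would invoke Corollary~\ref{cor:res}: $\vF[G]=\val(\disc(G))$ and $\disc(G)$ is (up to a unit) $\Res_y(G,\partial_y G)$, while $v_g(\partial_y G)$ can be read off from $\NP(H_g)=\{(1,0)\}$ and the chain rule for $\pi_g$. Since $\partial_y G$ has degree $\dy-1$ and $v_g$ is the valuation attached to the unique root, $\val(\Res_y(G,\partial_y G))/f = (G,\partial_y G)_0/f = v_g(\partial_y G) \ge \dy-1$ would give $\vF[G]\ge f(\dy-1)/\,?$; I need to be careful with the normalisation, but the upshot is the clean bound $v_g(\partial_y G)\ge \dy-1$ coming from the fact that $\partial_y G$ evaluated along the parametrisation has $x$-valuation at least $\dy-1$ (each of the $\dy-1$ other roots contributes a factor of positive $x$-order, and conjugates under the $e$-fold ramification contribute $e$ each, fibre over $f$ conjugate branches).

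More precisely, for $G$ irreducible with roots the Puiseux series $y_1,\dots,y_\dy$ (the $ef=\dy$ branches), $\partial_y G=\sum_{j}\prod_{l\ne j}(y-y_l)$, so evaluating at $y=y_1$ gives $\partial_y G(x,y_1)=\prod_{l\ne 1}(y_1-y_l)$, whose $x$-order is $\sum_{l\ne 1}v(y_1-y_l)$. Since $G$ is a Weierstrass polynomial all $y_l$ have positive $x$-order, hence $v(y_1-y_l)\ge 1/e$ for all $l\ne 1$ with \emph{at least} $e-1$ of them (the Galois conjugates over $\Ki((x))$ of $y_1$ itself, differing only by roots of unity in $x^{1/e}$) contributing exactly... well, $\ge 1/e$; summing the $\dy-1$ terms gives $x$-order $\ge (\dy-1)/e$, hence $(G,\partial_y G)_0=e\cdot\bigl(\text{$x$-order along one branch}\bigr)\cdot f \ge \dots$ — carefully, $(G,\partial_y G)_0=\sum_{\text{branches}}\ord_x(\partial_y G \text{ along that branch})= f\cdot e\cdot (\dy-1)/e = f(\dy-1)$. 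Thus $\vF[G]=(G,\partial_y G)_0\ge f(\dy-1)$, which simultaneously gives $\dy-1\le\vF[G]$ (as $f\ge 1$) and, in the quasi-irreducible-with-residual-degree-$f$ case, exactly the claimed $(\dy-1)f\le\vF$.

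The main obstacle I anticipate is getting the normalisation in Corollary~\ref{cor:res} and the "evaluation along all branches" count exactly right: one must be sure that $(G,\partial_y G)_0$ equals $\sum$ over the $f$ conjugate plane branches of $\ord_x$ of $\partial_y G$ along that branch, each such order being $e$ times the order in the uniformiser $x^{1/e}$, and that the $\dy-1$ differences $y_1-y_l$ really each have order $\ge 1/e$ in $x^{1/e}$ — i.e.\ that distinct roots of a Weierstrass polynomial over $\Ki[[x]]$ are congruent to $0$ but pairwise distinct only at order $\ge 1$ in $x^{1/e}$. This is where Weierstrass-ness ($a_i(0)=0$ for $i<\dy$, so every root has positive valuation) is essential, and it is exactly what makes the bound fail for non-Weierstrass $F$ (consistent with the $\Ot(\vF+\dy)$ caveat in the introduction). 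Once that counting lemma is pinned down, summing over irreducible factors and adding the non-negative resultant cross-terms finishes the proof, using that $\vF=\sum_l\vF[F_l]+2\sum_{l<l'}(F_l,F_{l'})_0\ge \sum_l(\deg F_l-1)\ge \dy-1$ and that in the quasi-irreducible case there is a single factor of residual degree $f$.
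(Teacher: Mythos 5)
Your computation for a single irreducible Weierstrass factor is correct and is close in spirit to the paper's own argument, which simply views $\vF$ as the sum of the $\dy(\dy-1)$ valuations of pairwise differences of Puiseux series, each at least $1/\dy$ by Weierstrass-ness. Two remarks, though. The lesser one: your reduction of the first inequality to the irreducible case is stated backwards. From $\vF\ge\sum_l(\deg F_l-1)$ and $\sum_l(\deg F_l-1)\le\dy-1$ nothing follows; with $r$ irreducible factors the per-factor bounds only give $\dy-r$, and you must use the cross terms quantitatively, $2\sum_{l<l'}(F_l,F_{l'})_0\ge r(r-1)\ge r-1$, to compensate the deficit. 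This is repairable, but ``the cross terms only helping'' is not a proof.

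The genuine gap is the second inequality. You assert that a quasi-irreducible $F$ with residual degree $f$ has a single irreducible factor; this is false. Quasi-irreducibility only requires the residual polynomials to be square-free, and $\Ki_g$ need not be a field (Lemma \ref{lem:quasiIrrvsIrr}); Example \ref{xmp:splitting} is quasi-irreducible with three irreducible factors over $\Qi$, of residual degrees $4,1,3$. For such an $F=\prod_{l=1}^{r}F_l$ with common ramification index $e$, residual degrees $f_l$, $\sum_l f_l=f$ and $\dy=ef$, your scheme yields only $\sum_l f_l(ef_l-1)=e\sum_l f_l^2-f$ plus cross terms bounded below by $r(r-1)$, which for $r\ge 2$ falls short of the target $(\dy-1)f=ef^2-f$; on Example \ref{xmp:splitting} ($\dy=16$, $e=2$, $f=8$) your guaranteed bound is $44+6=50$, far below $120$. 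Closing this gap along your route would require a nontrivial lower bound on the pairwise intersection multiplicities $(F_l,F_{l'})_0$ of the balanced factors, which you do not provide. The paper avoids factoring altogether: in the quasi-irreducible case all branches have ramification index $e=\dy/f$, so every Puiseux series of $F$ has valuation at least $1/e=f/\dy$, hence each of the $\dy(\dy-1)$ pairwise differences has valuation at least $f/\dy$ and $\vF\ge(\dy-1)f$; the same one-line argument with the trivial bound $1/\dy$ gives $\dy-1\le\vF$ for any Weierstrass $F$, with no discriminant factorisation needed.
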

\begin{proof}
  As $F$ is Weierstrass, all its Puiseux series have valuation at
  least $1/\dy$. Seeing $\vF$ as the sum of the valuations of the
  difference of the Puiseux series of $F$ concludes the first
  point. In the second case, the minimum valuation for the Puiseux
  series of $F$ becomes $f/\dy$ (just use the classical equality
  $\dy=e\,f$).
\end{proof}
\paragraph{Primitive representation of residue rings.}
The $\Ki$-algebra $\Ki_k$ is given inductively as a tower extension of
$\Ki$ defined by the radical triangular ideal
$(P_1(Z_1),\ldots,P_k(Z_1,\ldots,Z_k))$. It turns out that such a
representation does not allow to reduce a basic operation in $\Ki_k$
to $\Ot(f_k)$ operations over $\Ki$ (see \cite{PoWe17} for
details). To solve this problem, we compute a primitive representation
of $\Ki_k$, introducing the notation $\Ki_Q:=\Ki[T]/(Q(T))$.

\begin{prop}\label{prop:Prim}
  Let $Q\in \Ki[T]$ and $P\in \Ki_Q[Z]$ square-free, and assume that
  $\Ki$ has at least $(\deg_T(Q)\,\deg_Z(P))^2$ elements.  There
  exists a Las Vegas algorithm \Primitive{} that returns $(Q_1,\tau)$
  with $Q_1\in \Ki[W]$ square-free and
  $\tau:\Ki[T,Z]/(Q,P) \rightarrow \Ki[W]/(Q_1)$ an isomorphism. It
  takes an expected $\O((\deg_T(Q)\,\deg_Z(P))^{(\omega+1)/2})$
  operations over $\Ki$. Given $\alpha\in\Ki[T,Z]/(Q,P)$, one can
  compute $\tau(\alpha)$ in less than $\Ot(\deg_T(Q)^2\,\deg_Z(P))$.
\end{prop}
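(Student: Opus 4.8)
The plan is to build the primitive element in two independent moves and then argue complexity for each. Write $\delta_T=\deg_T(Q)$, $\delta_Z=\deg_Z(P)$ and $\delta=\delta_T\delta_Z$, so that the $\Ki$-algebra $\Ai:=\Ki[T,Z]/(Q,P)$ has $\Ki$-dimension $\delta$. Since $Q$ is square-free, $\Ki[T]/(Q)$ is a product of fields; since $P$ is square-free over that product in the sense of Subsection \ref{ssec:PseudoIrr}, $\Ai$ is itself a product of separable field extensions of $\Ki$. First I would recall the classical primitive element trick: for a randomly chosen $c\in\Ki$, the element $w:=\bar Z+c\,\bar T\in\Ai$ generates $\Ai$ as a $\Ki$-algebra, i.e. its minimal polynomial $Q_1\in\Ki[W]$ has degree $\delta$ (equivalently $1,w,\ldots,w^{\delta-1}$ is a $\Ki$-basis of $\Ai$). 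The number of bad values of $c$ is bounded by $\binom{\delta}{2}<\delta^2$ (two roots of the étale algebra must not be separated), which is why the hypothesis $\#\Ki\ge\delta^2$ makes this a Las Vegas procedure with success probability at least $1/2$ after restricting to any set of $2\delta^2$ field elements. Given a candidate $c$, one verifies genuineness and simultaneously extracts $Q_1$ and the change-of-basis data by linear algebra on the multiplication-by-$w$ matrix.

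The key algorithmic content is to do this within $\O(\delta^{(\omega+1)/2})$ operations, and here I would invoke the standard fast algorithms for modular composition / power projection à la Brent–Kung and Shoup, exactly as packaged for change of order representation of triangular sets (cf. the references already used for the RAM model). Concretely: compute the successive powers $w^0,w^1,\ldots,w^{\delta}$ in $\Ai$ — each step is one multiplication in $\Ai$, and a multiplication in $\Ai$ reduces to $\O(\M(\delta_T)\,\delta_Z+\M(\delta_Z)\,\delta_T)=\Ot(\delta)$ operations over $\Ki$ via the triangular representation, so the naive loop is $\Ot(\delta^2)$, which is too slow. Instead one uses the baby-step/giant-step power-projection strategy: pick $\ell$ linear forms on $\Ai$ (the coordinate functionals in the triangular basis suffice to reconstruct everything), precompute $w^0,\ldots,w^{\lceil\sqrt\delta\rceil}$ and the iterated "composition" operator $w\mapsto w^{\lceil\sqrt\delta\rceil}$, and evaluate all needed projections $\langle\ell, w^i\rangle$ for $0\le i<\delta$ by $\O(\sqrt\delta)$ multiplications in $\Ai$ plus $\O(\sqrt\delta)$ applications of a $\delta\times\delta$–style linear map realised by the transposition principle; the dominant cost is the $\O(\sqrt\delta)$ dense $\delta\times\sqrt\delta$ matrix products, i.e. $\O(\sqrt\delta\cdot \delta^{(\omega-1)/2}\cdot\sqrt\delta)=\O(\delta^{(\omega+1)/2})$ as claimed. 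From the collected Hankel data of $(\langle\ell,w^i\rangle)_i$ one recovers $Q_1$ (the minimal polynomial, via a Berlekamp–Massey / structured-linear-algebra step in $\Ot(\delta)$) and, for the coordinate functionals, the expression of $\bar T$ and $\bar Z$ as polynomials in $w$ modulo $Q_1$, which is precisely the data defining $\tau^{-1}$ and hence $\tau$.

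For the last sentence of the statement — evaluating $\tau$ at a given $\alpha\in\Ai$ in $\Ot(\delta_T^2\,\delta_Z)$ — the plan is: write $\alpha$ in the triangular basis, so $\alpha=\sum_{j<\delta_Z}\alpha_j(T)\,\bar Z^{\,j}$ with $\deg\alpha_j<\delta_T$; having already expressed $\bar Z\bmod Q_1$ (a degree-$<\delta$ polynomial in $W$) and likewise $\bar T$, one substitutes. Using Horner in $\bar Z$ costs $\delta_Z$ multiplications modulo $Q_1$, each $\Ot(\delta)=\Ot(\delta_T\delta_Z)$, giving $\Ot(\delta_T\delta_Z^2)$ — the wrong shape. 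The correct route is to not pass through $Q_1$ at all for this direction: compute $\tau(\alpha)$ by evaluating the polynomial $\alpha(T,Z)$ at $W\mapsto$ the element of $\Ki[W]/(Q_1)$, but organised as $\delta_Z$ univariate modular compositions of degree $\delta_T$ inside the degree-$\delta_T$ blocks and then a single combination — equivalently, one precomputes once the matrix of the $\Ki$-linear map $\Ai\to\Ki[W]/(Q_1)$ in the triangular basis (this is part of the output of the main procedure, of size $\delta\times\delta$ but with the block-Toeplitz structure coming from multiplication by powers of $\bar T$), and applies it by $\delta_Z$ structured matrix–vector products of size $\delta_T\times\delta_T$, i.e. $\delta_Z\cdot\Ot(\delta_T^2)=\Ot(\delta_T^2\delta_Z)$. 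I expect the main obstacle to be exactly this bookkeeping: making the $(\omega+1)/2$ exponent come out cleanly requires choosing the baby-step/giant-step cutoff so that the dense linear-algebra block and the modular-composition block balance, and tracking that the triangular-to-primitive change of basis really is the transpose of an operation one can evaluate fast (the transposition principle), rather than an a priori dense $\delta\times\delta$ multiply which would only give $\delta^\omega/2$-type bounds and, for the $\tau$-evaluation, only $\Ot(\delta^2)$ instead of the finer $\Ot(\delta_T^2\delta_Z)$.
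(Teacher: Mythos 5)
Your sketch of the construction phase (random linear form $w=\bar Z+c\,\bar T$, at most $\binom{\delta}{2}$ bad values of $c$ so that $\#\Ki\ge\delta^2$ gives a Las Vegas procedure, Shoup-style baby-step/giant-step power projections with transposed multiplications to reach the exponent $(\omega+1)/2$, Berlekamp--Massey to extract $Q_1$, and RUR-type recovery of $\bar T,\bar Z$ as polynomials in $w$) is indeed the standard machinery behind this statement; be aware, though, that the paper does not reprove any of it -- its proof is the single line ``use \cite[Proposition 15]{PoWe17}'', so all of this is imported from that reference. Two internal slips in your first part: certifying a candidate $c$ ``by linear algebra on the multiplication-by-$w$ matrix'' is already too expensive (building that matrix takes $\delta$ products in the algebra, i.e. $\Ot(\delta^2)$, beyond the $\O(\delta^{(\omega+1)/2})$ budget); the certificate must be that the Berlekamp--Massey output has degree $\delta$, and the projection form should itself be chosen at random (a fixed coordinate functional need not be generic).

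The genuine gap is the last claim, the bound $\Ot(\delta_T^2\delta_Z)$ for evaluating $\tau$. You propose to precompute ``the matrix of the $\Ki$-linear map $\Ki[T,Z]/(Q,P)\to\Ki[W]/(Q_1)$ in the triangular basis'' and to apply it as ``$\delta_Z$ structured matrix--vector products of size $\delta_T\times\delta_T$''. This does not hold up: splitting $\alpha=\sum_{j<\delta_Z}\alpha_j(T)\bar Z^{\,j}$ into $\delta_Z$ blocks of length $\delta_T$, the corresponding column blocks of that matrix have size $\delta\times\delta_T$ (their columns are the reduced elements $S_T^{\,i}S_Z^{\,j}\bmod Q_1$, full-length vectors), not $\delta_T\times\delta_T$; the $j$-th block application is an evaluation of $\alpha_j$ at $S_T$ modulo $Q_1$, which costs $\Ot(\delta_T\,\delta)$ by Horner or by precomputed powers, so summing over $j$ gives $\Ot(\delta_T\,\delta_Z\,\delta)=\Ot(\delta_T^2\delta_Z^2)$ -- a factor $\delta_Z$ above the target -- and no Toeplitz structure survives the reduction modulo $Q_1$ that would lower this. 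Moreover, even writing down the full $\delta\times\delta$ matrix costs $\Omega(\delta^2)$ operations, which fits neither in the $\O(\delta^{(\omega+1)/2})$ construction budget nor in the per-evaluation bound. The alternative you dismiss (Horner in $\bar Z$) fails for the same reason: the bottleneck is the $\delta_Z$ evaluations of the $\alpha_j$ at the common point $S_T$, and batching them as a rectangular matrix product still does not reach $\delta_T\,\delta$. So the asymmetric bound $\Ot(\delta_T^2\delta_Z)$ genuinely requires the conversion algorithms for bivariate triangular sets that \cite[Proposition 15]{PoWe17} encapsulates (Poteaux--Schost type change of representation); you should either work out that step in detail or, as the paper does, simply invoke the cited proposition with the appropriate instantiation.
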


\begin{proof}
  Use \cite[Proposition 15]{PoWe17} with $I=(Z_1,Q(Z_2))$ (see
  notations therein).
\end{proof}

In the following, we use that an operation in $\Ki_k$ costs $\Ot(f_k)$
operations in $\Ki$.
\begin{rem}
  Another way to deal with tower extensions would be the recent
  preprint \cite{HoLe18}. This would make all algorithms
  deterministic, with a cost $\O(\vF^{1+o(1)})$ instead of $\Ot(\vF)$.
  Note also \cite{HoLe19} for dynamic evaluation.
\end{rem}

\subsection{Truncation bounds}
\label{ssec:trunc}

In order to estimate the complexity in terms of arithmetic operations
in $\Ki$, we will compute approximate roots and $\Psi$-adic expansions
modulo a suitable truncation bound for the powers of $\psi_{-1}=x$. We
show here that the required sharp precision is the same than the one
obtained in \cite[Section 3]{PoWe17} for the Newton-Puiseux type
algorithm. Note also \cite[Theorem 2.3, page 144]{BaNaSt13} that
provides similar results in the context of irreducibility test. In the
following, when we say that we truncate a polynomial with precision
$\tau\in\Qi$, we mean that we keep only powers of $X$ less or equal
than $\tau$.

The successive polynomials generated by the function call \PIrr{}($F$)
are still denoted $H_0,\ldots,H_g$, and we let $(q_{g+1},m_{g+1})$
stand for the slope of the lowest edge of $H_g$, with convention
$(q_{g+1},m_{g+1})=(1,0)$ if $N_g=1$. As $\deg(H_k)=N_k$ and
$\NP(H_k)$ has a lowest edge of slope $-m_{k+1}/q_{k+1}$, the
computation of the lowest \edgepoly{} $\bar{H}_k$ only depends on $H_k$
truncated with precision $N_k m_{k+1}/q_{k+1}$.  Combined with
\eqref{eq:pikHk}, and using $\val(\pi_k^*(x))=e_k$, we deduce that the
$k^{th}$-edge data only depends on $F$ truncated with precision
\[
  \eta_k:=\frac{v_k(F)}{e_k}+ N_k \frac{m_{k+1}}{e_{k+1}}.
\]
Denoting $\displaystyle{} \eta(F):=\max_{0\leq k\leq g}(\eta_k)$, we
deduce that running \PIrr{} modulo $x^{\eta(F)+1}$ returns the correct
answer, this bound being sharp by construction.

\begin{lem}\label{lem:etaF}
  Denoting $\eta_{-1}=0$, we have
  $\eta_{k}=\eta_{k-1}+\frac{N_k m_{k+1}}{e_{k+1}}$ for any
  $0\leq k\leq g$. In particular,
  $\eta (F)= \eta_g=\sum_{k=1}^{g+1} \frac{N_{k-1} m_k}{e_k}$.
\end{lem}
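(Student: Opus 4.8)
The plan is to prove the one-step recursion $\eta_k=\eta_{k-1}+\frac{N_k m_{k+1}}{e_{k+1}}$ first, and then get the closed form for $\eta(F)$ by telescoping. Unwinding the definition $\eta_j=\frac{v_j(F)}{e_j}+N_j\frac{m_{j+1}}{e_{j+1}}$ at ranks $j=k$ and $j=k-1$, the recursion is equivalent to the identity $\frac{v_k(F)}{e_k}=\frac{v_{k-1}(F)}{e_{k-1}}+\frac{N_{k-1}m_k}{e_k}$, i.e., after multiplying through by $e_k=q_k e_{k-1}$, to
\[
  v_k(F)=q_k\,v_{k-1}(F)+N_{k-1}\,m_k\qquad(1\le k\le g).
\]
For $k=0$ nothing is needed beyond the base case: $e_0=1$ and $v_0(F)=0$ since $H_0$ is Weierstrass (this is \eqref{eq:pikHk} at rank $0$, where $\pi_0^*F=H_0$), so $\eta_0=N_0 m_1/e_1=\eta_{-1}+N_0 m_1/e_1$.

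To establish the displayed identity I would just chain formulas already available in Section \ref{sec:ARNP}. By Lemma \ref{lem:FPhi}, $v_k(F)=N_k v_{k,k}$; by Proposition \ref{prop:vkk}, $v_{k,k}=q_k\ell_k v_{k,k-1}$; and by Point 1 of Lemma \ref{lem:vLambda}, $v_{k,k-1}=q_k v_{k-1,k-1}+m_k$. Combining these, $v_k(F)=N_k q_k\ell_k\,(q_k v_{k-1,k-1}+m_k)$. The degree relations $d_k=q_k\ell_k d_{k-1}$ and $\dy=N_k d_k=N_{k-1}d_{k-1}$ give $N_k q_k\ell_k=N_{k-1}$, whence $v_k(F)=q_k\,(N_{k-1}v_{k-1,k-1})+N_{k-1}m_k=q_k\,v_{k-1}(F)+N_{k-1}m_k$, the last equality being Lemma \ref{lem:FPhi} at rank $k-1$. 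This proves the recursion for all $0\le k\le g$.

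It remains to telescope. From $\eta_{-1}=0$ one gets $\eta_g=\sum_{k=0}^{g}(\eta_k-\eta_{k-1})=\sum_{k=0}^{g}\frac{N_k m_{k+1}}{e_{k+1}}=\sum_{k=1}^{g+1}\frac{N_{k-1}m_k}{e_k}$ after the reindexing $k\mapsto k+1$. Finally, each increment $\frac{N_k m_{k+1}}{e_{k+1}}$ is non-negative (one has $m_k>0$ for $1\le k\le g$, and $m_{g+1}\ge 0$ by the convention fixed for the lowest edge of $H_g$), so the finite sequence $\eta_0\le\eta_1\le\cdots\le\eta_g$ is non-decreasing and hence $\eta(F)=\max_{0\le k\le g}\eta_k=\eta_g$. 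I do not expect a genuine obstacle here; the only point requiring some care is bookkeeping the $e_k$-normalisations together with the relation $N_{k-1}=q_k\ell_k N_k$, which is precisely the statement (used throughout Section \ref{sec:ARNP}) that the degrees $N_k$ strictly decrease along the tower.
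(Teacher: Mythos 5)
Your proof is correct and follows essentially the same route as the paper: both reduce the recursion to the identity $v_k(F)=q_k\,v_{k-1}(F)+N_{k-1}m_k$ via Lemma \ref{lem:FPhi}, Proposition \ref{prop:vkk}, Point 1 of Lemma \ref{lem:vLambda} and the relation $N_{k-1}=q_k\ell_k N_k$, then telescope. Your explicit remark that the increments are non-negative (so that $\eta(F)=\max_k\eta_k=\eta_g$) just spells out what the paper leaves as ``follows straightforwardly.''
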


\begin{proof}
  As $v_k(F)=N_k\, v_{k,k}$ from Lemma \ref{lem:FPhi}, we get for any $0\leq k\leq g$
  \[
    \eta_k=\frac{N_{k} v_{k,k}}{e_{k}}+\frac{N_{k} m_{k+1}}{e_{k+1}}.
  \]
  As $v_{0,0}=0$, case $k=0$ is proved. Let $k\ge 1$. Previous formula used at rank $k-1$ gives
  \[
    \eta_{k-1}=\frac{N_{k-1} v_{k,k-1}}{e_{k}}=\frac{N_{k} v_{k,k}}{e_{k}},
  \]
  first equality using Point 1 of Lemma \ref{lem:vLambda}
  ($v_{k,k-1}=q_k v_{k-1,k-1}+m_k$) and second equality using
  $N_{k-1}= q_{k} \ell_{k} N_{k}$ and equality
  $v_{k,k}=q_{k} \ell_{k} v_{k,k-1}$ of Proposition
  \ref{prop:vkk}. This gives
  $\eta_k = \eta_{k-1}+\frac{N_k m_{k+1}}{e_{k+1}}$ as required. The
  formula for $\eta(F)$ follows straightforwardly.
\end{proof}

\begin{rem} We have the formula
  $\eta_k = \frac{\val(\pi_k^*F(x,0))}{e_{k+1}} =
  \frac{(F,\,\phi_k)_0}{d_k}$ for $k< g$, from respectively
  \eqref{eq:pikHk} and Corollary \ref{cor:res}. We deduce in
  particular that the sequence
  $(N_0,d_0\eta_0,\ldots,d_{g-1} \eta_{g-1})$ is a minimal set of
  generators of the semi-group of $F$ when $F$ is irreducible in
  $\algclos{\Ki}[[x]][y]$ ; see e.g. \cite[Proposition 4.2 and Theorem
  5.1]{Pop02}.
\end{rem}

\begin{prop}\label{prop:eta}
  Let $F\in\Ki[[x]][y]$ be monic and separable of degree $\dy$, with
  discriminant valuation $\vF$. Then $\eta(F) \le \frac{2\vF}{\dy}$.
  If moreover $F$ is quasi-irreducible, then $\eta(F)\ge \vF/\dy$.
\end{prop}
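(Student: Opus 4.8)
The plan is to bound $\eta(F)$ using the explicit formula $\eta(F) = \sum_{k=1}^{g+1} \frac{N_{k-1}m_k}{e_k}$ from Lemma \ref{lem:etaF}, and to compare it with a corresponding expression for $\vF$. First I would recall the classical formula for the discriminant valuation in terms of the edge data coming from the Newton–Puiseux algorithm: when $F$ is quasi-irreducible (resp. irreducible over $\algclos{\Ki}$), $\vF$ equals the sum of pairwise intersection multiplicities of the conjugate branches of $F$, and this can be written in terms of the $(q_k, m_k, \ell_k, N_k)$ as something like $\vF = \sum_{k=1}^{g} N_{k-1}(q_k\ell_k - 1)\cdot(\text{something}) \cdot \frac{m_k}{e_k}$ — more precisely, using $d_{k-1} = \dy/N_{k-1}$ and $e_k \mid d_k$, each edge at step $k$ contributes a term proportional to $d_{k-1}^2 \cdot \frac{m_k}{e_k q_k}$ times a combinatorial factor. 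I would extract this via Corollary \ref{cor:res} applied to $G = \partial_y F$, or directly cite the semigroup description alluded to in the Remark after Lemma \ref{lem:etaF} (the sequence $(N_0, d_0\eta_0,\ldots,d_{g-1}\eta_{g-1})$ generates the semigroup), since $\vF$ is known to be $2\delta$ where $\delta$ is the $\delta$-invariant, itself computable from the semigroup generators.

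The key inequality to establish in the general (not necessarily irreducible) case is $\eta(F) \le \frac{2\vF}{\dy}$. Here I would argue as follows: $\eta(F) = \eta_g$, and each increment $\eta_k - \eta_{k-1} = \frac{N_k m_k}{e_k}$ wait — it is $\frac{N_{k-1}m_k}{e_k}$ from Lemma \ref{lem:etaF}. Multiplying by $\dy$ and using $\dy = N_{k-1}d_{k-1} = N_{k-1}e_{k-1}f_{k-1}$, one gets $\dy\cdot\frac{N_{k-1}m_k}{e_k} = N_{k-1}^2 e_{k-1}f_{k-1}\frac{m_k}{e_k} = N_{k-1}^2 f_{k-1}\frac{m_k}{q_k}$. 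On the other hand the contribution of the $k$-th edge to $2\vF$ (equivalently to $4\delta$) is, by the standard computation, at least $N_{k-1}^2 f_{k-1}\frac{m_k}{q_k}\cdot(q_k\ell_k - 1)$ in the worst case, hmm, or rather the precise comparison is that each term of $\dy\,\eta(F)$ is dominated by the corresponding term of $2\vF$ because $q_k\ell_k \ge 2$ always holds (noted just before Proposition \ref{prop:NPA}). The factor of $2$ slack absorbs the last edge $k = g+1$, whose slope $m_{g+1}/q_{g+1}$ may be nonzero even when $F$ is reducible but contributes nothing to the discriminant of the individual factors — this last edge is precisely where the inequality is not tight and where the constant $2$ is needed rather than $1$.

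For the lower bound $\eta(F) \ge \vF/\dy$ under the quasi-irreducibility hypothesis, I would use Lemma \ref{lem:d-delta}: there $(\dy - 1)f \le \vF$, but more usefully I would again match term by term. When $F$ is quasi-irreducible, $g+1$ is effectively $g$ (the last edge has $N_g = 1$ so $m_{g+1}/q_{g+1}$ may or may not vanish), and each edge contributes genuinely to $\vF$; summing the inequalities $\dy\cdot\frac{N_{k-1}m_k}{e_k} \ge (\text{$k$-th term of }\vF)$ — which now goes the other way because $q_k\ell_k - 1 \le$ something, no: because the $\delta$-invariant of an irreducible germ is bounded above by its conductor contributions, and $\vF = 2\delta \le \sum_k N_{k-1}d_{k-1}\frac{m_k}{e_k}\cdot 1 \le \dy\,\eta(F)$ roughly. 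The main obstacle will be getting the precise combinatorial identity for $\vF$ in terms of the edge data right, including correctly handling the residual degrees $\ell_k$ and the ramification $e_k = q_1\cdots q_k$; once that identity is in hand, both inequalities reduce to the elementary fact $1 \le q_k\ell_k - 1 + 1$ applied edgewise, together with careful bookkeeping of the terminal edge. I would double-check the constants on Example \ref{xmp:sharp} and Example \ref{ex:2}, where $\vF$, $\dy$, and the $\eta_k$ are all computed explicitly, to make sure the factor $2$ is sharp and the lower bound $\vF/\dy$ holds.
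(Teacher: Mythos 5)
Your plan diverges from the paper's proof, and in its present form it has a genuine gap. The paper does not compare $\eta(F)=\sum_k N_{k-1}m_k/e_k$ with a closed formula for $\vF$ in terms of edge data; instead it works per root, citing \cite[Subsection 3.3 and Corollary 4]{PoWe17} to get $\eta(F)\le 2v_i$ for \emph{every} root $y_i$, where $v_i:=\val(\partial_y F(y_i))=\sum_{j\ne i}\val(y_i-y_j)$, and then concludes from $\vF=\sum_i v_i$ that $\eta(F)\le 2\min_i v_i\le 2\vF/\dy$; the lower bound follows because for quasi-irreducible $F$ all the $v_i$ are equal and $v_i\le\eta(F)$. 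Your route would require an identity (or at least an inequality) expressing $\vF$ through the partial edge data $(q_k,m_k,\ell_k,N_k)_{k\le g}$ and the terminal slope $m_{g+1}/q_{g+1}$, valid for \emph{arbitrary} monic separable $F$, and you never actually produce it: the crucial step is left as ``something like \dots a combinatorial factor'' with the comparison justified only by $q_k\ell_k\ge 2$. For reducible $F$ no such clean formula exists, because $\vF$ mixes the discriminants of the factors with twice their pairwise intersection numbers, and relating these to the single chain of edge data computed by the algorithm is essentially the content of the cited results of \cite{PoWe17}.

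The concrete point of failure is your treatment of the last edge $k=g+1$. You claim it ``contributes nothing to the discriminant of the individual factors'' and is absorbed by the slack in the factor $2$. Test this on $F=(y^2-x^3)(y^2-x^3-x^{100})$: here $\dy=4$, $\vF=3+3+2\cdot 200=406$, the edge data is $(q_1,m_1,\ell_1,N_1)=(2,3,1,2)$ with terminal slope $m_2/q_2=194$, so $\eta(F)=6+194=200$ while $2\vF/\dy=203$. The terminal edge contributes $194$ of the $200$, far more than any ``factor $2$ slack'' over the $k\le g$ terms (which contribute only $6$); what actually dominates it is the cross term $2(F_1,F_2)_0=400$ inside $\vF$, i.e.\ deep coincidences between roots of \emph{different} factors. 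So the termwise domination you propose is not the right mechanism, and making it work would force you to control the terminal slope by valuations of differences of roots across factors --- which is precisely the per-root estimate $\eta(F)\le 2v_i$ the paper imports from \cite{PoWe17}. The sketch of the lower bound has the same problem (the intended inequality direction is never pinned down); in the quasi-irreducible case the clean statement you need is again $v_i\le\eta(F)$ with all $v_i$ equal, not a semigroup computation. I would redirect your effort either to quoting the relevant results of \cite{PoWe17}, as the paper does, or to proving the per-root inequalities directly, rather than seeking an edge-data formula for $\vF$.
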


\begin{proof}
  It follows from Lemma \ref{lem:etaF} that $\eta(F)$ is smaller or
  equal than the quantity ``$N_i$'' defined in \cite[Subsection
  3.3]{PoWe17} (take care of notations, these $N_i$ are not the same
  as those defined here), with equality if $F$ is
  quasi-irreducible. From \cite[Corollary 4]{PoWe17}, we deduce
  $\eta(F) \le 2 v_i$ for $i=1,\ldots,\dy$, where
  $v_i:=\val(\partial_y F(y_i))$, $y_i$ denoting the roots of $F$. As
  $\vF=\sum v_i$, we have $\min v_i\le \vF/\dy$ and the upper bound
  for $\eta(F)$ follows. If $F$ is quasi-irreducible, then we have
  also $v_i \le \eta(F)=N_i$ by \cite[Corollary 4]{PoWe17}. As all
  $v_i$'s are equal in that case, the lower bound follows too.
\end{proof}

\begin{rem}[\emph{Dealing with the precision}]\label{rem:precision} As
  $\vF$ is not given, we do not have an \textit{a piori} bound for the
  precision $\eta(F)$. To deal with this problem, one can either use
  \emph{relaxed} computations \cite{Ho02} or just restart the whole
  computation when we realise that we are missing precision. With both
  solutions, we need to increase the precision each time the computed
  lowest edge of the Newton polygon is not ``guaranted'' in the sense
  of \cite[Definition 8 and Figure 1.b]{PoWe17}.  In algorithm \PIrr{}
  below, we use the second option; this is done at lines
  \ref{Pirr:etaprim} and \ref{Pirr:callback}, thanks to Lemma
  \ref{lem:etaF}. In terms of complexity, both solutions only
  multiply the complexity bound by a logarithmic factor.
\end{rem}

\subsection{Main subroutines}
\label{ssec:subroutines}

\paragraph{Computing approximate roots and $\Psi$-adic expansion.}
\begin{prop}\label{prop:approx}
  There exists an algorithm $\AppRoot{}$ which given $F\in \Ai[y]$ a
  degree $d$ monic polynomial defined over a ring of characteristic
  not dividing $d$ and given $N$ which divides $d$, returns the
  $N^{th}$ approximate root $\psi$ of $F$ with $\M(d)$ operations over
  $\Ai$.
\end{prop}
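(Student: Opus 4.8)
The plan is to compute the $N^{th}$ approximate root $\psi$ of $F = \sum_{i=0}^\dy a_i y^i$ by Newton iteration on the equation $\psi^N \approx F$, working in the ``reversed'' setting so that $y$ becomes the small parameter. First I would reduce to the case where $a_\dy = 1$ and pass to the polynomial $\rev(F)(y) := y^\dy F(1/y) = \sum_i a_{\dy-i}\,y^i$, whose constant term is $1$; then the $N^{th}$ approximate root of $F$ is the reverse of the unique monic-up-to-scaling factor one gets from a $y$-adic $N^{th}$ root of $\rev(F)$, truncated at the right precision. The key point (already used implicitly in the paper via Proposition \ref{prop:approx-root}) is that since $\Char(\Ai)\nmid\dy$, hence $\Char(\Ai)\nmid N$, the formal $N^{th}$ root $(\rev F)^{1/N}$ exists and is unique in $\Ai[[y]]$ once we normalize its constant term to $1$, and the approximate root is exactly this series truncated modulo $y^{\,d/N+1}$ (reversed back).

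Next I would carry out the Newton iteration for $g := h^{1/N}$ where $h := \rev(F)$: starting from $g_0 = 1$, iterate
\[
  g_{j+1} \;=\; g_j \;+\; \frac{1}{N}\, g_j\,\bigl(h\,g_j^{-N} - 1\bigr) \pmod{y^{2^{j+1}}},
\]
which doubles the known precision at each step. Each step requires a constant number of multiplications and one inversion of a power series, all truncated at the current precision; using fast multiplication and the standard fact that power-series inversion at precision $d$ costs $\O(\M(d))$, the total cost is a geometric sum dominated by its last term, namely $\O(\M(d/N)) \le \O(\M(d))$ operations over $\Ai$. We run the iteration until precision exceeds $d/N$, truncate $g$ modulo $y^{d/N+1}$, reverse it back to obtain a degree $d/N$ polynomial, and rescale to make it monic; a degree count together with the uniqueness in Proposition \ref{prop:approx-root} identifies the result as $\psi$. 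Finally one checks $\deg(F-\psi^N)<d-d/N$ directly from the construction, or simply invokes Proposition \ref{prop:approx-root}.

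The main obstacle I anticipate is purely bookkeeping rather than conceptual: one must be careful that the Newton iteration is performed on the \emph{reversed} polynomial (so that the relevant valuation is the $y$-adic one and truncation makes sense), and that the precision bookkeeping yields exactly $\M(d)$ and not $\M(d)\log d$ — this is where the hypothesis $d'\M(d)\le\M(d'd)$ and the geometric-series argument for doubling schemes are essential, so that the cost collapses to the final iteration. A secondary subtlety is the division by $N$, which is licit precisely because $\Char(\Ai)\nmid\dy$ and $N\mid\dy$; this must be flagged explicitly. Everything else — correctness of the doubling Newton scheme for $N^{th}$ roots of power series, and the identification of the truncation with the approximate root — is standard (see e.g. \cite[Section 9]{GaGe13} for the $N=2$ case, which generalizes verbatim).
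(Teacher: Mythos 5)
Your proposal is correct and follows essentially the same route as the paper's proof: pass to the reciprocal polynomial $y^{d}F(1/y)$, whose constant term is $1$, extract its unique $N^{th}$ root series with constant term $1$ by quadratic Newton iteration to precision $d/N$ (licit since $\Char(\Ai)\nmid d$ hence $\Char(\Ai)\nmid N$), truncate and reverse, exactly as in the paper (which cites \cite[Theorem 9.25]{GaGe13} and \cite[Proposition 3.4]{Pop02} for these two steps). The only micro-quibble is that each iteration needs $g_j^{-N}$, i.e.\ $\O(\log N)$ multiplications rather than a constant number, but this still fits in $\O(\M(d))$ thanks to $d'\M(d)\le \M(d'd)$.
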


\begin{proof}
  Let $G=y^{d}F(1/y)$ be the reciprocal polynomial of $F$. So $G(0)=1$
  and there exists a unique series $S\in \Ai[[y]]$ such that $S(0)=1$
  and $G=S^N$. Then $\psi$ is the reciprocal polynomial of the
  truncated series $\tronc{S}{\frac d N}$ (see e.g. \cite[Proposition
  3.4]{Pop02}). The serie $S$ is solution of the equation $Z^N-G=0$ in
  $\Ai[[y]][Z]$ and can be computed up to precision $d/N$ within
  $\M(d)$ operations by quadratic Newton iteration \cite[Theorem
  9.25]{GaGe13}.
\end{proof}

\begin{prop}\label{prop:expand}
  There exists an algorihm \Expand{} which, given $F\in \Ai[y]$ of
  degree $d$ and $\Psi=(\psi_{0},\ldots,\psi_k)$ a collection of monic
  polynomials $\psi_i\in \Ai[y]$ of strictly increasing degrees
  $d_0<\cdots < d_k\le d$ returns the reduced $\Psi$-adic expansion of
  $F$ in less than $\O((k+1)\,\M(d)\,\log(d))$ arithmetic operations over $\Ai$.
\end{prop}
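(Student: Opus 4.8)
The plan is to compute the $\Psi$-adic expansion by a divide-and-conquer recursion on the index $k$, in exactly the same spirit as the classical fast base conversion / fast multipoint evaluation recursions in \cite[Section~9]{GaGe13}. First I would recall that the $\Psi$-adic expansion is obtained in stages: writing $F=\sum_{i} c_i\,\psi_k^{i}$ as the $\psi_k$-adic expansion (Euclidean divisions by $\psi_k$), then re-expanding each coefficient $c_i$, which has degree $<d_k$, in the basis $(\psi_{-1},\ldots,\psi_{k-1})$. So the algorithm is: (i) compute the $\psi_k$-adic expansion of $F$; (ii) recurse on each block $c_i$ with the shorter collection $(\psi_{-1},\ldots,\psi_{k-1})$. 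Because $\deg(\psi_k)=d_k$ divides $d$ (this follows from $\deg(\phi_i)=\deg(\phi_{i-1})q_i\ell_i$ and $\deg\Psi=\deg\Phi$), the $\psi_k$-adic expansion of a polynomial of degree $<d$ has at most $d/d_k$ blocks, each of degree $<d_k$.

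The cost of step (i) is the crux. The naive iterated division by $\psi_k$ costs $\Omega((d/d_k)\,\M(d))$, which is too much. Instead I would use the standard recursive splitting: to expand a polynomial $G$ of degree $<2^{j}d_k$ in powers of $\psi_k$, first compute $\psi_k^{2^{j-1}}$ (these powers are precomputed once, by repeated squaring, in $\O(\M(d)\log(d/d_k))$ total), divide $G$ by $\psi_k^{2^{j-1}}$ to split it into a high part and a low part each of degree $<2^{j-1}d_k$, and recurse on both halves. The recursion tree has depth $\log(d/d_k)$ and total work $\O(\M(d)\log(d/d_k))=\O(\M(d)\log d)$ at level $k$, by the super-additivity $d'\M(d)\le\M(d'd)$ which the paper has assumed. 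Summing the contribution of step (i) over all $k+1$ levels of the outer recursion gives $\O((k+1)\,\M(d)\log d)$; and one checks that the total across all the recursive blocks at a fixed outer level is again $\O(\M(d)\log d)$, since the blocks have disjoint degree ranges summing to at most $d$ and again $\sum \M(d_j)\le \M(\sum d_j)\le \M(d)$ for the block contributions, with the logarithmic factor coming from the inner divide-and-conquer depth.

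The remaining points are bookkeeping: one argues that the precomputation of all the power tables $\{\psi_i^{2^s}\}$ costs $\O((k+1)\,\M(d)\log d)$ in total (it is dominated by the expansion cost anyway), and that the total number of coefficients produced is $\prod_i (q_i\ell_i)\le d$, so merely writing down the output is within budget. The main obstacle I anticipate is making the complexity accounting for step (i) uniform across the two nested recursions (the outer one on $k$ and the inner divide-and-conquer on the exponent of $\psi_k$) without double-counting; the clean way is to prove the one-level statement ``expanding a degree-$<d$ polynomial in powers of a fixed monic $\psi_k$ of degree $d_k\mid d$ costs $\O(\M(d)\log d)$'', and then invoke it $(k+1)$ times, once per level, observing that at each level the inputs partition a polynomial of degree $\le d$ into sub-blocks so the per-level bound $\O(\M(d)\log d)$ is preserved by super-additivity of $\M$.
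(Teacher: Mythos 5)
Your proposal is correct and follows essentially the same route as the paper: compute the $\psi_k$-adic expansion first, then recurse on the coefficient blocks of degree $<d_k$ and sum the costs using the super-additivity of $\M$. The only difference is cosmetic: where you spell out the divide-and-conquer (with precomputed powers $\psi_k^{2^s}$) for the one-level expansion, the paper simply cites the standard result \cite[Theorem 9.15]{GaGe13}, which is exactly that algorithm.
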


\begin{proof}
  The $\psi_k$-adic expansion of $F=\sum a_i \psi_k^i$ requires
  $\O(\M(d)\log(d))$ operations by \cite[Theorem 9.15]{GaGe13}. If
  $k>0$, we recursively compute the $(\psi_0,\ldots,\phi_{k-1})$-adic
  expansion of $a_i$ in $\O(k\M(\deg\,a_i)\log(\deg\,a_i))$
  operations. Since $\deg(a_i)< d_k$, summing over all
  $i=0,\ldots,\lfloor d/d_k\rfloor $ gives $\O(k\,\M(d)\,\log(d))$
  operations.
\end{proof}

\paragraph{Computing \edgepoly{}s.}
\begin{prop}\label{prop:computeHbark}
  Given $F$ and $\Psi=(\psi_{-1},\ldots,\psi_{k})$ modulo
  $x^{\eta(F)+1}$, $V=(v_{k,-1},\ldots,v_{k,k})$ and
  $\Lambda=(\lambda_{k,-1},\ldots,\lambda_{k,k})$, there exists an
  algorithm \Edgepoly{} that computes the lowest \edgepoly{}
  $\bar{H}_k\in \Ki_k[x,y]$ within $\Ot(\vF+f_k^2)$ operations over
  $\Ki$.
\end{prop}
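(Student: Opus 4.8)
The plan is to follow the formula \eqref{eq:barHk} directly: the lowest \edgepoly{} $\bar H_k$ is determined by the set $\Bc(i,w_i'+v_k(F))$ of exponents minimising $\langle B,V\rangle$ for each fixed $b_k=i$ on the relevant edge, together with the coefficients $f_B'$ of the $\Psi$-adic expansion and the monomials $\Lambda^{B-B_0}$. So the algorithm has three phases: (1) inspect the $\Psi$-adic expansion of $F$ (given truncated mod $x^{\eta(F)+1}$, hence of total arithmetic size $\Ot(\vF)$ by Proposition \ref{prop:eta}) and, for each $i$ from $0$ to $N_k$, compute $w_i'=\min\{\langle B,V\rangle : b_k=i, f_B'\neq 0\}-v_k(F)$ from \eqref{eq:wj}, keeping the $\O(1)$-per-lattice-point bookkeeping of which $B$'s attain the minimum; (2) form the lower convex hull of the points $(i,w_i')$ and keep only the lowest edge $\Delta$; (3) for each $(i,w_i')\in\Delta$, evaluate the inner sum $\sum_{B\in\Bc(i,w_i'+v_k(F))} f_B'\,\Lambda^{B-B_0}$, which is an element of $\Ki_k$. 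First I would bound the number of nonzero terms $f_B'\Psi^B$: since $\Psi$ is truncated mod $x^{\eta(F)+1}$ and the $\psi_i$ have strictly increasing degrees with $\deg\psi_k=d_k$, the number of monomials is $\O(\vF+\dy)=\O(\vF)$ by Lemma \ref{lem:d-delta}, and each scalar product $\langle B,V\rangle$ costs $\O(k)=\O(\log\dy)$ integer operations (not $\Ki$-operations), so phase (1) is $\Ot(\vF)$; phase (2) is a convex-hull computation on $\le N_k+1\le\dy$ points, hence $\Ot(\dy)=\Ot(\vF)$.

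The genuine cost is in phase (3). Here I would \emph{not} recompute each $\Lambda^{B-B_0}$ from scratch: instead, note that on the edge $\Delta$ we only need the $B$'s with $b_k=i$ for $i$ running over at most $\dy$ values, and by Proposition \ref{prop:key} the family $(\Lambda^B : B\in\Bc(i,w))$ is free over $\Ki$, so $\Card\,\Bc(i,w_i'+v_k(F))\le f_k$ for each $i$. Thus the total number of monomials $\Lambda^{B-B_0}$ to be evaluated on $\Delta$ is $\O(\dy f_k)$. I would precompute, once and for all, the powers $\lambda_{k,j}^{b}$ for $-1\le j\le k$ and $0\le b< q_{j+1}\ell_{j+1}$ needed — using the recursive structure, each $\Lambda^{B}$ is a product of $k+1=\O(\log\dy)$ such powers, and a power $\lambda_{k,j}^b$ with $b<q\ell$ costs $\O(\log(q\ell))$ multiplications in $\Ki_k$, i.e. $\Ot(f_k)$ operations over $\Ki$. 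Careful accounting: the subtle point is that the exponents $b_j$ in $B-B_0$ are bounded by $q_{j}\ell_{j}$ only coordinatewise, but $\langle B,V\rangle$ is pinned to $w_i'+v_k(F)$ which is $\O(\vF)$, and $V$ has strictly positive last coordinates, so there are genuinely at most $\O(\vF)$ such $B$ in total across all $i$ on $\Delta$; combined with the $\le f_k$ bound per $i$ this gives $\O(\min(\vF,\dy f_k))$ monomials. Each is evaluated in $\Ot(f_k)$ operations over $\Ki$ after precomputing the partial products along the $\psi_{-1},\dots,\psi_{k-1}$ chain by a subproduct-tree, so phase (3) costs $\Ot(\vF f_k)$ in the crude count — which is too much.

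To bring this down to $\Ot(\vF+f_k^2)$ I would separate the two regimes. Write the inner sum over $\Bc(i,w)$ as $\sum_B f_B'\,\mu_B\, z_k^{N_B}$ exactly as in the proof of Proposition \ref{prop:key}, where $\mu_B\in\Ki_{k-1}$ and $N_B$ is an explicit linear form in the $b_j$; grouping by the residue $\alpha\equiv N_B\pmod{\ell_k}$ reduces each inner sum to computing $\le\ell_k$ coefficients in $\Ki_{k-1}$, so the $z_k$-arithmetic is confined to $\O(\ell_k^2)$ operations over $\Ki_{k-1}$ per edge point, total $\Ot(\dy\ell_k^2\, f_{k-1})=\Ot(\dy f_k\ell_k)$. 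Unwinding this recursion in $k$ (each level contributes a factor $\ell_i$ and the residue fields grow as $f_i=\ell_1\cdots\ell_i$) telescopes, using $\sum\ell_i\le\prod\ell_i=f_k\le\vF/(\dy-1)+1$ from Lemma \ref{lem:d-delta}, to a bound dominated by $\Ot(\vF+f_k^2)$: the $\vF$ term comes from the $\O(\vF)$ coefficients $f_B'$ each touched $\Ot(1)$ times with scalar ($\Ki_{k-1}$-linear combination) work amortised, and the $f_k^2$ term is the worst-case cost of the final linear-algebra step recovering the $\le f_k$ coefficients of $\bar H_k$ in $\Ki_k$ from the $\le f_k$ values attained, via the free-family property of Proposition \ref{prop:key} (a $f_k\times f_k$ linear solve over $\Ki$, which is $\O(f_k^\omega)\subseteq\Ot(f_k^2)$ only if $\omega\le 2$ — more honestly one reuses the explicit triangular $z_k^\alpha$-structure so it is $\Ot(f_k^2)$ without fast matrix multiplication). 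The main obstacle, and the step I expect to require the most care, is precisely this amortised accounting: showing that the seemingly $\dy f_k$-many monomial evaluations and the recursive $z_i$-arithmetic collapse to $\Ot(\vF+f_k^2)$ rather than $\Ot(\vF f_k)$, which hinges on exploiting the linear constraint $\langle B,V\rangle=w$ to bound $\Card\,\Bc(i,w)$ by $f_k$ (Proposition \ref{prop:key}) simultaneously with the global bound $\O(\vF)$ on the number of surviving monomials, and on never materialising $\Lambda^B$ as a dense element of $\Ki_k$ but keeping it in the $z_k^\alpha$-decomposed form.
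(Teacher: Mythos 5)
Your phases (1)--(2) and the first half of your phase (3) --- compute the truncated $\Psi$-adic expansion via Proposition \ref{prop:expand}, read off the $w_i'$ and the lowest edge at essentially no arithmetic cost over $\Ki$, bound $\Card\,\Bc(i,w_i'+v_k(F))\le f_k$ by Proposition \ref{prop:key}, and evaluate each $\Lambda^{B-B_0}$ by fast exponentiation in $\Ot(f_k)$ operations over $\Ki$ --- are exactly the paper's argument. The gap is in your count of how many monomials must be evaluated. You let the abscissa $i$ range over ``at most $\dy$ values'', obtaining $\O(\dy f_k)$ monomials and hence a crude total of $\Ot(\vF f_k)$, which you then try to repair by an amortisation over the tower. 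But $i=b_k$ is the exponent of $\psi_k$, whose degree is $d_k=e_k f_k$, so $i\le N_k=\dy/(e_k f_k)$: the lowest edge carries at most $N_k+1$ integer abscissae, hence at most $(N_k+1)f_k\le \dy/e_k+f_k$ monomials in total. Since each costs $\Ot(f_k)$ operations over $\Ki$ (the exponents being bounded via $w_i+v_k(F)\le e_k\eta_k\le 2\vF$, Proposition \ref{prop:eta}), phase (3) is $\Ot(\dy f_k/e_k+f_k^2)$, and $\dy f_k=\Ot(\vF)$ by Lemma \ref{lem:d-delta} (applicable as long as the algorithm has not yet returned \False{}, as noted in the proof of Proposition \ref{prop:fastIrr}). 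This direct count is the whole of the paper's proof; no further amortisation is needed.

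The rescue you build on top of the overcount is, moreover, not established. Grouping by the residue of $N_B$ modulo $\ell_k$ still requires computing the elements $\mu_B\in\Ki_{k-1}$, i.e. $\Ot(f_{k-1})$ operations per surviving monomial, so under your count you are back to $\Ot(\dy f_k f_{k-1})$, and the claimed telescoping is never actually derived; the inequality $\sum\ell_i\le\prod\ell_i$ you invoke is even false in general, since $\ell_i=1$ is allowed (only $q_i\ell_i>1$ is guaranteed). Finally, the ``$f_k\times f_k$ linear solve recovering the coefficients of $\bar H_k$ via the free-family property'' does not correspond to any step of the computation: the coefficients $c_{k,i}=\sum_{B\in\Bc(i,w_i+v_k(F))}f_B\,\Lambda^{B-B_0}$ are obtained by direct summation, the $f_B$ being already known from the expansion, and Proposition \ref{prop:key} enters only through the cardinality bound $\Card\,\Bc(i,w)\le f_k$. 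So as written your analysis of phase (3) does not prove the stated bound; correcting the range of $i$ makes your own ``crude count'' already give $\Ot(\vF+f_k^2)$.
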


\begin{proof}
  First compute the $\Psi$-adic expansion $F=\sum f_B \Psi^B$ modulo
  $x^{\eta+1}$, with $\eta:=\eta(F)$. As $\eta\le 2\vF/\dy$, this is
  $\Ot(\vF)$ by Proposition \ref{prop:expand} applied with
  $\Ai=\Ki[x]/(x^{\eta+1})$, using $k\leq\log(\dy)$ and Lemma
  \ref{lem:d-delta}. We compute the lowest edge of $\NP(H_k)$ via
  Theorem \ref{thm:NPphi}; this takes no arithmetic
  operations\footnote{for the interested reader, it can easily be
    shown that this takes $\Ot(\vF)$ bit operations}. It remains to
  compute the coefficient of each monomial $x^{w_i} y^i$ of
  $\bar{H}_k$, which is
  \[
    c_{k,i}:=\sum_{B\in \Bc(i,w_i+v_k(F))} f_B \Lambda^{B-B_0}
  \]
  by Theorem \ref{thm:EdgePoly}. The computation of
  $\Lambda^{B_0}=\lambda_{k,k}^{N_k}$ takes $\O(\log(d))$ operations
  over $\Ki_k$ via fast exponentiation. Also, there are at most $f_k$
  monomials $\Lambda^B$ to compute from Proposition
  \ref{prop:key}. Each of them can be computed in $\O(k\,\log(\vF))$
  operations in $\Ki_k$ via fast exponentiation on each
  $\lambda_{k,i}$ (we have
  $w_i\le \val(H_k(x,0))= N_{k}m_{k+1}/q_{k+1}$, thus
  $w_i+v_k(F)\le e_k\eta_k\le 2\vF$ by definition of $\eta_k$ and
  Proposition \ref{prop:eta}). This concludes.
\end{proof}

\paragraph{Testing quasi-degeneracy and computing edge data.}
\begin{prop}\label{prop:quasideg}
  Given $Q\in \Ki[Z]$ square-free and $\bar{H}\in \Ki_Q[x,y]$ monic in
  $y$ and quasi-homogeneous, there exists an algorithm \PDegenerated{}
  that returns \False{} if $\bar H$ is not quasi-degenerated, and the
  edge data $(q,m,P,N)$ of $\bar H$ otherwise. It takes at most
  $\Ot(\deg_Z(Q) \deg_y(\bar H)/q)$ operations over $\Ki$.
\end{prop}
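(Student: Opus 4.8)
The plan is to reduce the test to a univariate question over $\Ki_Q$. Write $d:=\deg_y(\bar H)$. Since $\bar H$ is monic in $y$, the point $(d,0)$ lies in its support; let $(i_1,j_1)$ be the support point with smallest first coordinate. If $i_1>0$, the monomial of $\bar H$ of lowest $y$-degree has positive $y$-degree, so $\bar H$ cannot have the shape of Definition~\ref{def:quasideg} (which forces, through $P(0)\in\Ki_Q^\times$, a nonzero monomial $x^{m\deg(P)N}$ at $i=0$), and we return \False. Otherwise $i_1=0$; set $g:=\gcd(d,j_1)$, $q:=d/g$, $m:=j_1/g$ and $t:=d/q=g$, so $\gcd(q,m)=1$. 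As $\bar H$ is quasi-homogeneous its support lies on the segment from $(0,j_1)$ to $(d,0)$, i.e. among the points $(qs,\,m(t-s))$ for $s=0,\dots,t$; collecting the coefficients $h_{qs,m(t-s)}$ of $\bar H=\sum h_{ij}x^jy^i$ we obtain $\bar H=x^{mt}\,R(y^q/x^m)$ with $R(Z):=\sum_{s=0}^t h_{qs,m(t-s)}Z^s\in\Ki_Q[Z]$. Note $R$ is monic of degree $t=\deg_y(\bar H)/q$ because $h_{d,0}=1$, and $R(0)=h_{0,j_1}$.

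I would then observe that, with $\gcd(q,m)=1$, the polynomial $\bar H$ is quasi-degenerated if and only if $R(0)\in\Ki_Q^\times$ and $R=P^N$ for some monic $P\in\Ki_Q[Z]$ and some $N\in\Ni$. Indeed, expanding Definition~\ref{def:quasideg} yields $\bar F=x^{mN\deg(P)}P(y^q/x^m)^N$, which matches $x^{mt}R(y^q/x^m)$ exactly when $R=P^N$ and $t=N\deg(P)$; and, $R$ being then a power of $P$, asking $P$ square-free with $P(0)\in\Ki_Q^\times$ is equivalent to asking $R(0)=P(0)^N\in\Ki_Q^\times$ together with $P$ equal to the radical of $R$. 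The algorithm \PDegenerated{} thus proceeds as follows: test whether $R(0)$ is a unit in $\Ki_Q$, returning \False{} if not; compute the radical $P:=R/\gcd(R,R')$ of $R$ --- this is the square-free part of $R$ since the characteristic does not divide any positive integer $\le t$ (it being $0$ or larger than $\deg(F)\ge\deg_y(\bar H)\ge t$), so no multiplicity can vanish; check that $\ell:=\deg(P)$ divides $t$, set $N:=t/\ell$, recompute $P^N$, and return $(q,m,P,N)$ if $P^N=R$ and \False{} otherwise. Correctness is precisely the equivalence just stated; in particular, when $R(0)$ is a unit and $R=P^N$ then $P(0)^N=R(0)$ is a unit, hence so is $P(0)$, while $P$ is square-free being a radical, so the returned tuple is a genuine edge data in the sense of Definition~\ref{def:quasideg}.

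For the complexity, reading $R$ off $\bar H$ and the final comparison $P^N=R$ each cost $\O(t)$ operations in $\Ki_Q$; the unit test of $R(0)$ is one attempted inversion; the radical is obtained via one fast $\gcd$ and one Euclidean division in $\Ot(t)$ operations in $\Ki_Q$ \cite[Chapter~11]{GaGe13}; and $P^N$ is computed by $\O(\log N)$ squarings of polynomials of degree at most $t$, again $\Ot(t)$ operations in $\Ki_Q$. Since $Q$ is square-free, $\Ki_Q$ is a product of fields and an arithmetic operation there costs $\Ot(\deg_Z(Q))$ operations over $\Ki$, so the total is $\Ot(t\,\deg_Z(Q))=\Ot(\deg_Z(Q)\,\deg_y(\bar H)/q)$. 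I expect the only delicate point to be implementational rather than conceptual: the $\gcd$ and the inversions carried out over $\Ki_Q$ may run into zero divisors, in which case one resorts to dynamic evaluation, splitting $Q$ as needed; as usual this only multiplies the running time by logarithmic factors, so the stated bound is unaffected. Apart from that, the heart of the argument is the reduction, above, to the univariate question ``is $R$ a perfect power of its radical, with a unit constant term?'', which is then settled by routine fast arithmetic.
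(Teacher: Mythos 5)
Your proof is correct and follows essentially the same route as the paper: extract the residual polynomial $R=P_0$ from the quasi-homogeneous $\bar H$, test whether it is a power of a square-free polynomial over $\Ki_Q$, check the constant term is a unit by a gcd with $Q$, and absorb zero-divisor issues via dynamic evaluation at only logarithmic extra cost. The only difference is that the paper delegates the power-of-a-square-free test to \cite[Proposition 14]{PoWe17}, whereas you open that black box explicitly (radical via $R/\gcd(R,R')$, valid since the characteristic exceeds the relevant degrees, followed by a fast-exponentiation check $P^N=R$), which matches the stated $\Ot(\deg_Z(Q)\deg_y(\bar H)/q)$ bound.
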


\begin{proof} 
  As $\bar H$ is quasi-homogeneous, we have
  $\bar H=P_0(y^q/x^m)x^{m\deg(P_0)}$ for some coprime integers
  $q,m\in \Ni$ and some $P_0\in \Ki_{Q}[T]$ of degree
  $\deg_y(\bar H)/q$. We need to check if $P_0=P^{N}$ for some $N\in \Ni$
  and $P\in\Ki_Q[T]$ square-free $($i.e $(Q,P)$ radical ideal in
  $\Ki[Z,T])$, and that $P(0)\notin\Ki_Q^\times$. The first task is a
  special case of \cite[Proposition 14]{PoWe17} and fits in the aimed
  bound. Second one is just a gcd computation, bounded by
  $\Ot(\deg_Z(Q))$.
\end{proof}

\subsection{The main algorithm. Proof of Theorem
  \ref{thm:main}}\label{ssec:proofs}

\begin{algorithm}[H]
  \nonl\TitleOfAlgo{\PIrr{}($F,\eta=1$)\label{algo:PIrr}}%
  \KwIn{$F\in\Ki[[x]][y]$ Weierstrass of degree $\dy$ not divisible by
    $\Char(\Ki)$.}%
  \KwOut{\False{} if $F$ is not quasi-irreducible, and $(\D,Q)$
    otherwise, with $\D$ the edge data of $F$ and $\Ki_g=\Ki_Q$.}%
  $F\gets F\mod x^{\eta}$ \tcp*{All computations modulo $x^\eta$}%
  $N\gets \dy$, $V\gets [1,0]$, $\Lambda\gets [1,1]$, $\Psi\gets[x]$,
  $Q\gets Z$, $(e,\eta')\gets (1,0)$, $\D\gets[\,]$\;%
  \While{$N > 1$}{%
    $\Psi\gets \Psi\cup \AppRoot{}(F,N)$\label{Pirr:approx}\;%
    $\bar H\gets
    \Edgepoly{}(F,\Psi,V,\Lambda)$
    \label{Pirr:edge}\tcp*{$\bar H\in \Ki_Q[x,y]$}%
    $e\gets q\,e$ ; $\eta'\gets \eta'+ \frac{N\,
      m}{e}$ \label{Pirr:etaprim}\tcp*{$(q,m)$ the lowest edge of $\bar
      H$}%
    \lIf{$\eta \le\eta'$}{\Return
      \PIrr{}($F,2\eta$)}\label{Pirr:callback}%
    $(Bool,(q,m,P,N))\gets$ \PDegenerated$(\bar
    H,Q)$ \label{Pirr:degen}\;%
    \lIf{$Bool=$ \False}{\Return{\False}}%
    $\D\gets \D\cup (q,m,P,N)$\;%
    Update the lists $V,\Lambda$ using \eqref{eq:update}\;%
    $(Q,\tau)\gets$ \Primitive{}$(Q,P)$\label{Pirr:prim}\;%
    $\Lambda\gets \tau(\Lambda)$\label{Pirr:updatelambda}\;%
  }%
  \Return $(\D,Q)$\;%
\end{algorithm}

\begin{prop}\label{prop:fastIrr}
  Running \PIrr{}$(F)$ returns the correct output in an expected
  $\Ot(\vF)$ operations over $\Ki$.
\end{prop}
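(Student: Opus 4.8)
The plan is to combine correctness (already granted by Theorem \ref{thm:Irr} and the remarks on quasi-irreducibility) with a line-by-line accounting of the loop, bounding the total cost of each subroutine over all iterations by $\Ot(\vF)$. First I would settle the correctness/termination part: by Lemma \ref{lem:quasiIrrvsIrr} and the analysis of truncation bounds in Subsection \ref{ssec:trunc}, running the loop modulo $x^{\eta(F)+1}$ with $\eta(F)\le 2\vF/\dy$ suffices, and the doubling strategy at lines \ref{Pirr:etaprim}--\ref{Pirr:callback} (Remark \ref{rem:precision}) guarantees we eventually reach a sufficient precision while only multiplying the final cost by a logarithmic factor. One also checks that the guard $\eta\le\eta'$ triggers a restart whenever the current precision is provably insufficient, using Lemma \ref{lem:etaF} to compute $\eta'=\eta_k$ incrementally; the number of restarts is $\O(\log\eta(F))=\Ot(1)$.

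Next I would bound one full pass of the loop at precision $x^{\eta+1}$ with $\eta\in\Ot(\vF/\dy)$, working over $\Ai=\Ki[x]/(x^{\eta+1})$ so that one $\Ai$-operation costs $\Ot(\eta)=\Ot(\vF/\dy)$. The number of iterations is $g+1\le \log_2(\dy)$ since the $N_k$ are strictly decreasing and each divides $\dy$. At iteration $k$: \AppRoot{} costs $\M(\dy)$ operations over $\Ai$ by Proposition \ref{prop:approx}, i.e. $\Ot(\vF)$ over $\Ki$; summing over $k$ and using $\sum 1 \le \log(\dy)$ with Lemma \ref{lem:d-delta} keeps this $\Ot(\vF)$. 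The call to \Edgepoly{} is $\Ot(\vF+f_k^2)$ by Proposition \ref{prop:computeHbark}; since $f_k\le \dy$ and, more sharply, $(\dy-1)f\le\vF$ when $F$ is quasi-irreducible (Lemma \ref{lem:d-delta}), we get $f_k^2\le f_k\,\dy\le \vF\cdot\frac{\dy}{\dy-1}\in\Ot(\vF)$ along the quasi-irreducible branch, and in the reducible branch the algorithm stops, so the total over all surviving iterations is $\Ot(\vF)$. \PDegenerated{} is $\Ot(\deg_Z(Q)\deg_y(\bar H)/q_k)=\Ot(f_{k-1} N_k/q_k)$ by Proposition \ref{prop:quasideg}; using $N_{k-1}=q_k\ell_k N_k$ and $f_k=\ell_k f_{k-1}$ this is $\Ot(f_k N_k\cdot\frac{f_{k-1}}{f_k}\cdot\frac{N_{k-1}}{\ell_k N_k q_k}\cdot\ell_k)$, which telescopes against $f_kN_k=\dy$ and is $\Ot(\dy)=\Ot(\vF)$ when summed.

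Then I would handle the residue-ring bookkeeping, which is where the Las Vegas nature enters. At line \ref{Pirr:prim} the call to \Primitive{}$(Q,P)$ costs an expected $\O((\deg_T(Q)\deg_Z(P))^{(\omega+1)/2})=\O(f_k^{(\omega+1)/2})$ operations over $\Ki$ by Proposition \ref{prop:Prim}; since $(\omega+1)/2<2$ and $f_k^2\in\Ot(\vF)$ on the surviving branch, each such call is $\Ot(\vF)$, and there are at most $g\le\log(\dy)$ of them. The update $\Lambda\gets\tau(\Lambda)$ at line \ref{Pirr:updatelambda} costs, by the last bound of Proposition \ref{prop:Prim}, $\Ot(\deg_T(Q)^2\deg_Z(P))=\Ot(f_{k-1}^2\ell_k)$ per entry and $\O(k)$ entries, again dominated by $f_k^2\in\Ot(\vF)$; I would also note here that the hypothesis that $\Ki$ has enough elements for Proposition \ref{prop:Prim} either holds or is arranged by a preliminary (logarithmic-cost) algebraic extension of $\Ki$, which does not affect the $\Ot(\vF)$ bound. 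Finally the \texttt{Update} step using \eqref{eq:update} is a constant number of fast exponentiations and multiplications in $\Ki_k$, hence $\Ot(f_k)\subseteq\Ot(\vF)$ per iteration.

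The main obstacle, and the point deserving the most care, is the interplay between the reducible branch and the complexity claim: a priori $f_k^2$ or $f_k^{(\omega+1)/2}$ need not be $\Ot(\vF)$ before we know $F$ is quasi-irreducible, since Lemma \ref{lem:d-delta}'s sharp bound $(\dy-1)f\le\vF$ is conditional. The resolution is that the expensive operations at iteration $k$ (the \Edgepoly{} cost $f_k^2$, the \Primitive{} cost, the $\Lambda$-update) only occur if iterations $0,\dots,k$ all \emph{survive} the degeneracy tests, and surviving through iteration $k$ forces the restriction of the data up to level $k$ to look quasi-irreducible; one then applies Lemma \ref{lem:d-delta} to the partial information — concretely, $\sum_{j\le k} f_j(N_j-1)\le \sum_j f_j N_j \cdot(\text{slope data})$, or more directly the bound $f_k\le \dy$ together with $f_k(\dy/N_k-1)\le$ (a partial discriminant-valuation quantity) $\le\vF$ — to absorb the square into $\Ot(\vF)$. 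Once this monotone-accumulation argument is in place, summing all the per-iteration bounds, multiplying by the $\Ot(1)$ restart factor and the $\Ot(1)$ factor from $k\le\log\dy$, and converting $\Ai$-operations to $\Ki$-operations via the precision $\eta\in\Ot(\vF/\dy)$, gives the claimed expected $\Ot(\vF)$ total. $\hfill\square$
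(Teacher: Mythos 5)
Your proposal is correct and follows essentially the same route as the paper's proof: correctness via Theorem \ref{thm:Irr} and the truncation/restart analysis of Remark \ref{rem:precision}, $g\le\log_2(\dy)$ iterations, the same per-line cost bounds from Propositions \ref{prop:approx}, \ref{prop:computeHbark}, \ref{prop:quasideg} and \ref{prop:Prim}, and the key observation that $f_k^2\in\Ot(\vF)$ because, as long as no test has failed, the data ``looks'' quasi-irreducible so that Lemma \ref{lem:d-delta} still gives $(\dy-1)f_k\le\vF$. The only blemish is that the ``concrete'' inequalities you sketch in the final paragraph are garbled, but the underlying argument is exactly the paper's parenthetical remark, stated at a comparable level of detail.
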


\begin{proof}
  The polynomial $\bar H$ at line \ref{Pirr:degen} is the correct
  lowest \edgepoly{} thanks to Lemma \ref{lem:etaF} (see also Remark
  \ref{rem:precision}). Then correctness follows from Theorem
  \ref{thm:Irr} and Definition \ref{def:quasi}. As $q_k\ell_k \ge 2$,
  we have $g\le \log_2(\dy)$, while recursive calls of line
  \ref{Pirr:callback} multiply the complexity by at most a logarithm
  too. Considering one iteration, and using
  $\eta< 2\,\eta(F)\leq 4\,\vF/\dy$ (second inequality by Proposition
  \ref{prop:eta}), lines \ref{Pirr:approx}, \ref{Pirr:edge},
  \ref{Pirr:degen}, \ref{Pirr:prim} and \ref{Pirr:updatelambda} cost
  respectively $\Ot(\vF)$, $\Ot(\vF+f_k^2)$,
  $\Ot(f_k N_k/q_k)\subset \Ot(\dy)$, $\Ot(f_k^{(\omega+1)/2})$ and
  $\Ot(f_k^2)$ from respectively Propositions \ref{prop:approx},
  \ref{prop:computeHbark}, \ref{prop:quasideg}, \ref{prop:Prim} and
  \ref{prop:Prim} once again. Summing up, we conclude from Lemma
  \ref{lem:d-delta} (note that if $F$ is not quasi-irreducible, as
  long as the algorithm does not output \False{}, $F$ ``looks''
  quasi-irreducible, so that we still have $(\dy-1)\,f_g\leq\vF$).
\end{proof}

\paragraph{Proof of Theorem \ref{thm:main}.} 
Thanks to Lemma \ref{lem:quasiIrrvsIrr}, $F$ is irreducible if and
only if it is quasi-irreducible and the residue ring
$\Ki_g=\Ki[Z]/(Q(Z))$ is a field. This can be checked with a
univariate irreducibility test in $\Ki[Z]$ of degree
$\deg(Q)=f\le \dy$. $\hfill\square$

Note that there are well known formulas for the valuation of the
discriminant $\vF$ in terms of the edge data, see
e.g. \cite[Corollary 5]{PoWe19}.

\begin{xmp}\label{xmp:splitting}
  Let us illustrate algorithm \PIrr{} on a simple example, considering
  $F=(y^4-x^2)^4+y^6x^{11}-y^4x^{12}-y^2x^{13} +x^{14}+x^{16}$ with $\Ki=\Qi$.

  \emph{Initialisation.} We start with $N_0=\dy=16$, $\psi_{-1}=x$,
  $V=(1,0)$ and $\lambda=(1,1)$.

  \emph{Step $0$.} The $16^{th}$-approximate roots of $F$ is
  $\psi_0=y$ and we find $\bar{H}_0=(y^4-x^2)^4$. So $H_0$ is
  quasi-degenerated with edge data
  $(q_1,m_1,P_1,N_1)=(2,1,Z_1^2-1,4)$. Using
  \eqref{eq:update}, we update $V=(2,1,4)$ and
  $\lambda=(z_1,z_1,4z_1)$, with $z_1=Z_1\mod P_1$ (i.e.  $z_1^2=1$).
  
  \emph{Step $1$.} We compute the $4^{th}$-approximate root
  $\psi_1=y^4-x^2$ of $F$, then its $\Psi$-adic expansion
  $F = \psi_1^4 + \psi_{-1}^{11} \psi_0^2 \psi_1 - \psi_{-1}^{12}
  \psi_1 + \psi_{-1}^{16}$.
  All involved monomials reach the minimal values \eqref{eq:wj}, and
  we deduce from \eqref{eq:barHk} and equality $z_1^2=1 $ that
  $\bar{H}_1=y^4+\frac{(1-z_1)}{4^3} x^{12}y+\frac{1}{4^4}x^{16}$,
  which is quasi-homogeneous with slope $(q_2,m_2)=(1,4)$. We find
  that $P_0=Z_2^4+ \frac{(1-z_1)}{4^3}Z_2+\frac{1}{4^4}$ is square-free
  over $\Qi_1$. Hence, $\bar{H}_1$ is quasi-degenerated with edge data
  $(q_2,m_2,P_2,N_2)=(1,4,P_0,1)$. As $N_2=1$, we deduce that $F$ is
  quasi-irreducible. However, the last residue field
  $\Qi_2=\Qi[Z_1,Z_2]/(P_1,P_2)$ is not a field so $F$ is not
  irreducible (in practice, the algorithm would have computed
  $Q\in \Qi[Z]$ of degree $8$ such that $\Qi_2=\Qi[Z]/(Q(Z))$, and
  eventually check the irreducibility of $Q$).
\end{xmp}

\begin{rem}\label{rem:sqrfree}
  The polynomial $Q$ might factor during the square-free test made at
  Line \ref{Pirr:degen}. In such a case, $F$ is reducible and we
  should of course immediately return \False{} at this stage. For
  instance testing square-freeness of $P_0$ in Example
  \ref{xmp:splitting} requires to compute the gcd between $P_0$ and
  its derivative $P_0'$. The first euclidean division gives
  $P_0=\frac{Z_2}{4}\,P_0'+R$ with
  $R=\frac 3{4^4}\,(1-z_1)\,Z_2+\frac 1 {4^4}$. Before proceeding to
  the next division of $P_0'$ by $R$, we need to check first that the
  leading coefficient of $R$ is a unit in $\Qi_1$.  To this aim, we
  compute the gcd between $\frac 3{4^4}\,(1-Z_1)$ and $P_1$,
  discovering here that $Z_1-1$ divides $P_1$ so that $P_1$ is
  reducible. Hence $F$ is reducible and we could have returned
  \False{} at this point. We did not take into account this obvious
  improvement in our algorithm for readibility.
\end{rem}

\subsection{Further comments}\label{ssec:comments}

\paragraph{Factorisation of quasi-irreducible polynomials.}
Not returning \False{} when discovering a factor of $Q$ also makes
sense if we want further informations about the factorisation of
$F$. Namely, if $F$ is quasi-irreducible, then we can deduce from the
field decomposition of $\Ki_g$ the number of irreducible factors of
$F$ in $\Ki[[x]][y]$ together with their residual degrees and index of
ramification. In Example \ref{xmp:splitting} above, we find the field
decomposition:
\[
  \Qi_2\simeq\frac{\Qi[Z_1,Z_2]}{(Z_1-1,Z_2^4+1)}\oplus
  \frac{\Qi[Z_1,Z_2]}{(Z_1+1,Z_2-1)}\oplus
  \frac{\Qi[Z_1,Z_2]}{(Z_1+1,Z_2^3+Z_2^2+Z_2-1)}.
\]
It follows that $F$ has three irreducible factors in $\Qi[[x]][y]$ of
respective residual degrees $4,1,3$ (which are given together with
their residue fields) and ramification index $q_1q_2=2$. In
particular, they have respective degrees $8,2,6$.

In fact, quasi-irreducible polynomials behave like irreducible
polynomials, in the sense that they are ``balanced'': all their
absolutely irreducible factors in $\algclos{\Ki}[[x]][y]$ have same
sets of characteristic exponents and same sets of pairwise
intersection multiplicities. These important data can be deduced from
the edge data, see \cite[Section 8]{PoWe19}; they characterise the
equisingular type of the germ of curve $(F,0)$, which coincides with
the topological equivalent class in the case $\Ki=\Ci$. Unfortunately,
$F$ might be balanced without being quasi-irreducible. In order to
characterise balanced polynomials, we need to modify slightly
Definition \ref{def:quasideg}, allowing several edges when
$q=1$. These aspects are considered in the longer preprint
\cite{PoWe19} and will be published in a forthcoming paper.

\paragraph{The case of non Weierstrass polynomials.} Up to minor
changes, we can use algorithm \PIrr{} to test the irreducibility of
any square-free polynomial $F\in \Ki[[x]][y]$, without assuming $F$
Weierstrass. If $\NP(F)$ is not straight, then $F$ is reducible. If
$\NP(F)$ is straight with positive slope, we replace $F$ by its
reciprocal polynomial. The leading coefficient is now invertible and
we are reduced to consider the case $F$ monic. Then, algorithm \PIrr{}
works exactly as in the Weierstrass case. However, the bound
$(\dy-1)\,f\leq\vF$ of Lemma \ref{lem:d-delta} does not hold
anymore. To get a similar complexity, we need to modify slightly the
algorithm: we do not compute primitive elements of $\Ki_k$ over the
field $\Ki$ but only over the next residue ring $\Ki_1=\Ki_{P_1}$. It
can be shown that the complexity becomes $\Ot(\vF+\dy)$. Moreover, we
eventually get a bivariate representation
$\Ki_g=\Ki[Z_1,Z](P_1(Z_1),Q(Z_1,Z))$ and checking that $\Ki_g$ is a
field requires now two univariate irreducibility tests of degree at
most $d$. See \cite[Section 7.4]{PoWe19} for details.

\paragraph{Bivariate polynomials.} If the input $F$ is given as a
bivariate polynomial $F\in \Ki[x,y]$ with partial degrees
$\dx:=\deg_x(F)$ and $\dy=\deg_y(F)$, we get a complexity estimate
$\Ot(\dx\dy)$ which is quasi-linear with respect to the arithmetic
size of the input. Moreover, we need not to assume $F$
square-free. Namely, we first reduce to the monic case as explained in
the previous paragraph. Then, we run algorithm \PIrr{} with parameters
$F$ and $4\,\dx$, except that we return \False{} whenever test of line
\ref{Pirr:callback} fails. If $F$ is square-free, we have the well
known inequality $\vF\le 2\dx \dy$ so that $\eta(F)\le 4\, \dx$: the
algorithm will return the correct answer with at most $\Ot(\dx\, \dy)$
operations over $\Ki$ as required, and so without reaching a value $\eta'>4\,\dx$ at Line \ref{Pirr:callback}. If $F$ is not square-free, then $\bar{H}_k$ is never
square-free. Hence, we will never reach the case $N_k=1$ and we end up with
three possibilities:
\begin{itemize}
\item we reach a value $\eta'>4\,\dx$ at Line \ref{Pirr:callback},
  ensuring the non square-freeness (hence the non quasi-irreducibility) of $F$;
\item the function call at Line \ref{Pirr:degen} returns \False{} and $F$ is not quasi-irreducible;
\item the function call at Line \ref{Pirr:degen} computes an edge data which satisfies $q=\deg(P)=1$ (this happens exactly when we compute an approximate root $\psi$ such that $F=\psi^N$ modulo $x^{4\,\dx+1}$ for some $N>1$). As this can not happen when $F$ is square-free, we deduce that $F$ is not square-free, hence not quasi-irreducible.
\end{itemize}
As we always truncate the powers of $x$ with precision $4\,\dx$, we
will return \False{} within an expected $\Ot(\dx \,\dy)$ operations
over $\Ki$ in all three cases. Note that in the second case, we can not conclude if $F$ is square-free or not. 

\paragraph{Absolute irreducibility.} We say that $F\in \Ki[[x]][y]$ is
absolutely irreducible if it is irreducible in
$\overline{\Ki}[[x]][y]$, that is if $F$ is quasi-irreducible and
$f_g=1$. To check this we can slightly modify algorithm \PIrr{}: just
return \False{} whenever $\ell_k>1$. We thus have
$\Ki_k=\Ki$ for all $k$, and do not need the Las-Vegas
subroutine \Primitive{}, nor any univariate irreducibility test. We
obtain a deterministic algorithm running with $\Ot(\vF+d)$ operations
over $\Ki$, which is $\Ot(\vF)$ is $F$ is Weierstrass. Also, we could
have used algorithm \AbhyankarMoh{} below with suitable precision for
the same cost.


\section{Abhyankhar's absolute irreducibility
  test}\label{sec:absolute}

Abhyankhar's absolute irreducibility test avoids any Newton-Puiseux
type transforms or Hensel type liftings. In fact, it is even stronger
as it does not require to compute the \edgepoly{}s
$\bar{H}_k$: knowing their Newton polygon is sufficient. Although we
do not need this improvement from a complexity point of view, we show
how to recover this result in our context for the sake of
completness. We will use the following alternative characterisations
of valuations and polygons. For convenience, we will rather compute the translated polygon $\NP_k(F):=\NP(H_k)+(0,v_k(F))$, which by \eqref{eq:pikHk} coincides with the union of edges of strictly negative slopes of $\NP(\pi_k^*(F))$. 

\begin{lem}\label{lem:samevk}
  Suppose that $H_0,\ldots,H_{k-1}$ are degenerated.
  \begin{enumerate}
  \item\label{enum:NkF} Write $F=\sum c_i\psi_k^i$ the $\psi_k$-adic
    expansion of $F$. Then $\val[k](F)=\min_i \val[k](c_i\psi_k^i)$
    and
    \begin{equation}\label{defpolygk}
      \NP_k(F)=\Conv\left(\left(i,\val[k](c_i\psi_k^i)\right)+(\Ri^+)^2, \, c_i\ne 0\right).
    \end{equation}
  \item\label{enum:vk} Let $k\ge 1$ and $G\in \Ki[[x]][y]$ with $\psi_{k-1}$-adic
    expansion $G=\sum_i a_i\psi_{k-1}^i$. We have
    \begin{equation}\label{newdefvk}
      \val[k](G)=\min_i \left(q_k v_{k-1}(a_i\psi_{k-1}^i)+i m_k \right).
    \end{equation}
  \end{enumerate}
\end{lem}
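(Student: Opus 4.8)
\emph{Overall strategy.} Both items are proved with the machinery already developed for Lemma~\ref{lem:vpsiphi} and Theorem~\ref{thm:HPsiPhi}: the explicit shape \eqref{eq:ajpsij} of the pull-back of a monomial of a $\psi$-adic expansion, together with the recursions of Lemmas~\ref{lem:vLambda} and~\ref{lem:FPhi}. For item~\ref{enum:vk} one additionally factors $\pi_k=\pi_{k-1}\circ\sigma_k$ and pushes the valuation $v_{k-1}$ forward through the transform $\sigma_k$ of \eqref{eq:tausigma}. In both cases the only real work is to rule out cancellation, i.e.\ to check that the minimum defining the valuation (and, in item~\ref{enum:NkF}, every vertex of the claimed polygon) is genuinely attained after applying $\pi_k^*$.

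\emph{Item~\ref{enum:NkF}.} Since $\psi_k$ is the $N_k^{th}$ approximate root, \eqref{eq:ajpsij} applies to the $\psi_k$-adic expansion $F=\sum_i c_i\psi_k^i$: there are a \emph{fixed} $\alpha>m_{k+1}/q_{k+1}$ and a fixed unit $\tilde U\in\Ki_k[[x,y]]^\times$ with
\[
\pi_k^*(c_i\psi_k^i)=x^{v_k(c_i\psi_k^i)}\,(y+x^{\alpha}\tilde U)^i\,U_i,\qquad U_i(0,0)=\lambda_k(c_i\psi_k^i)\neq 0,
\]
for every $i$ with $c_i\neq 0$ (Corollary~\ref{cor:gprime} handles $\pi_k^*(c_i)$, as $\deg c_i<d_k$). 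Summing over $i$ and setting $x=0$, the coefficient of $y^{i_0}$, where $i_0$ is the \emph{smallest} index realising $\min_i v_k(c_i\psi_k^i)$, equals $\lambda_k(c_{i_0}\psi_k^{i_0})\neq 0$; hence $v_k(F)=\min_i v_k(c_i\psi_k^i)$. For the polygon, the support of $\pi_k^*(c_i\psi_k^i)$ lies in $\{(j,\mu):j\le i,\ \mu\ge v_k(c_i\psi_k^i)+(i-j)\alpha\}$, and since $H_0,\dots,H_{k-1}$ are degenerated, $\NP(H_k)$ is (for $k<g$) the single edge of slope $-m_{k+1}/q_{k+1}>-\alpha$; the convexity estimate from the proof of Lemma~\ref{lem:vpsiphi} then shows that no monomial of $\pi_k^*F$ falls below the lower hull of $\{(i,v_k(c_i\psi_k^i)):c_i\neq0\}$, while every point of that hull survives in $\pi_k^*F$ with coefficient $\lambda_k(c_i\psi_k^i)\neq 0$. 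With \eqref{eq:pikHk} this is \eqref{defpolygk}. (For $k<g$ one may alternatively just quote Theorems~\ref{thm:NPphi} and~\ref{thm:HPsiPhi}, Corollary~\ref{cor:gprime} and Lemma~\ref{lem:FPhi}; at $k=g$ the argument above still recovers the lowest edge of $\NP_g(F)$, which matches the corresponding restriction in Theorem~\ref{thm:HPsiPhi} and is all that is used later.)

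\emph{Item~\ref{enum:vk}.} Write $\pi_k^*=\sigma_k^*\circ\pi_{k-1}^*$. The same reasoning at rank $k-1$ (Lemma~\ref{lem:vpsiphi} at $k-1$, Corollary~\ref{cor:gprime}, and point~\ref{enum:ukk} of Lemma~\ref{lem:PikPhi}) gives, for $a_i\neq0$,
\[
\pi_{k-1}^*(a_i\psi_{k-1}^i)=x^{v_{k-1}(a_i\psi_{k-1}^i)}\,(y+x^{\beta}\tilde U)^i\,U_i,\qquad \beta>m_k/q_k,
\]
with $\beta,\tilde U$ independent of $i$, $U_i$ a unit and $U_i(0,0)=\lambda_{k-1}(a_i\psi_{k-1}^i)\neq0$. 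Applying $\sigma_k^*$ and using $\sigma_k^*(x)=z_k^{t_k}x^{q_k}$, $\sigma_k^*(y)=x^{m_k}(y+z_k^{s_k}+c_k(x))$ together with $c_k(0)=0$, $m_k>0$ and $q_k\beta>m_k$, each summand becomes $\pi_k^*(a_i\psi_{k-1}^i)=x^{\,q_kv_{k-1}(a_i\psi_{k-1}^i)+i\,m_k}\,W_i$ with $W_i$ a unit whose restriction to $x=0$ is a nonzero $\Ki_k$-multiple of $(y+z_k^{s_k})^i$. Since $\sigma_k$ contracts the line $\{x=0\}$ to the origin, summands of distinct $y$-degree cannot interfere there, and as $\{(y+z_k^{s_k})^i\}_{i\ge0}$ is $\Ki_k$-linearly independent, the lowest-$x$-power part of $\sum_i\pi_k^*(a_i\psi_{k-1}^i)$ does not vanish. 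Hence $v_k(G)=\min_i\bigl(q_kv_{k-1}(a_i\psi_{k-1}^i)+i\,m_k\bigr)$.

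\emph{Main obstacle.} The delicate point in both items is precisely the non-cancellation just invoked. In item~\ref{enum:NkF} it rests on the extra slope $\alpha$ gained in replacing the minimal polynomial $\phi_k$ by the approximate root $\psi_k$ being \emph{strictly steeper} than the edges of $\NP(H_k)$ --- which is why, when $k=g$ and $\NP(H_g)$ may have several edges, only the lowest edge is recovered; in item~\ref{enum:vk} it rests on $\sigma_k$ contracting $\{x=0\}$ to a point, which keeps monomials of different $y$-degree from interfering after pull-back.
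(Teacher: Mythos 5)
Your handling of the two valuation identities is correct and is essentially the paper's own argument: for item~\ref{enum:NkF} you use \eqref{eq:ajpsij} and the fact that the lowest $x$-order part of $\pi_k^*(c_i\psi_k^i)$ has trailing $y$-degree exactly $i$, so the minima cannot cancel; for item~\ref{enum:vk} you factor $\pi_k^*=\sigma_k^*\circ\pi_{k-1}^*$ and observe that the lowest $x$-order part of $\pi_k^*(a_i\psi_{k-1}^i)$ is a nonzero $\Ki_k$-multiple of $(y+z_k^{s_k})^i$, which is exactly the paper's ``$P_i\in\Ki[y]$ of degree exactly $i$''. Two small slips are harmless but worth fixing: the support of $\pi_k^*(c_i\psi_k^i)$ is \emph{not} contained in $\{j\le i\}$ (the unit $U_i$ contributes higher powers of $y$), though those extra points lie in the quadrant based at $\left(i,v_k(c_i\psi_k^i)\right)$ and do not disturb the hull; and in item~\ref{enum:vk} the operative fact is the $\Ki_k$-linear independence of the $(y+z_k^{s_k})^i$ (equivalently $\deg P_i=i$), not the geometric remark that $\sigma_k$ contracts $\{x=0\}$.

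The genuine issue is your treatment of \eqref{defpolygk} at $k=g$. The lemma asserts the full polygon equality whenever $H_0,\ldots,H_{k-1}$ are degenerated, which includes $k=g$, where $\NP(H_g)$ may have several edges; your convexity argument only controls monomials against slopes gentler than $\alpha$, and since Lemma~\ref{lem:vpsiphi} only guarantees $\alpha>m_{g+1}/q_{g+1}$ (the slope of the \emph{lowest}, i.e.\ gentlest, edge), you explicitly retreat to ``only the lowest edge is recovered at $k=g$''. That proves strictly less than the statement, and your justification that the lowest edge ``is all that is used later'' is an appeal to downstream usage rather than a proof of the lemma --- note that the proof of Proposition~\ref{prop:abhyankhar} does invoke \eqref{defpolygk} at rank $g$ to identify the computed hull with $\NP_g(F)$ in the straightness test (that particular inference can in fact be salvaged from the lowest-edge version, but you neither state nor check this). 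The paper instead derives \eqref{defpolygk} by combining Theorem~\ref{thm:NPphi} (the true polygon is the hull of the $\Phi$-adic values $w_i$) with Corollary~\ref{cor:gprime} (which identifies your $v_k(c_i\psi_k^i)-v_k(F)$ with the $\Psi$-adic values $w_i'$ of \eqref{eq:wj}) and Theorem~\ref{thm:HPsiPhi}; the caveat in Theorem~\ref{thm:HPsiPhi} at $k=g$ is precisely the subtlety you ran into. To match the statement you should either supply an argument for the hull equality above the lowest edge when $k=g$, or explicitly record the weaker version you actually prove and verify it suffices at every place the lemma is applied.
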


\begin{proof}
  Equality \eqref{defpolygk} is a direct consequence of Corollary
  \ref{cor:gprime} with Theorems \ref{thm:NPphi} and
  \ref{thm:HPsiPhi}. Also, from \eqref{eq:ajpsij},
  $\pi_k^*(c_i\psi_k^{i})$ has a term of lowest $x$-valuati
on of shape
  $ux^{\val[k](a_i\,\psi_k^i)}\,y^i$ for some $u\in \Ki_k^\times$ and
  it follows that $\val[k](F)=\min_i \val[k](c_i\psi_k^i)$. This
  proves Point \ref{enum:NkF}.

  Applying \eqref{eq:ajpsij} at rank $k-1$, we get
  $\pi_{k-1}^*(a_i\psi_{k-1}^i)=x^{v_{k-1}(a_i\psi_{k-1}^i)}\,(y+x^{\alpha}\tilde{U}_i)^i\,U_i$,
  where $\alpha> m_k/q_k$, and $U_i,\tilde{U}_i$ are units. As
  $m_k>0$, we deduce that
  $V_i=U_i(z_k^{s_k}x^{q_k},x^{m_k}(y+z_k^{t_k}+c_k(x))$ is a unit
  such that $V_i(0,y)=U_i(0,0)\in \Ki_k^\times$ is constant and a
  straightforward computation shows
  $\pi_{k}^*(a_i\psi_{k-1}^i)=x^{q_k v_{k-1}(a_i\psi_{k-1}^i)+i
    m_k}P_i(y)+h.o.t$, where $P_i\in \Ki[y]$ has degree exactly
  $i$. Equality \eqref{newdefvk} follows.
\end{proof}

\begin{rem}\label{rem:refMontes}
  Point \ref{enum:vk} in Lemma \ref{lem:samevk} shows that our valuations coincide
  with the extended valuations used in the Montes algorithm over general
  local fields; see for instance \cite[Point (3) of Proposition
  2.7]{GuMoNa12}.
\end{rem}

Hence, we may take \eqref{defpolygk} and \eqref{newdefvk} as
alternative recursive definitions of valuations and Newton
polygons. This new point of view has the great advantage to be
independent of the map $\pi_k$, hence of the Newton-Puiseux
algorithm. In particular, it can be generalised at rank $k+1$ without
assuming that $H_k$ is degenerated.

\begin{dfn}\label{def:alternative}
  Suppose that $H_0,\ldots,H_{k-1}$ are degenerated and let
  $-m_{k+1}/q_{k+1}$ be the slope of the lowest edge of $H_k$. We
  still define the valuation $v_{k+1}$ and the Newton polygon
  $\NP_{k+1}(F)$ by formulas \eqref{newdefvk} and \eqref{defpolygk}
  applied at rank $k+1$.
\end{dfn}

\begin{rem}
This definition of the map $v_{k+1}$ is equivalent to
\[
  v_{k+1}(G)=\min_{g_B\ne 0} \left(q_{k+1}\langle B, V \rangle
    +m_{k+1} b_k\right)
\]
where $G$ has $(\psi_{-1},\ldots,\psi_k)$-adic expansion
$G=\sum g_B\Psi^B$ and $V=(v_{k,-1},\ldots, v_{k,k})$. This is the
approach we shall use in practice to update valuations.
\end{rem}

We obtain the following absolute irreducibility test which only
depends on the geometry of the successive Newton polygons.

\begin{algorithm}[H]
  \nonl\TitleOfAlgo{\AbhyankarMoh($F$)\label{algo:AbMoh}}%
  \KwIn{$F\in\Ki[[x]][y]$ Weierstrass s.t. \Char($\Ki$) does not
    divide $\dy=\deg(F)$.}%
  \KwOut{\True{} if $F$ is irreducible in $\overline{\Ki}[[x]][y]$,
    \False{} otherwise.}%
  $N\gets \dy$, $v_0\gets\val$, $k\gets0$\;%
  \While{$N > 1$}{ $\psi\gets \AppRoot{}(F,N)$\;%
    $\sum c_i\psi^i \gets \Expand{}(F,\psi)$\;%
    Compute $\NP_k(F)$ using \eqref{defpolygk}\;%
    \If{$(N,v_k(F))\notin \NP_k(F)$ or $\NP_k(F)$ is not straight or
      $q=1$}{%
      \Return{\False}%
    }%
    $N\gets N/q$, $k\gets k+1$\;%
    Compute $v_k$ from $v_{k-1}$ via \eqref{newdefvk}\;%
  }%
  \Return{\True}\;%
\end{algorithm}

\begin{prop}\label{prop:abhyankhar}
  Algorithm \AbhyankarMoh{} works as specified.
\end{prop}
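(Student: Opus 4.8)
The plan is to establish both directions of ``works as specified'' by confronting \AbhyankarMoh{} with the absolute irreducibility variant of \PIrr{} described at the end of Section~\ref{sec:comp} --- the one returning \False{} as soon as some residual polynomial is non-linear (i.e.\ $\ell_k>1$). The link between the two is Lemma~\ref{lem:samevk} together with Definition~\ref{def:alternative}: as long as $H_0,\dots,H_{k-1}$ are degenerated, the valuation $v_k$ and the polygon $\NP_k(F)=\NP(H_k)+(0,v_k(F))$ recomputed from the $\psi_k$-adic expansion of $F$ by \eqref{newdefvk}--\eqref{defpolygk} agree with the true ones. Thus the tests performed by \AbhyankarMoh{} (straightness of $\NP_k(F)$, $q\neq 1$, and $(N_k,v_k(F))\in\NP_k(F)$) are exactly the ``Newton polygon part'' of the degeneracy test on $H_k$; the only check it omits is that the residual polynomial $P_{k+1}$ is linear.

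For the easy direction, suppose $F$ is irreducible in $\algclos{\Ki}[[x]][y]$. Then Proposition~\ref{prop:NPA} read over $\algclos{\Ki}$ shows that every $H_k$ is degenerated and $N_g=1$; each $P_k$ is moreover irreducible over the algebraically closed field $\algclos{\Ki}$, hence linear, so $\ell_k=1$, $N_{k+1}=N_k/q_{k+1}$ and $q_{k+1}>1$. Since $\psi_k$ is the $N_k$-th approximate root of $F$, Point~\ref{enum:NkF} of Lemma~\ref{lem:samevk} identifies the polygon computed by the algorithm with $\NP_k(F)$, all tests pass, and the loop exits at $N=1$ returning \True{}.

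The real content is the converse: if \AbhyankarMoh{} returns \True{}, then $F$ is irreducible over $\algclos{\Ki}$. I would induct on the step, with invariant ``steps $0,\dots,k-1$ passed $\Rightarrow$ $H_0,\dots,H_{k-1}$ degenerated with $\ell_i=1$''. Under the invariant at rank $k$ the polygon recomputed at step $k$ is the true $\NP(H_k)$, and passing the tests forces it straight with denominator $q_{k+1}>1$; so it remains to prove that then the residual polynomial $P_{k+1}$ is linear, i.e.\ $\bar H_k=c\,(y^{q_{k+1}}-z\,x^{m_{k+1}})^{N_{k+1}}$ for a single $z\in\algclos{\Ki}^{\times}$. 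This keeps the invariant alive, and when $N_g=1$ is reached Proposition~\ref{prop:NPA} over $\algclos{\Ki}$ yields irreducibility.

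The crux --- and the step I expect to be the main obstacle --- is therefore the key lemma: under the invariant at rank $k$, if $\bar H_k$ is \emph{not} a power of a single binomial, then \AbhyankarMoh{} returns \False{}, necessarily at step $k+1$ (step $k$ being passed by hypothesis). Over $\algclos{\Ki}$ the failure means $\bar H_k=\prod_j (y^{q_{k+1}}-z_j x^{m_{k+1}})^{a_j}$ with at least two distinct $z_j\in\algclos{\Ki}^{\times}$ (the sub-case $z_j=0$, where $y\mid H_k$, already breaks straightness or $(N_k,v_k(F))\in\NP_k(F)$ at step $k$). Correspondingly $F$ splits, after the transform $\pi_k$ of \eqref{eq:pikHk}, into several Weierstrass factors whose level-$(k+1)$ minimal polynomials genuinely diverge; the $(N_k/q_{k+1})$-th approximate root $\psi_{k+1}$ of $F$ used by the algorithm is their weighted ``average'' $\psi_k^{q_{k+1}}-\bar z\,x^{m_{k+1}}+\cdots$ with $\bar z=(\sum_j a_j z_j)/(\sum_j a_j)$ forced by $c'_{N_{k+1}-1}=0$, and it approximates none of them to the precision demanded by Lemma~\ref{lem:vpsiphi}. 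One then checks that the two end vertices $(0,v_{k+1}(c'_0))$ and $(N_{k+1},N_{k+1}v_{k+1}(\psi_{k+1}))$ of $\NP_{k+1}(F)$ sit at the same height --- because $\prod_j(\bar z-z_j)^{a_j}\neq 0$ --- so that either $\NP_{k+1}(F)$ has horizontal lowest edge (triggering the test $q=1$) or some intermediate monomial drops strictly below (triggering the test $(N_{k+1},v_{k+1}(F))\in\NP_{k+1}(F)$); in both cases the algorithm returns \False{}. The difficulty is genuinely concentrated here, since Lemma~\ref{lem:vpsiphi} and the whole of Section~\ref{sec:psi} are tailored to $\ell=1$ and do not apply verbatim once the residual polynomial factors: this flattening of $\NP_{k+1}(F)$ has to be re-proved directly from the recursion \eqref{newdefvk}, carefully separating the case of two branches with distinct next slopes (non-straightness already at step $k$, contradicting the hypothesis) from that of equal slopes but distinct $z_j$. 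Assembling these cases proves the key lemma, hence the backward implication, hence Proposition~\ref{prop:abhyankhar}.
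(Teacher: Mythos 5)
Your overall architecture is sound and matches the paper's: the forward direction is routine, the reduction via Lemma~\ref{lem:samevk} and Definition~\ref{def:alternative} to a single critical case is correct, and you have correctly located the crux, namely: if $H_0,\dots,H_{k-1}$ are degenerated with $\ell_i=1$, $\NP_k(F)$ is straight with $q_{k+1}>1$, but the residual polynomial has at least two distinct roots, then the algorithm must fail at step $k+1$. The problem is that your proof of this key lemma has a genuine gap. First, the description of $\psi_{k+1}$ as the ``weighted average'' $\psi_k^{q_{k+1}}-\bar z\,x^{m_{k+1}}+\cdots$ with $\bar z=(\sum_j a_j z_j)/(\sum_j a_j)$ is asserted, not proved, and proving it would require redoing the analysis of Section~\ref{sec:psi} outside the degenerated setting. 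Second, and more seriously, the pivot of your argument --- $\prod_j(\bar z-z_j)^{a_j}\neq 0$, from which you deduce that the two end vertices of $\NP_{k+1}(F)$ sit at the same height --- is simply false in general: the average $\bar z$ can coincide with one of the $z_j$ (take $z_1,z_2,z_3=1,2,3$ with equal multiplicities, so $\bar z=z_2$). In that situation $v_{k+1}(c'_0)$ can exceed $N_{k+1}v_{k+1}(\psi_{k+1})$, your claimed dichotomy (horizontal lowest edge, or an intermediate monomial strictly below) is not established by anything you wrote, and the argument does not close.

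The paper's proof sidesteps exactly this difficulty by never computing the shape of $\psi_{g+1}$. It argues: since $\bar H_g=\prod_{Q(\zeta)=0}(y^{q_{g+1}}-\zeta x^{m_{g+1}})$ with $Q$ having at least two distinct roots, $\bar H_g$ is not an $N_{g+1}$-th power, so $\pi_g^*(\psi_{g+1}^{N_{g+1}})$ and $\pi_g^*(F)$ cannot have the same \edgepoly{}; hence in the $\psi_{g+1}$-adic expansion $F=\sum_i c_i\psi_{g+1}^i$ there is \emph{some} index $i<N_{g+1}$ (not necessarily $i=0$) for which $\NP_g(c_i\psi_{g+1}^i)$ has a point on or below the straight polygon $\NP_g(F)$. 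Feeding this into the recursive definition \eqref{newdefvk} of $v_{g+1}$ gives $v_{g+1}(c_i\psi_{g+1}^i)\le v_{g+1}(F)$, which forces $(N_{g+1},v_{g+1}(F))\notin\NP_{g+1}(F)$, so the vertex test fails at the next iteration whatever the relative position of $\bar z$ and the $z_j$. This ``not an $N_{g+1}$-th power, hence some lower coefficient touches the polygon'' step is the missing idea you would need to replace your non-vanishing claim; as it stands, your key lemma is not proved.
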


\begin{proof}
  We need to show that it returns the same output as
  \irreducible($F,\overline{\Ki}$).  Suppose that $F$ is not
  absolutely irreducible. Let us abusively still denote by $g$ be the
  first index $k$ such that $H_k$ is not degenerated over
  $\overline{\Ki}$ or $N_k=1$: so both algorithms \AbhyankarMoh{}($F$)
  and \irreducible ($F,\overline{\Ki}$) compute the same data
  $\psi_0,\ldots,\psi_{g-1}$ and $(q_1,N_1),\ldots,(q_g,N_g)$. If
  $N_g=1$, then $F$ is absolutely irreducible, and both algorithms
  return \True{} as required. If $N_g>1$, then \irreducible
  ($F,\overline{\Ki}$) returns \False. As  $\NP_g(F)=\NP(H_g)+(0,v_g(F))$ (definition) and $H_g$ is
  Weierstrass of degree $N_g$, we have $(N_g,v_g(F))\in \NP_g(F)$ at
  this stage. If $\NP_g(F)$ is not straight or $q_{g+1}=1$, then so
  does $\NP(H_g)$ and \AbhyankarMoh{}($F$) returns \False{} as
  required. There remains to treat the case where $\NP_g(F)$ is
  straight with $q_{g+1}>1$ (still assuming $N_g>1$ and $H_g$ not
  degenerated over $\overline{\Ki}$). In such a case,
  \AbhyankarMoh{}($F$) computes the next $N_{g+1}^{th}$ approximate
  roots $\psi_{g+1}$ of $F$ where $N_{g+1}=N_g/q_{g+1}$. We will show
  that $(N_{g+1},v_{g+1}(F))\notin \NP_{g+1}(F)$ so that
  \AbhyankarMoh{} returns \False{} at this step.

  Let $F=\sum_{i=0}^{N_{g+1}} c_i \psi_{g+1}^{i}$ be the
  $\psi_{g+1}$-adic expansion of $F$. By hypothesis, we know that
  \[
    \pi_{g}^*(F)=x^{v_g(F)}\, H_g\, U,\text{ with } U(0,0)\ne 0
  \]
  where $\bar{H}_g=\prod_{Q(\zeta)=0}(y^{q_{g+1}}-\zeta x^{m_{g+1}})$,
  with $Q\in \Ki[Z]$ of degree $N_{g+1}:=N_{g}/q_{g+1}$ having at
  least two distinct roots. In particular, $\bar{H}_g$ is not the
  $N_{g+1}$-power of a polynomial and it follows that
  $\pi_g^*(\psi_{g+1}^{N_{g+1}})$ and $\pi_g^*(F)$ can not have the
  same \edgepoly{}s. We deduce that there is at least one index
  $i<N_{g+1}$ such that $\NP_g(c_i \psi_{g+1}^{i})$ has a point on or
  below $\NP_g(F)$. Consider the $\psi_g$-adic expansions
  $c_i \psi_{g+1}^{i}=\sum_{j} a_{j} \psi_g^j$ and
  $F=\sum_j \alpha_j\psi_g^j$. Thanks to \eqref{defpolygk}, there
  exists at least one index $j$ such that
  $(j,v_g(a_{j} \psi_g^j))\in \NP_g(c_i \psi_{g+1}^i)$. By
  \eqref{defpolygk}, $\NP_g(F)$ is the lower convex hull of
  $(j,v_g(\alpha_{j} \psi_g^j))$, which is by assumption straight of
  slope $-q_{g+1}/m_{g+1}$. It follows that
  \[
    \min_{j} (q_{g+1} v_g(a_j\psi_g^j)+m_{g+1} j)\le \min_j (q_{g+1}
    v_g(\alpha_j\psi_g^j)+m_{g+1} j).
  \]
  Thanks to Definition \ref{def:alternative}, this implies
  $v_{g+1}(c_i \psi_{g+1}^{i})\le v_{g+1}(F)$ which in turns forces
  $(N_{g+1},v_{g+1}(F))\notin \NP_{g+1}(F)$.
\end{proof}


\bibliographystyle{abbrv} {\bibliography{tout}}
\addcontentsline{toc}{section}{References.}

\end{document}